\documentclass[a4paper,10pt]{article}
\usepackage[top=1in, bottom=1in, left=1in, right=1in]{geometry}
\setlength\parindent{0.25in}
\setlength{\columnsep}{.25in}
\pdfminorversion=7
\usepackage{amsmath,amsfonts,amscd,amssymb,amsthm}
\usepackage{fancyhdr}
\usepackage{enumitem}
\usepackage[percent]{overpic}
\usepackage{tikz}
\usetikzlibrary{shapes.misc}
\usepackage{mathrsfs}
\usepackage{ifthen,tabularx,graphicx,multirow}
\usepackage{xargs}
\usepackage[colorinlistoftodos,prependcaption,textsize=tiny]{todonotes}
\usepackage{tcolorbox}
\usepackage{enumerate}
\usepackage{comment}
\usepackage{booktabs}
\usepackage{varwidth}
\usepackage{graphicx}
\usepackage{epstopdf}
\usepackage{mathtools}
\usepackage{setspace}
\usepackage{palatino} 
\setlength{\parskip}{0pt} 
\usepackage{natbib}
\usepackage{bm}
\usepackage[bottom,flushmargin,hang,multiple]{footmisc}

\usepackage[pdfencoding=auto, psdextra]{hyperref}
\usepackage{lscape}
\usepackage{afterpage}
\usepackage{multirow}

\usepackage[capitalise,noabbrev]{cleveref}

\usepackage{algorithmic}
\usepackage{algorithm}


\geometry{margin=1in}

\newcommandx{\at}[2][1=]{\todo[linecolor=red,backgroundcolor=red!25,bordercolor=red,#1]{#2}}
\numberwithin{equation}{section}
\newtheorem{theorem}{Theorem}

\newtheorem{proposition}{Proposition}

\numberwithin{theorem}{section}
\numberwithin{lemma}{section}
\numberwithin{corollary}{section}
\numberwithin{proposition}{section}
\numberwithin{definition}{section}
\numberwithin{example}{section}
\interfootnotelinepenalty=10000

\usepackage{bm} 
\newcommand{\tol}{{\boldsymbol\tau}}
\newcommand {\inp}[2]{\mbox{$\langle #1, #2\rangle$}}
\newcommand{\Koop}{\mathcal{K}}
\newcommand{\KoopM}{\mathbb{K}}
\newcommand{\pp}{[-\pi,\pi]_{\mathrm{per}}}
\newcommand{\C}{\ensuremath{\mathbb{C}}} 
\newcommand{\R}{\ensuremath{\mathbb{R}}}

\def\roff  {\mbox{\boldmath$\varepsilon$}}

\newcommand*{\dd}{{\,\mathrm{d}}}
\newcommand{\Av}{\mathbf{A}}
\newcommand{\Bv}{\mathbf{B}}
\newcommand{\Cv}{\mathbf{C}}

\newcommand{\Gv}{\mathbf{G}}

\newcommand{\Iv}{\mathbf{I}}

\newcommand{\Kv}{\mathbf{K}}
\newcommand{\Lv}{\mathbf{L}}

\newcommand{\Pv}{\mathbf{P}}
\newcommand{\Qv}{\mathbf{Q}}
\newcommand{\Rv}{\mathbf{R}}

\newcommand{\Uv}{\mathbf{U}}
\newcommand{\Vv}{\mathbf{V}}
\newcommand{\Wv}{\mathbf{W}}
\newcommand{\Xv}{\mathbf{X}}
\newcommand{\Yv}{\mathbf{Y}}

\newcommand{\gv}{\mathbf{g}}
\newcommand{\cv}{\mathbf{c}}
\newcommand{\zv}{\mathbf{z}}
\newcommand{\sv}{\mathbf{s}}
\newcommand{\vv}{\mathbf{v}}
\newcommand{\wv}{\mathbf{w}}
\newcommand{\Psiv}{\mathbf{\Psi}}

\newcommand{\pvm}{\mathcal{E}}

\providecommand{\keywords}[1]{\textbf{\textit{Keywords:}} #1}

\title{An Introductory Guide to Koopman Learning}
\author{Matthew Colbrook \thanks{Department of Applied Mathematics \& Theoretical Physics, University of Cambridge, Cambridge, UK. (mjc249@cam.ac.uk)} \and Zlatko Drmač\thanks{Department of Mathematics, University of Zagreb, Zagreb, Croatia. (drmac@math.hr)} \and Andrew Horning\thanks{Department of Mathematical Sciences, Rensselaer Polytechnic Institute, Troy, New York, USA. (hornia3@rpi.edu)}}

\begin{document}

\maketitle

\begin{abstract}
Koopman operators provide a linear framework for data-driven analyses of nonlinear dynamical systems, but their infinite-dimensional nature presents major computational challenges. In this article, we offer an introductory guide to Koopman learning, emphasizing rigorously convergent data-driven methods for forecasting and spectral analysis. We provide a unified account of error control via residuals in both finite- and infinite-dimensional settings, an elementary proof of convergence for generalized Laplace analysis---a variant of filtered power iteration that works for operators with continuous spectra and no spectral gaps---and review state-of-the-art approaches for computing continuous spectra and spectral measures. The goal is to provide both newcomers and experts with a clear, structured overview of reliable data-driven techniques for Koopman spectral analysis.
\end{abstract}

\keywords{Data-driven dynamics, Koopman operator, Dynamic Mode Decomposition, generalized Laplace analysis, spectral measures}

\footnotesize
\renewcommand{\contentsname}{\small\bfseries Contents}
\setcounter{tocdepth}{2}
\tableofcontents 
\normalsize

\section{Introduction}

In this paper, we give an introductory guide to Koopman learning for studying data-driven spectral problems for discrete-time dynamical systems of the form
\begin{equation}
\label{eq:dyn_sys}
x_{n+1}=F(x_n), \qquad n= 0,1,2,\ldots.
\end{equation}
Here, $x\in\mathcal{X}$ denotes the state of the system and the state space $\mathcal{X}$ is a metric space. The function $F: \mathcal{X} \rightarrow \mathcal{X}$, which governs the evolution of the state, is assumed to be continuous. Frequently, though not always, $\mathcal{X}\subset\mathbb{R}^d$ and $F$ is nonlinear.

For many modern systems, the function $F$ is either unknown or too complex to analyze directly. Instead, we rely on observations in the form of \textit{snapshot data}:
\begin{equation}
\label{eq:snapshot_data}
\left\{\left(x^{(m)},y^{(m)}\right)\right\}_{m=1}^M\subset\mathcal{X}\quad \text{such that}\quad y^{(m)}=F(x^{(m)}),\quad m=1,\ldots,M.
\end{equation}
Such data may come from a single long trajectory or from many shorter experiments or simulations. Our aim is to show how this information can be used to study Koopman operators, which provide a powerful linear framework for analyzing nonlinear dynamics. Namely, one studies functions (called ``observables'') $g:\mathcal{X}\rightarrow\mathbb{C}$ and the Koopman operator acts on $g$, yielding another observable $\Koop g$ defined by
\begin{equation}
\label{koop_def}
[\Koop g](x) = g(F(x)).
\end{equation}
The key features of $\Koop$ are that it is always linear on the space of observables and its spectral decompositions encode information about the state-space dynamics. Numerical approximations of $\Koop$ from snapshot data allow nonlinear forecasting, and spectral computations uncover structure in the underlying dynamics. 

Since Koopman operators act on infinite-dimensional spaces of observables, computations require truncation or finite-dimensional approximation. Our focus is on methods whose truncations converge rigorously to the true properties of the operator. This is an essential requirement, as non-convergent approximations often yield misleading results unrelated to the underlying dynamical system.

The literature on Koopman operators is extensive, and it is easy for a newcomer to get lost. Our aim is not to catalogue every development in Koopman operator computations (which would run into several hundred pages) but to highlight key advances and unifying principles. In particular, this review focuses on:
\begin{itemize}
\item A comprehensive, unified discussion of the role of residuals for error control in both finite- and infinite-dimensional computations;
\item An elementary proof of the convergence of generalized Laplace analysis for computing Koopman modes when the Koopman operator is ``spectral'';
\item An extensive review and comparison of state-of-the-art methods for computing and handling continuous spectra (typically associated with chaotic systems) and spectral measures.
\end{itemize}
These contributions complement existing reviews. The survey by Mezić \cite{mezic2013analysis} and the more recent review by Schmid \cite{schmid2022dynamic} primarily focus on developments in fluid dynamics applications. While Koopman operator theory and DMD techniques were first applied to fluid problems, their broader applicability has now been firmly established across diverse fields. For instance, applications in control theory are discussed by Otto and Rowley \cite{otto2021koopman}. An early overview of ``Applied Koopmanism'' was presented by Budi{\v{s}}i{\'c}, Mohr and Mezi{\'c} \cite{budivsic2012applied}, while Brunton, Budi{\v{s}}i{\'c}, Kaiser and Kutz \cite{brunton2021modern} provided a wide-ranging introduction that emphasizes connections to other areas. Most recently, Colbrook \cite{colbrook2023multiverse} offered a comprehensive review of DMD methods, highlighting their interplay with the spectral properties of Koopman operators and related numerical computations.

\subsection{The setting}

We assume throughout that the metric space \((\mathcal{X}, d_{\mathcal{X}})\) is equipped with a Borel \(\sigma\)-finite measure \(\omega\), and that the Borel sets of \(\mathcal{X}\) are countably generated. This framework allows us to work in the separable Hilbert space \(L^2(\mathcal{X}, \omega)\), which is the most common setting for Koopman operators. The choice of measure \(\omega\) is typically application-dependent: it may correspond to a natural physical measure (e.g., on the Lorenz attractor) or be selected to assign different weights to regions of 
\(\mathcal{X}\).

To ensure that a pointwise definition in \cref{koop_def} is well-defined for an operator acting on equivalence classes of functions in $L^2(\mathcal{X}, \omega)$, we must assume that $F$ is non-singular with respect to $\omega$. The \textit{pushforward measure}, $F\#\omega$, is defined on Borel subsets $S\subset\mathcal{X}$ by
$$
F\#\omega(S)=\omega(F^{-1}(S)),\quad\text{where}\quad F^{-1}(S)=\{x\in\mathcal{X}:F(x)\in S\},
$$
and $F$ is \textit{non-singular} with respect to $\omega$ if $F\#\omega(S)=0$ whenever $\omega(S)=0$.

By \cite{Singh-Compact-QuasiN-1974}, if $(\mathcal{X}, \omega)$ is a non-atomic measure space, $\Koop$ is never compact. Consequently, the classical theory for computing spectra of compact operators does not apply \cite{babuvska1991eigenvalue,osborn1975spectral}. The spectral analysis of non-compact operators is considerably more delicate; see \cite{colbrook3,colbrook2019compute,lewin2009spectral} for a discussion of these challenges. We shall assume throughout this paper that $\Koop$ is bounded but not compact.\footnote{Of course, the Koopman operator depends on the space of observables. For compact Koopman operators on the Hardy space $\mathcal{H}^2(\mathbb{D})$ of analytic functions on the unit disk with square-summable power-series coefficients, see \cite{Shapiro-Comp_ops-1993}.}

\section{Matrix approximations of the operator}
\label{sec:Mat_compression_EDMD}

\subsection{Finite sections}

To approximate the infinite-dimensional operator $\Koop$ with finitely many computational resources and data, the simplest approach is to project onto a sequence of finite-dimensional subspaces. 
One first chooses a \textit{dictionary} $\mathcal{D}=\{\psi_1,\ldots,\psi_{N}\}$, i.e., a list of observables in the space $L^2(\mathcal{X}, \omega)$. The functions $\psi_n$ need not be normalized or orthogonal, but we assume they are linearly independent. Denote the dictionary span by $V_{N}=\mathrm{span}\{\psi_1,\ldots,\psi_{N}\}$ and the orthogonal projection from $L^2(\mathcal{X}, \omega)$ onto $V_{N}$ by $\mathcal{P}_{V_{N}}$. The goal is to construct increasingly accurate approximations as $N\rightarrow\infty$.

The ``finite section'' (or compression) approximation of $\Koop$ is $\Koop_N=\mathcal{P}_{V_{N}}\Koop\mathcal{P}_{V_{N}}^* : V_N\rightarrow V_N$, which can be extended to all of $L^2(\mathcal{X}, \omega)$ as $\Koop_N \mathcal{P}_{V_{N}}$. This approach has a long history outside Koopman theory. In the Koopman setting, it underlies the Extended Dynamic Mode Decomposition (EDMD) \cite{Williams2015}.

We seek a matrix representation $\KoopM_N$ of $\mathcal{P}_{V_{N}}\Koop\mathcal{P}_{V_{N}}^*$ so that
\begin{equation}\label{eq:Ufi(x)}
	[\Koop \psi_i](x)=\psi_i(F(x))=\sum_{j=1}^N [\KoopM_N]_{ji} \psi_j(x) + \rho_i(x),\;\;i=1,\ldots, N, \;\;x\in\mathcal{X},
\end{equation}
where $\rho_i$ is the residual.
The projection $\mathcal{P}_{V_{N}}\Koop\psi_i$ is obtained by choosing the $i$th column $\KoopM_N(:,i)$ so that $\int_{\mathcal{X}} |\rho_i(x)|^2\dd \omega(x)$ is minimized. Equivalently, the residual is orthogonal to $V_N$, which can be expressed as
\begin{equation}\label{eq:ort-resid}
\Koop_N\psi_i - \sum_{j=1}^N [\KoopM_N]_{ji}\psi_j \perp \psi_\ell \Longleftrightarrow 
\sum_{j=1}^N \inp{\psi_j}{\psi_\ell} [\KoopM_N]_{ji} = \inp{\Koop_N\psi_i}{\psi_\ell},\;\;\ell=1,\ldots, N.
\end{equation}
This defines a linear system with Gram matrix $[G_N]_{\ell j}=\inp{\psi_j}{\psi_\ell}$.
This matrix is Hermitian and positive definite, ensuring that $\KoopM_N$ is uniquely determined.

The quality of this approximation is assessed through the convergence of $\Koop_N \mathcal{P}_{V_{N}}$ to $\Koop$. Since $\Koop$ is not compact,
there is no hope of uniform (in norm) convergence $\Koop_N \mathcal{P}_{V_{N}}\longrightarrow\Koop$ as $N\rightarrow\infty$ (recall that a norm-limit of finite-rank operators is compact). Instead, one only obtains strong convergence: for every fixed $g\in L^2(\mathcal{X},\omega)$,
$$
\lim_{N\rightarrow\infty}\Koop_N \mathcal{P}_{V_{N}}g=\Koop g.
$$
Unfortunately, as we discuss below, this does not imply convergence of the eigenvalues of $\KoopM_N$ to the spectrum of $\Koop$. However, we will still be able to obtain useful approximations of spectral properties of $\Koop$ from data-driven approximations of $\KoopM_N$.

\subsection{Data-driven approximations (a.k.a. EDMD)}

In our data-driven setting, direct access to the residual $\rho_i$ or to the inner products in \cref{eq:ort-resid} is infeasible. Instead, we minimize $\rho_i$ over the available snapshot data in \cref{eq:snapshot_data}. For $i=1,\ldots, N$, we minimize the weighted least-squares residual
\begin{equation}\label{eq:LS:Upi}
	\sum_{m=1}^{M}w_m |\rho_i(x^{(m)})|^2= 
	\sum_{m=1}^{M}w_m \bigg| \underbrace{\sum_{j=1}^N [\Kv]_{ji} \psi_j(x^{(m)}) - \psi_i(F(x^{(m)}))}_{\rho_i(x^{(m)})}\bigg|^2,
\end{equation}
where $w_m$ are quadrature weights such that
$$
\sum_{m=1}^{M}w_m |\rho_i(x^{(m)})|^2\approx \int_{\mathcal{X}} |\rho_i(x)|^2\dd \omega(x).
$$
We use the matrix $\Kv$ to distinguish from the matrix $\KoopM_N$ that corresponds to the limit $M\rightarrow\infty$, assuming the quadrature rule converges.

Let $\Wv=\mathrm{diag}(w_1,\ldots,w_{M})$, $\Psiv(x^{(m)})=(\psi_1(x^{(m)})\; \cdots \;\psi_{N}(x^{(m)}))$, $y^{(m)}=F(x^{(m)})$, and define the data matrices
\begin{equation}\label{eq:psidef}
		\Psiv_X=\begin{pmatrix}
			\Psiv(x^{(1)}) \\
			\vdots              \\
			\Psiv(x^{(M)})
		\end{pmatrix}\in\mathbb{C}^{M\times N},\quad
		\Psiv_Y=\begin{pmatrix}
			\Psiv(y^{(1)}) \\
			\vdots              \\
			\Psiv(y^{(M)})
		\end{pmatrix}\in\mathbb{C}^{M\times N}.
\end{equation}
Minimizing the residuals in \cref{eq:LS:Upi} for all $i=1,\ldots,N$ yields the weighted least-squares problem
\begin{equation}\label{eq:EDMD_opt_prob2}
	\min_{\Kv\in\mathbb{C}^{N\times N}} \sum_{m=1}^{M} w_m\left\|\Psiv(y^{(m)})-\Psiv(x^{(m)})\Kv\right\|^2_{\ell^2} = \min_{\Kv\in\mathbb{C}^{N\times N}} \left\|\Wv^{1/2}\Psiv_Y-\Wv^{1/2}\Psiv_X \Kv\right\|_{\mathrm{F}}^2,
\end{equation}
where $\|\cdot\|_{\mathrm{F}}$ denotes the Frobenius norm. Its solution is
\begin{equation}\label{eq:KN-LS-solution}
\Kv = (\Wv^{1/2} \Psiv_X)^\dagger (\Wv^{1/2} \Psiv_Y)= (\Psiv_X^* \Wv \Psiv_X)^{-1}\Psiv_X^* \Wv \Psiv_Y ,
\end{equation}
where in the first equality $(\Wv^{1/2} \Psiv_X)^\dagger$ is the Moore--Penrose pseudoinverse. For any $g=\sum_{i=1}^N \mathbf{g}_i \psi_i\in V_{N}$, we have 
\begin{gather*}
f(x) = [\Koop g](x) = \sum_{i=1}^N \mathbf{g}_i \left[ \sum_{j=1}^N [\Kv]_{ji} \psi_j(x) + \rho_i(x) \right]  = \sum_{j=1}^N \psi_j(x) \mathbf{f}_j + \sum_{i=1}^N \mathbf{g}_i\rho_i(x),\\
\mbox{where}\;\; \mathbf{f}_j=\sum_{i=1}^N [\Kv]_{ji}\mathbf{g}_i,\;\;\mbox{i.e.,}\;\;
\mathbf{f}=\begin{pmatrix}\mathbf{f}_1 \cr \vdots\cr \mathbf{f}_N \end{pmatrix} = \Kv \begin{pmatrix}\mathbf{g}_1 \cr \vdots\cr \mathbf{g}_N\end{pmatrix}=\Kv\mathbf{g}.
\end{gather*}
The matrix $\Kv$ is a data-driven approximation of the matrix representation of the finite section $\mathcal{P}_{V_{N}}\Koop\mathcal{P}_{V_{N}}^*$, with respect to the dictionary $\mathcal{D}=\{\psi_1,\ldots,\psi_{N}\}$.

At this stage, rigorous analysis requires an additional limiting process. In the data limit 
$M\rightarrow\infty$ with suitable quadrature weights, the algebraic least-squares projection converges to the Hilbert space projection. Only then can the analysis proceed in the Hilbert space setting as 
$N\rightarrow\infty$, as described above.

When $N > M$, the solution of \cref{eq:EDMD_opt_prob2} is not unique. For any matrix $\Bv$ such that $\Psiv_X \Bv=0$, the matrix $\Kv+\Bv$ is also a minimizer. The particular choice $\Kv$ via the pseudoinverse selects the one of smallest norm among all minimizers (this additional property is built into the definition of the generalized inverse).
Even when $\Psiv_X$ has full column rank, it may be ill-conditioned. In practice, least-squares solvers employ rank-revealing decompositions (such as pivoted QR or SVD). In the presence of numerical rank deficiency, the returned solution—being non-unique—may depend on the chosen algorithm and its implementation; see \cite{Drmac-Mezic-Mohr-InfGen-2021}.

\subsection{Further compressions}

When $\Psiv_X$ has rank $r<N$, we can reduce the eigenvalue problem of $\Kv$ to an $r$--dimensional one by using the Rayleigh quotient with respect to the range of $\Psiv_X^\dagger$. If $\Psiv_X$ has full row rank (so $r=M<N$), we obtain $(\Psiv_X^\dagger)^\dagger \Kv\Psiv_X^\dagger=\Psiv_Y \Psiv_X^\dagger$.  Because of possible ill-conditioning, this reduction is delicate: the numerical rank and the range of $\Psiv_X$ are best determined using the SVD, which provides an optimal low-rank approximation within a given tolerance. Moreover, the Rayleigh quotient should be computed with respect to an orthonormal basis.

To this end, let the economy size SVD of $\Psiv_X$ be 
\begin{equation}\label{eq:SVD-psix}
\Psiv_X = U \Sigma V^*,\;\;\Sigma=\mathrm{diag}(\sigma_1,\ldots,\sigma_r),\, U\in\C^{M\times r},\;
V\in\C^{N\times r},\;U^*U=V^*V=I_r.
\end{equation}
Using $\Psiv_X^\dagger = V\Sigma^{-1}U^*$, the Rayleigh quotient $\Kv_r = V^* \Kv V$ is the $r\times r$ matrix
\begin{equation}\label{eq:RQ0}
\Kv_r = V^* \Kv V = V^*(V\Sigma^{-1}U^*)\Psiv_Y V = \Sigma^{-1}U^*\Psiv_Y V.
\end{equation}
It can be checked that $\Kv V=V\Kv_r$.
Assuming $\Kv_r$ is diagonalizable, its $r$ eigenpairs, $\Kv_r \sv_i = \lambda_i \sv_i$,  are lifted using $V$ into eigenpairs of $\Kv$:
$$
\Kv (V\sv_i) = \Psiv_X^\dagger \Psiv_Y V \sv_i = V \Sigma^{-1}U^*\Psiv_Y V \sv_i = V \Kv_r \sv_i=\lambda_i (V \sv_i).
$$ 
The approximate eigenfunctions of $\Koop$ are
$$
\phi_i(x)   = \sum_{j=1}^N \psi_j(x) (V\sv_i)_j,\quad i=1,\ldots, r.
$$
Below, when discussing ResDMD, we shall show how to compute the residual $\|\Koop \phi_i-\lambda_i\phi_i\|$ associated with such approximations.

The rank revealing SVD in \cref{eq:SVD-psix} assumes exact computation. In finite precision computation, determining the numerical rank is a delicate issue.
Instead, a numerical rank is determined 
as
\begin{equation}\label{eq:truncation}
	k = \max\{ i\; :\; \sigma_i > \sigma_1 \tol\},
\end{equation} 
where the tolerance level is usually a multiple of the round-off unit $\roff$, e.g. $\tol=N\roff$.
By the Eckart--Young--Mirsky theorem, the closest matrix of rank of at most $k<r$ to $\Psiv_X$ is at the distance $\sigma_{k+1}$ (in the spectral norm). The above construction can be repeated with $k$ instead of $r$, with  $\Psiv_X \approx U_k\Sigma_k V_k^*$, $U_k=U(:,1:k)$, $V_k=V(:,1:k)$, $\Sigma_k=\Sigma(1:k,1:k)$.

Finally, the finite-dimensional compression $\KoopM_N$ and its approximation $\Kv$ may fail to be diagonalizable (even if $\Koop$ is unitary) or may possess highly ill-conditioned eigenvectors. An alternative approach---constructing a (triangular) Schur form of the Koopman operator and using a flag of nearly invariant subspaces instead of eigenfunctions---is proposed in \cite{Drmac-Mezic-Koopman-Schur-2024}.

\subsection{Transposes and the DMD connection}
There is a connection between EDMD (the finite section method) and DMD (the most widely used algorithm associated with Koopman operators). We have
\begin{equation}\label{eq:KNT}
\begin{pmatrix} \mathcal{P}_{V_{N}}\Koop\psi_1(x) \cr \vdots \cr \mathcal{P}_{V_{N}}\Koop\psi_N(x)\end{pmatrix} \simeq
\Kv^\top \begin{pmatrix} \psi_1(x) \cr \vdots \cr \psi_N(x)\end{pmatrix},\;\;\mbox{i.e.,}\;\;
\begin{pmatrix} \psi_1(y^{(m)}) \cr \vdots \cr \psi_N(y^{(m)})\end{pmatrix} \simeq
\Kv^\top \begin{pmatrix} \psi_1(x^{(m)}) \cr \vdots \cr \psi_N(x^{(m)})\end{pmatrix},\;m=1,\ldots,M.
\end{equation}
Interpret $\Psiv(x^{(m)})^\top=(\psi_1(x^{(m)}), \ldots, \psi_{N}(x^{(m)}))^\top$ as data snapshots with the dictionary functions $\psi_i$'s as observables, and store them column--wise to form
$$
		\Psiv_X^\top=\begin{pmatrix}
			\Psiv(x^{(1)})^\top &\cdots&
			\Psiv(x^{(M)})^\top
		\end{pmatrix}\in\mathbb{C}^{N\times M},\quad
		\Psiv_Y^\top=\begin{pmatrix}
			\Psiv(y^{(1)})^\top &
			\cdots              &
			\Psiv(y^{(M)})^\top
		\end{pmatrix}\in\mathbb{C}^{N\times M}.
$$
In \cref{eq:KNT}, the columns $\Xv=\Psiv_X^\top$ are ``pushed forward'' into $\Yv=\Psiv_Y^\top$ by a linear mapping. If $\mathcal{X}=\mathbb{R}^d$, $N=d$, and $\psi_i(x)=e_i^\top x$, then $\Psiv(x^{(m)})^\top=x^{(m)}$ becomes the full state observable. Several trajectories can be arranged in the data matrices $\Xv$ and $\Yv$.

Given a collection of snapshots generated by nonlinear dynamics, one can postulate a linear relation $\Yv \approx \mathbb{A} \Xv$ without explicitly invoking the Koopman operator. This idea is based on local tangential approximation and sampling over small intervals, a common technique in the numerical solution of differential equations. Schmid and Sesterhenn \cite{Schmid-Sesterhenn-DMD-2008,Schmid-DMD-2010} developed this approach into the Dynamic Mode Decomposition (DMD)—a powerful computational tool in fluid dynamics, well suited both for data-driven applications and for analyses based on numerical simulations.

Hence, we can think of each column of $\Yv$ as a result of a linear operator action on the corresponding column of $\Xv$, and we can try to find a matrix $\mathbb{A}$ such that $\Yv-\mathbb{A} \Xv$ is small. Since $\Xv=\Psiv_X^\top$, $\Yv=\Psiv_Y^\top$,  the optimal $\mathbb{A}$ minimizes $\|\Xv^\top\mathbb{A}^\top-\Yv^\top\|_{\mathrm{F}}=\|\Psiv_X \mathbb{A}^\top - \Psiv_Y\|_{\mathrm{F}}$, which means that $\mathbb{A} = \Kv^\top= (\Psiv_X^\dagger\Psiv_Y)^\top= ((\Xv^\top)^\dagger \Yv^\top)^\top=\Yv\Xv^\dagger$, where $\Kv$ is defined in \cref{eq:EDMD_opt_prob2,eq:KN-LS-solution}, with $W=\tfrac{1}{M}I_N$. In typical applications, the matrices $\Xv$ and $\Yv$ are tall and skinny with $N\gg M$ and, as already discussed, the solution is a linear manifold and the particular choice $\mathbb{A}=\Yv\Xv^\dagger=(\Psiv_Y)^\top(\Psiv_X)^{\top\dagger}$ is selected to have smallest norm.

\subsubsection{Rayleigh--Ritz extraction}

The goal of DMD is to decompose the data snapshots in terms of eigenvectors of $\mathbb{A}$. Since in this setting $\mathbb{A}$ is implicitly defined only on the range of $\Xv$, it is natural to use a Rayleigh--Ritz extraction. An orthonormal basis for the range of $\Xv$ is available from \cref{eq:SVD-psix} as $V^{*\top}$ and the corresponding Rayleigh quotient is 
\begin{equation}\label{Ar=KrT}
\mathbb{A}_r = V^\top\mathbb{A} V^{*\top} = (V^*\mathbb{A}^\top V)^\top = \Kv_r^\top.
\end{equation}
The matrix $\mathbb{A}_r$ has the same eigenvalues as $\Kv_r$. The rank revealing SVD of $\Xv$ is $\Xv=U_x\Sigma V_x^*$, where (using \cref{eq:SVD-psix}) $U_x=V^{*\top}$, $V_x^*=U^\top$. From the definition of $\mathbb{A}$, $\mathbb{A}\Xv = \Yv \Pv_{\Xv^\top}$, i.e., $\mathbb{A} U_x\Sigma V_x^* = \Yv V_x V_x^*$ and then $\mathbb{A} U_x\Sigma = \Yv V_x$. For all $k=1,\ldots,r$, with $U_{xk}=U_x(:,1:k)$, $V_{xk}=V_x(:,1:k)$, $\Sigma_k=\Sigma(1:k,1:k)$, the Rayleigh quotient $A_k=U_{xk}^*\mathbb{A} U_{xk}$ is computed from
\begin{equation}\label{eq:RQ-Ak}
\mathbb{A} U_{xk} = \Yv V_{xk}\Sigma_k^{-1} \; \mbox{as}\; A_k = U_{xk}^*\mathbb{A} U_{xk}=U_{xk}^*\Yv V_{xk}\Sigma_k^{-1}.
\end{equation}
If $A_k b_i = \lambda_i b_i$, $\|b_i\|_{\ell^2}=1$, $i=1,\ldots, k$, then the approximate eigenvectors of $\mathbb{A}$ 
are $\zv_i = U_{xk}b_i = V^{*\top}(:,1:k)b_i$.

\subsubsection{Finite-dimensional residuals for the compression of $\mathbb{A}$}

It is important to understand that $U_{xk}$ (or $\Xv$) does not, in general, span an $\mathbb{A}$-invariant subspace. Using all computed pairs $(\lambda_i,U_{xk}b_i)$ in the modal analysis of the data is not justified---a common mistake in the published DMD literature and its applications. In fact,
$\mathbb{A} (U_{xk}b_i) = \lambda_i (U_{xk}b_i) + (I_N - U_{xk}U_{xk}^*)\mathbb{A} U_{xk}b_i,\;\;i=1,\ldots, k,$ so that each eigenpair $(\lambda_i,\zv_i)$ has the (computable) residual
\begin{equation}\label{eq:dd-residual}
r_k(i) = \| \mathbb{A}\zv_i - \lambda_i\zv_i \|_{\ell^2} = 
\| \Yv V_{xk}\Sigma_k^{-1} \zv_i - \lambda_i\zv_i\|_{\ell^2}, \;\;\; i=1,\ldots, k.
\end{equation}
Thus, every approximate eigenpair $(\lambda_i,\zv_i)$ is accompanied by a residual (associated with the larger, but finite matrix $\mathbb{A}$---we discuss residuals associated with $\Koop$ in the next section), which can be used to identify and retain only the reliable approximations. This procedure is summarized in \cref{zd:ALG:DMD}.

Extracting spectral information from the range of $\Xv$ is most effective when the largest possible subspace is used. Why, then, do we truncate the singular values and restrict to a smaller subspace? A key reason is that the SVD is computed in finite precision, and the smallest singular values are typically contaminated by large numerical errors. As a result, the data-driven formula~\eqref{eq:RQ-Ak} becomes unreliable.

\begin{algorithm}[t]
\caption{$(Z_k, \Lambda_k, r_k, [C_k], [Z_k^{(ex)}])=\mathrm{DMD}(\Xv,\Yv; \tol)$\\
A useful preprocessing step \cite{Drmac-Mezic-Mohr-EnhancedDMD-2018,Drmac-2020-koopman-book-chapter,Drmac-DMD-TOMS-2024,Drmac-Herm-DMD-TOMS-2024} is to scale the data: multiplying both $\Xv$ and $\Yv$ on the right by a diagonal matrix that normalizes the columns of $\Xv$ is permissible operation. This improves the condition number of $\Xv$, yielding a more accurate SVD and thereby allowing more singular values to satisfy \cref{eq:truncation}.}
\label{zd:ALG:DMD}
\begin{algorithmic}[1]
	\REQUIRE \  
	$\Xv=(x^{(1)},\ldots,x^{(M)}), \Yv=(y^{(1)},\ldots,y^{(M)})\in {\R}^{N\times M}$ that define a sequence of snapshots
	pairs $(x^{(i)},y^{(i)})$ (with $M \ll N$), tolerance $\tol$ for the truncation (\ref{eq:truncation}).
	\STATE $D_{\Xv}=\mathrm{diag}(\|\Xv(:,1)\|_{\ell^2},\|\Xv(:,2)\|_{\ell^2},\ldots,\|\Xv(:,M)\|_{\ell^2})$; $\Xv_c= \Xv D_{\Xv}^{\dagger}$; $\Yv_c =\Yv D_{\Xv}^{\dagger}$.
	\STATE $[U,\Sigma, V]=\texttt{svd}(\Xv_c)$ ; \hfill\COMMENT{\emph{Thin SVD: $\Xv_c = U \Sigma V^*$, $U\in {\C}^{N\times M}$, $\Sigma=\mathrm{diag}(\sigma_1,\ldots,\sigma_M)$}}
	\STATE Determine numerical rank $k$, using (\ref{eq:truncation}) with the threshold $\tol$.
	\STATE Set $U_k=U(:,1:k)$, $V_k=V(:,1:k)$, $\Sigma_k=\Sigma(1:k,1:k)$ 	
	\STATE ${C}_k = \Yv_c (V_k\Sigma_k^{-1})$; \hfill\COMMENT{\emph{Data driven formula (\ref{eq:RQ-Ak}) for $\mathbb{A} U_k$. [optional output]}}
	\STATE $A_k = U_k^* C_k$ \COMMENT{\emph{$A_k = U_k^* \mathbb{A} U_k$ is the Rayleigh quotient (\ref{eq:RQ-Ak}).}}
	\STATE $[B_k, \Lambda_k] = \texttt{eig}(A_k)$ \hfill\COMMENT{$\Lambda_k=\mathrm{diag}(\lambda_i)_{i=1}^k$; $A_k B_k(:,i)=\lambda_i B_k(:,i)$; $\|B_k(:,i)\|_{\ell^2}=1$}
	\STATE $Z_k = U_k B_k$ \hfill\COMMENT{\emph{The Ritz vectors}}
	\STATE $Z_k^{(ex)} = C_k B_k$ \hfill\COMMENT{\emph{Optional: Unscaled Exact DMD vectors \cite[\S 2.2--2.3]{tu-rowley-dmd-theory-appl-2014}, \cite[\S 3.1]{Drmac-DMD-TOMS-2024}.}}
	\STATE $r_k(i) = \|C_k B_k(:,i) - \lambda_i Z_k(:,i)\|_{\ell^2}$, $i=1,\ldots, k$. \hfill\COMMENT{\emph{The residuals (\ref{eq:dd-residual}).}}
	\ENSURE $Z_k$, $\Lambda_k$, $r_k$, $[C_k]$, $[Z_k^{(ex)}]$.
\end{algorithmic}
\end{algorithm}

\subsubsection{Spatio--temporal representation}

A common task of the DMD analysis is to compute a spectral spatio--temporal representation of the snapshots:  $x^{(m)}\approx  \sum_{j=1}^\ell \zv_{\varsigma_j} \alpha_j \lambda_{\varsigma_j}^{m-1}$, $m=1,\ldots , M$, i.e.
	\begin{equation}\label{eq:f_i-reconstruct-ell}
	      \Xv \approx \begin{pmatrix} \zv_{\varsigma_1} & \zv_{\varsigma_2} & \ldots & \zv_{\varsigma_\ell} \end{pmatrix} \begin{pmatrix} 
		{\alpha}_1 &  &  & \cr
		& {\alpha}_2 &  &   \cr 
		&  & \ddots &        \cr 
		&        &  & {\alpha}_\ell\end{pmatrix}
	\begin{pmatrix} 
		1 & \lambda_{\varsigma_1} & \ldots & \lambda_{\varsigma_1}^{M-1} \cr
		1 & \lambda_{\varsigma_2} & \ldots & \lambda_{\varsigma_2}^{M-1} \cr
		\vdots & \vdots & \cdots & \vdots \cr
		1 & \lambda_{\varsigma_\ell} & \ldots & \lambda_{\varsigma_\ell}^{M-1} \cr
	\end{pmatrix} .
	\end{equation}
For some suitable selection of the modes $\zv_{\varsigma_j}$ and weights 
$\omega_m\geq 0$, the reconstruction coefficients are selected by solving the least squares problem
\begin{equation}\label{eq:rec-error-min}
		\min_{\alpha_j}\sum_{m=1}^{M} \omega_i^2 \left\| x^{(m)} - \sum_{j=1}^\ell \zv_{\varsigma_j} \alpha_j \lambda_{\varsigma_j}^{m-1}\right\|_{\ell^2}^2. 
	\end{equation}	
This is a structured least-squares problem; see \cite{Jovanovic-Schmid-SPDMD:2014,SPDMD-Software,Drmac-Mezic-Mohr-LS-Khatri-Rao-2020} for further details. If the goal is, for example, forecasting, the weights $\omega_m$ can be chosen to emphasize the most recent snapshots.

\subsection{Example}
We illustrate the DMD algorithm on a standard benchmark---the two-dimensional linearized Navier--Stokes equation for plane Poiseuille flow.  
The data are generated as in \cite[\S IV.A.]{Jovanovic-Schmid-SPDMD:2014} and the Orr--Sommerfeld equation for the wall-normal velocity fluctuations is solved using the accompanying Matlab software \cite{SPDMD-Software}. 
The discretized Orr--Sommerfeld operator is denoted by $\mathbf{\Omega}$, and the 
simulation data are obtained by $x_{n+1}=e^{\Delta t\mathbf{\Omega}}x_n$, corresponding to a linear dynamical system.

The key information from the DMD computation is shown in \cref{DMD_eigs_residuals}. 
The truncation of the SVD of $\Xv$ is at $k=26$ and the $26$ computed Ritz values of $e^{\Delta t\mathbf{\Omega}}$
(the DMD eigenvalues) are shown in the middle panel.
The spectral information extracted from the data is $(Z_k, \Lambda_k)$. The right panel shows that not all computed Ritz pairs $(\lambda_i,\zv_i)$ have small residuals, and some are considerably better than the others.

\begin{figure}[t]
\centering
\includegraphics[width=0.32\linewidth]{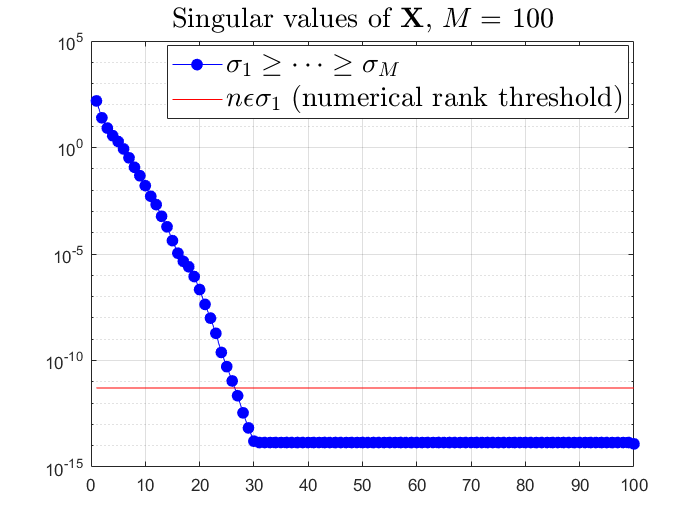}\hfill
\includegraphics[width=0.32\linewidth]{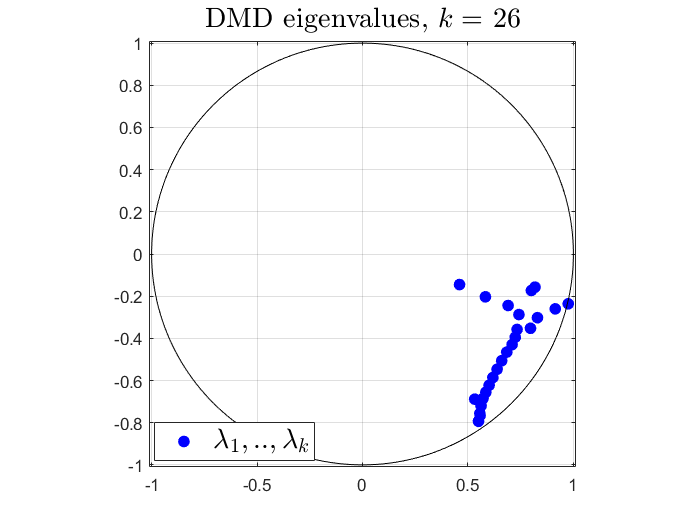}\hfill
\includegraphics[width=0.32\linewidth]{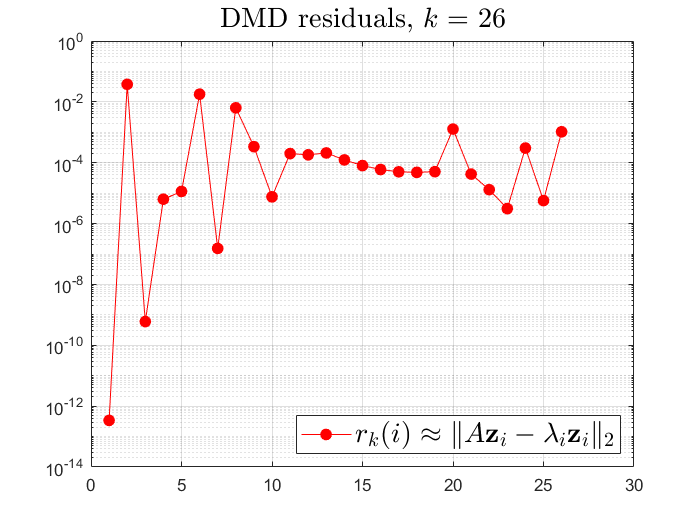}
\caption{\textit{Left:} The singular values of $\Xv$. After truncation, $k=26$ singular values are used and $\Xv \approx U_k\Sigma_k V_k^*$. \textit{Middle:} The eigenvalues computed by the DMD algorithm. \textit{Right:} The DMD residuals.\label{DMD_eigs_residuals}} 
\end{figure}

Since the matrix $\mathbb{A}$ that generated the data is known in this example, we can assess how well the residual reflects the quality of the output. Let the eigenvalues of $\mathbf{\Omega}$ be $\omega_1, \ldots, \omega_n$; then the
eigenvalues of $\mathbb{A}=e^{\Delta t\mathbf{\Omega}}$ are $e^{\Delta t\omega_j}$, $j=1,\ldots, n$. \cref{fig:CHF_eigs_labeled} compares the DMD eigenvalues and the eigenvalues of $e^{\Delta t\mathbf{\Omega}}$.

\begin{figure}[t]
\centering
\includegraphics[width=0.45\linewidth]{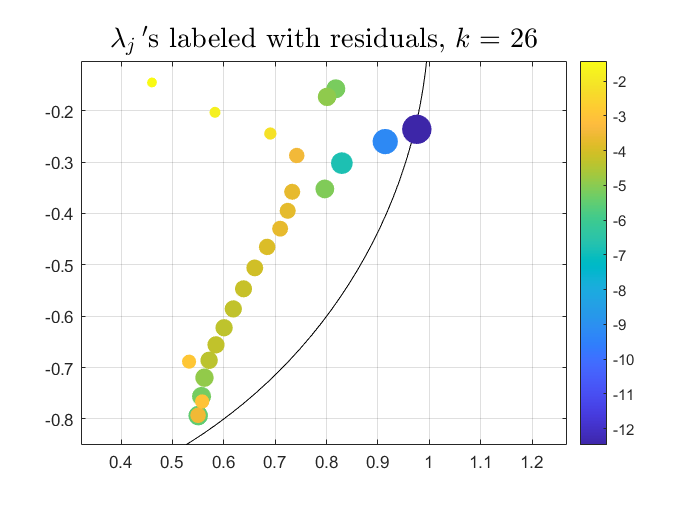}\hfill
\includegraphics[width=0.45\linewidth]{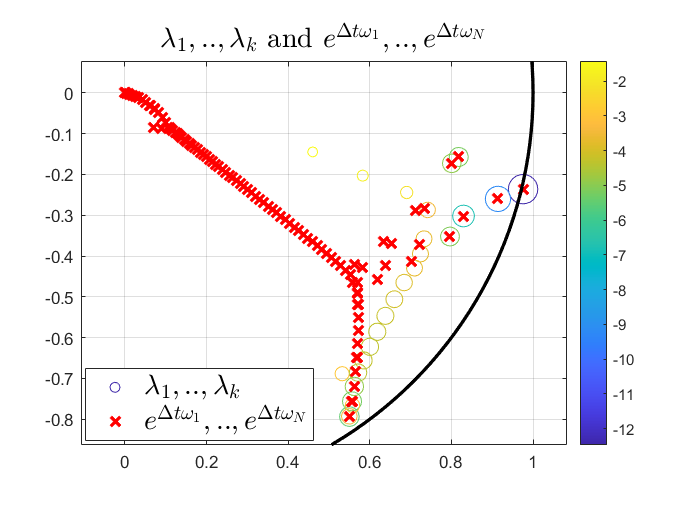}
\caption{\textit{Left:} DMD eigenvalues with corresponding residuals (see \cref{DMD_eigs_residuals}). Larger markers indicate higher accuracy, i.e., smaller residual (colorbar indicates $\log_{10}$ of residuals). \textit{Right:} Comparison of the computed Ritz values (DMD eigenvalues) with explicitly computed eigenvalues of $e^{\Delta t\mathbf{\Omega}}$. \label{fig:CHF_eigs_labeled}} 
\end{figure}

In the next stage of the DMD analysis, the objectives are to reveal the latent structure of the data and to obtain forecasting capability. The coefficients $\alpha_j$ in \cref{eq:f_i-reconstruct-ell} are often computed as $(\alpha_j)_{j=1}^k = Z_k^\dagger x^{(1)}$, after which the remaining snapshots are approximated as $x^{(m)} \approx \sum_{j=1}^k \zv_{j} \alpha_j \lambda_{j}^{m-1}$, tacitly assuming that $\mathbb{A} \zv_j \approx \lambda_j \zv_j$. This approach often performs well, especially for data arising from linearizations. However, in more challenging cases—such as strongly nonlinear dynamics, noisy data, or large residuals—it can produce large reconstruction errors for some snapshots \cite{Drmac-Mezic-Koopman-Schur-2024}. In general, it is safer to solve the least-squares problem in \cref{eq:rec-error-min} explicitly with $\ell = r$. Nevertheless, reconstructing with all (i.e., too many) computed modes is usually not optimal for uncovering latent structure.

\Cref{fig:CHF_reconstr} shows the reconstruction coefficients, illustrating that not all DMD modes contribute equally to representing the data. The goal is to achieve accurate reconstruction with $\ell$ as small as possible. Which modes should be used? This question was addressed in \cite{Jovanovic-Schmid-SPDMD:2014}, leading to the sparsity-promoting DMD (DMDSP) algorithm. Although we do not go into detail here, the basic idea is to add a regularization term to the least-squares objective corresponding to the $\ell^1$-norm of the coefficient vector $(\alpha_j)$. \Cref{fig:CHF_DMDSPreconstr} shows the results of such an optimization process. The main advantage of this approach (automated mode selection via black-box optimization) is also, in a sense, its main drawback: the selected modes lack an intuitive connection to the underlying physics and the Koopman operator framework.

\begin{figure}[t]
\centering
\includegraphics[width=0.45\linewidth]{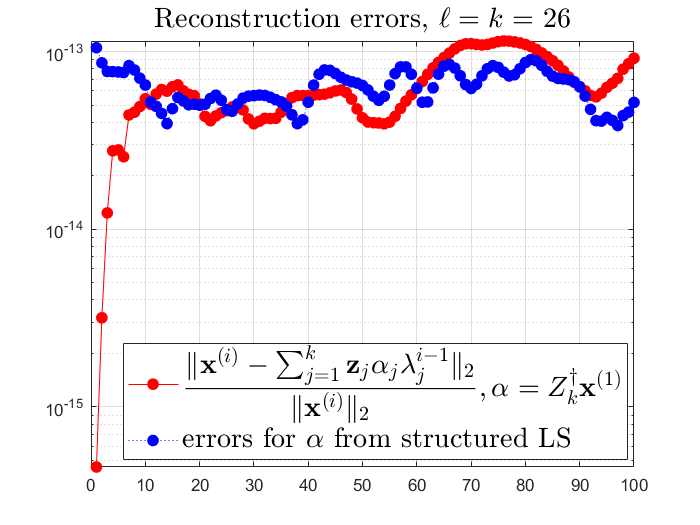}\hfill
\includegraphics[width=0.45\linewidth]{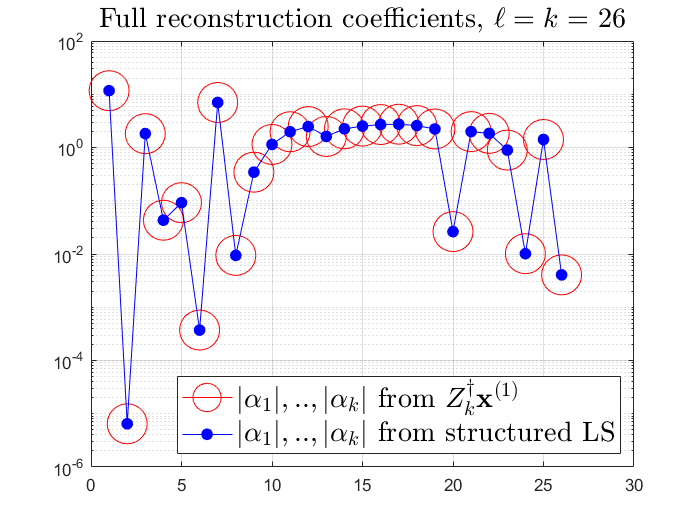}
\caption{\textit{Left:} Reconstruction errors when all computed modes ($\ell=k$) are used in \cref{eq:f_i-reconstruct-ell}. Both methods for computing $\alpha_j$ or solving the structured least squares problem in \cref{eq:rec-error-min} perform well. \textit{Right:} The moduli of the coefficients $\alpha_1,\ldots,\alpha_k$, computed by the two methods. The maximal relative difference between the two sets of values is $\mathcal{O}(10^{-8})$.\label{fig:CHF_reconstr}} 
\end{figure}

\begin{figure}[t]
\centering
\includegraphics[width=0.45\linewidth]{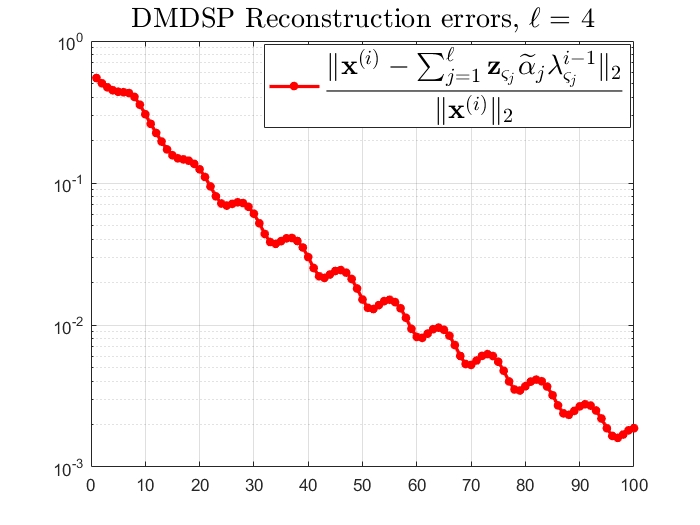}\hfill
\includegraphics[width=0.45\linewidth]{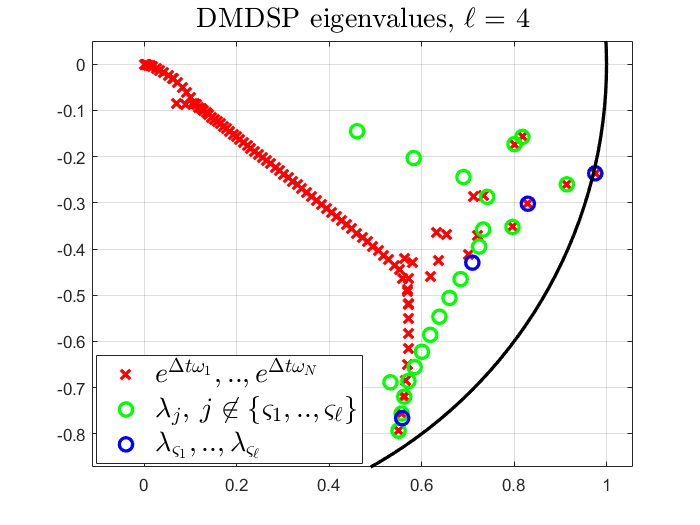}
\caption{\textit{Left:} DMDSP reconstruction errors with $\ell=4$ in \cref{eq:f_i-reconstruct-ell}. \textit{Right:} The eigenvalues $\lambda_{\varsigma_1},\ldots,\lambda_{\varsigma_{\ell}}$ selected by the sparsity-constrained optimizer in DMDSP. \label{fig:CHF_DMDSPreconstr}} 
\end{figure}

An attentive reader will have noticed the relationship between the absolute values of the coefficients $\alpha_j$ (right panel of \cref{fig:CHF_reconstr}) and the residuals (right panel of \cref{DMD_eigs_residuals}). To make this clearer, the left panel of \cref{fig:CHF_alphas_resids} compares $|\alpha_j|$ with $1/r_k(j)$. Since modes (and approximate Koopman eigenvalues) with small residuals are typically more reliable---and the residuals are readily available from \cref{zd:ALG:DMD}---it is natural to pursue sparse representations using modes with the smallest residuals.
For instance, with a residual threshold of $10^{-6}$, the selected eigenvalues are shown in \cref{fig:CHF_alphas_resids}. The corresponding pairs $(\lambda_{\varsigma_j}, \zv_{\varsigma_j})$, $j=1,\ldots,\ell$, are then used in \cref{eq:rec-error-min} to refine the coefficients $\widetilde\alpha_1,\ldots,\widetilde\alpha_{\ell}$, where suitable weighting can prioritize more relevant (e.g., most recent, for forecasting) snapshots.

Pruning modes with large residuals can also be combined with DMDSP optimization. The use of infinite-dimensional residuals for improved compression and mode selection (including nonlinear systems) is considered in \cite{colbrook2023residualJFM,colbrook2024another}.

\begin{figure}[t]
\centering
\includegraphics[width=0.45\linewidth]{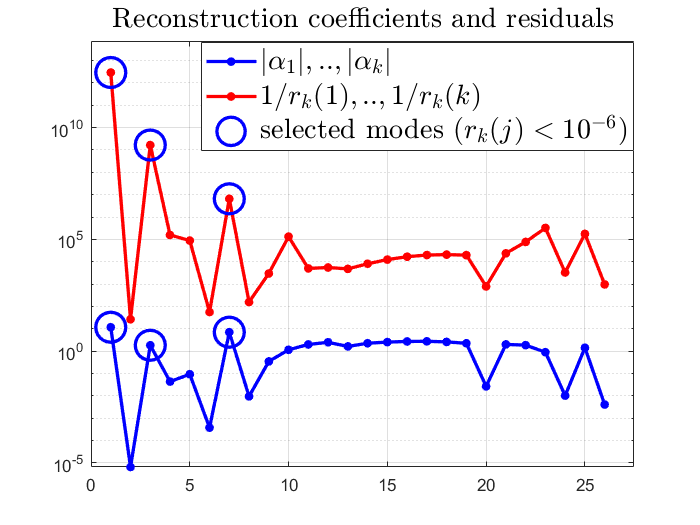}\hfill
\includegraphics[width=0.45\linewidth]{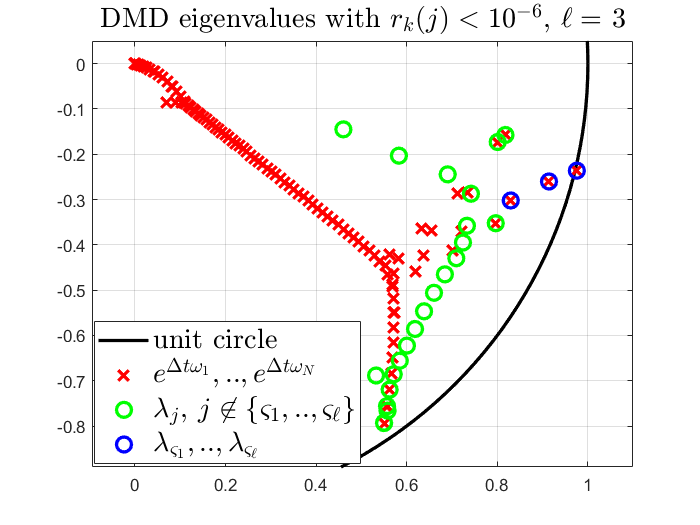}
\caption{\textit{Left:} Large $|\alpha_j|$ corresponds to small residual $r_k(j)$ (large $1/r_k(j)$).
\textit{Right:} The DMD eigenvalues $\lambda_{\varsigma_1}, \lambda_{\varsigma_2}, \lambda_{\varsigma_3}$ with residuals below $10^{-6}$. \label{fig:CHF_alphas_resids}} 
\end{figure}

\begin{figure}[t]
\centering
\includegraphics[width=0.31\linewidth]{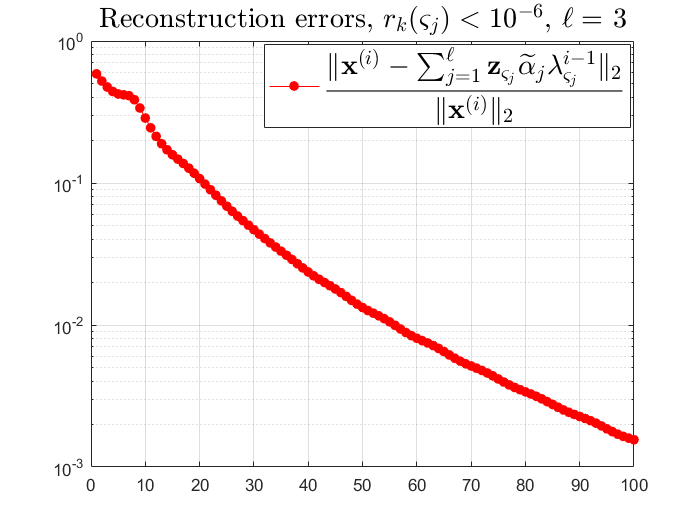}
\includegraphics[width=0.31\linewidth]{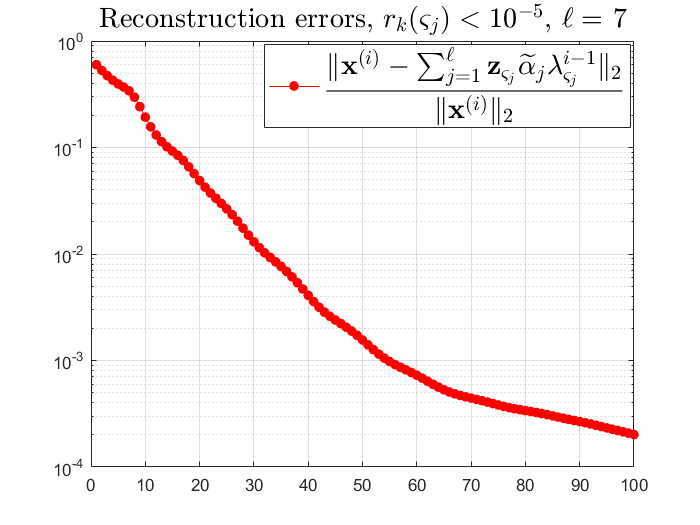}
\includegraphics[width=0.31\linewidth]{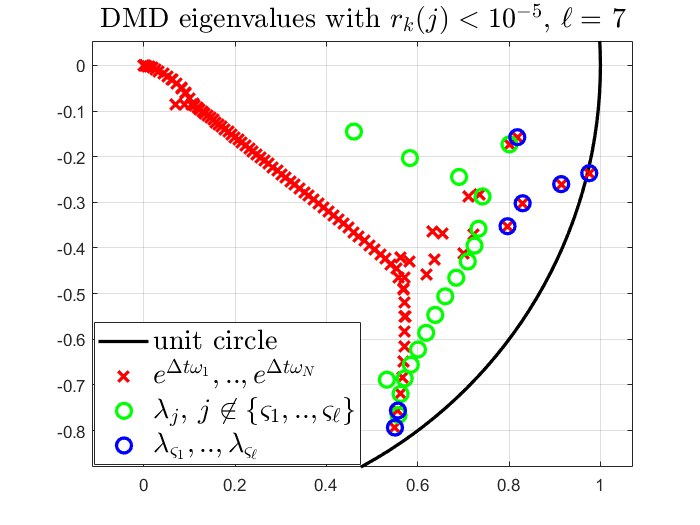}
\caption{\textit{Left:} Reconstruction error using only $(\lambda_j,\zv_j)$ with residuals below $10^{-6}$, $\ell=3$. \textit{Middle and Right:} The residual threshold is set to $10^{-5}$, $\ell=7$. (The better reconstruction accuracy for the snapshots with larger indices is rooted in the properties of Krylov sequences and the convergence mechanism of the power method.) \label{fig:CHF_L3L7}} 
\end{figure}

\section{Controlling projection error from infinite dimensions}

Given a dictionary $\{\psi_1,\ldots,\psi_N\}$ that spans $V_N=\mathrm{span}\{\psi_1,\ldots,\psi_N\}$, the (uncompressed) EDMD matrix $\Kv$ provides a data-driven approximation of the projected Koopman operator $\mathcal{P}_{V_{N}}\Koop\mathcal{P}_{V_{N}}^*$. However, truncation to $V_N$ induces approximation error that depends on the computational objective \cite{brunton2016koopman,kaiser2017data}. For example, in spectral computations, discretization may introduce spurious eigenvalues that accumulate outside the true spectrum as $N\to\infty$---the well-known phenomenon of spectral pollution \cite{lewin2009spectral,davies2004spectral,Pokrzywa_79,colbrook2019compute}. Algorithms designed to rigorously control such projection errors fall under the framework of \textit{residual DMD }(ResDMD), introduced in \cite{colbrook2021rigorousKoop}.

\subsection{Infinite-dimensional residuals}

Let $g=\Psiv\gv\in V_N$ be a candidate Koopman eigenfunction with approximate eigenvalue $\lambda$. The pair $(\lambda,g)$ might come, for example, from an EDMD eigenpair or from minimizing a residual. We assess its quality via the relative residual:
\begin{equation}
\label{residual1}
\begin{split}
\frac{\|(\Koop-\lambda I)g\|}{\|g\|}&=\sqrt{\frac{\int_{\mathcal{X}}|[\Koop g](x)-\lambda g(x)|^2\ \mathrm{d} \omega(x)}{\int_{\mathcal{X}}|g(x)|^2\ \mathrm{d} \omega(x)}}\\
&=\sqrt{\frac{\langle \Koop g,\Koop g \rangle-\lambda\langle g,\Koop g \rangle-\overline{\lambda}\langle \Koop g,g \rangle+|\lambda|^2\langle g,g \rangle}{\langle g,g \rangle}}.
\end{split}
\end{equation}
Here and below, $\|\cdot\|$ and $\langle\cdot,\cdot\rangle$ denote the $L^2(\mathcal{X},\omega)$ norm and inner product.
If $\Koop$ is a normal operator (one that commutes with its adjoint), then
$$
\mathrm{dist}(\lambda,\mathrm{Sp}(\Koop))=\inf_{f}\frac{\|(\Koop-\lambda I)f\|}{\|f\|}\leq\frac{\|(\Koop-\lambda I)g\|}{\|g\|}.
$$
In the case where $\Koop$ is non-normal, the residual in \eqref{residual1} is closely connected to the concept of pseudospectra discussed below.

Adopting the quadrature interpretation of EDMD, we define a finite-data approximation of the relative residual as follows:
$$
\mathrm{res}(\lambda,g)=\|(\Wv^{1/2}\Psiv_Y-\lambda\Wv^{1/2}\Psiv_X)\gv\|_{\ell^2}/\|\Wv^{1/2}\Psiv_X\gv\|_{\ell^2}.
$$
We then have
\begin{align}
[\mathrm{res}(\lambda,g)]^2&=\frac{\gv^*\left[\Psiv_Y^*\Wv\Psiv_Y-\lambda \Psiv_Y^*\Wv\Psiv_X -\overline{\lambda} \Psiv_X^*\Wv\Psiv_Y + |\lambda|^2\Psiv_X^*\Wv\Psiv_X \right]\gv}{\gv^*\Psiv_X^*\Wv\Psiv_X\gv}\notag\\
&=\frac{\gv\left[\Psiv_Y^*\Wv\Psiv_Y-\lambda\Av^*-\overline{\lambda}\Av+|\lambda|^2\Gv\right]\gv}{\gv^*\Gv\gv},\label{residual2}
\end{align}
where $\Gv=\Psiv_X^*\Wv\Psiv_X$ and $\Av=\Psiv_X^*\Wv\Psiv_Y$ are the matrices used in EDMD (see \cref{eq:KN-LS-solution}). The right-hand side of \eqref{residual2} has an additional matrix $\Lv:=\Psiv_Y^*\Wv\Psiv_Y$. Under the assumption that the quadrature rule converges,
$$
\lim_{M\rightarrow\infty}\mathrm{res}(\lambda,g)=\|(\Koop-\lambda I)g\|/\|g\|.
$$
The right-hand side of this equation involves neither approximation nor projection. Hence, it becomes possible to compute an \textit{infinite-dimensional residual} directly from finite matrices, achieving exactness in the limit of large data sets.

\begin{algorithm}[t]
\textbf{Input:} Snapshot data $\{(x^{(m)},y^{(m)})\}_{m=1}^M$, quadrature weights $\{w_m\}_{m=1}^{M}$, dictionary $\{\psi_j\}_{j=1}^{N}$.\\
\vspace{-4mm}
\begin{algorithmic}[1]
\STATE Compute the matrices $\Psiv_X$ and $\Psiv_Y$ defined in \cref{eq:psidef} and $\Wv=\mathrm{diag}(w_1,\ldots,w_{M})$.
\STATE Compute the EDMD matrix $\Kv= (\Wv^{1/2}\Psiv_X)^\dagger \Wv^{1/2}\Psiv_Y\in\mathbb{C}^{N\times N}$.
\STATE Compute the eigendecomposition $
\Kv\Vv=\Vv\mathbf{\Lambda}$. The columns of $\Vv=[\vv_1\cdots\vv_n]$ are eigenvector coefficients and $\mathbf{\Lambda}$ is a diagonal matrix of eigenvalues $\lambda_1,\ldots,\lambda_n$.
\STATE For eigenpairs $(\lambda_j,\vv_j)$ compute $
\mathrm{res}(\lambda_j,\Psiv\vv_j){=} \|(\Wv^{1/2}\Psiv_Y{-}\lambda_j\Wv^{1/2}\Psiv_X)\vv_j\|_{\ell^2}/\|\Wv^{1/2}\Psiv_X\vv_j\|_{\ell^2}.$
\end{algorithmic} \textbf{Output:} The eigenvalues $\mathbf{\Lambda}$, eigenvector coefficients $\Vv\in\mathbb{C}^{N\times N}$ and residuals $\{\mathrm{res}(\lambda_j,\Psiv\vv_j)\}$.
\caption{ResDMD for computing residuals.}
\label{alg:ResDMD1}
\end{algorithm}

One may, for instance, use EDMD to generate candidate eigenpairs $(\lambda,g)$ and then evaluate their residuals as in \cref{alg:ResDMD1}. This step is no more expensive than the EDMD computation itself. The approach is not restricted to EDMD and other sources of candidate pairs can be used. Spectral pollution can be avoided by discarding pairs with residuals above a chosen threshold. However, if \cref{alg:ResDMD1} relies solely on EDMD-generated pairs, parts of the spectrum may remain undetected (spectral invisibility), which typically requires pseudospectral methods to overcome.

\subsection{Pseudospectra}

Given $\epsilon>0$, we define the $\epsilon$-pseudospectrum of $\Koop$ as
\begin{equation}
\label{eq_def_pseudospectra}
\mathrm{Sp}_\epsilon(\Koop)=\mathrm{Cl}\left(\{\lambda\in\mathbb{C}:\|(\Koop-\lambda I)^{-1}\| > 1/\epsilon\}\right)=\mathrm{Cl}\left(\bigcup_{\|\mathcal{B}\|< \epsilon}\mathrm{Sp}(\Koop+\mathcal{B})\right),
\end{equation}
where $\mathrm{Cl}$ denotes the closure of a set. The set $\mathrm{Sp}_\epsilon(\Koop)$ tells how far spectra can move under perturbations of $\Koop$ of magnitude at most $\epsilon$.

Pseudospectra are important for several reasons, including:
\begin{itemize}
\item \textbf{Coherency:} An observable $g$ with $\|g\|=1$ that satisfies $\|(\Koop-\lambda I)g\|\leq\epsilon$ for some $\lambda\in\mathbb{C}$ is known as an $\epsilon$-pseudoeigenfunction. The existence of such an observable implies that $\|(\Koop-\lambda I)^{-1}\| > 1/\epsilon$ so that $\lambda\in\mathrm{Sp}_\epsilon(\Koop)$. Such observables are significant since they satisfy
$$
\|\Koop^ng-\lambda^n g\|= \mathcal{O}(n\epsilon)\quad \forall n\in\mathbb{N}.
$$
Thus, $\lambda$ characterizes an approximately coherent oscillation and decay (or growth) behavior of the observable $g$ over time. The corresponding timescale of coherency is $1/\epsilon$: smaller values of $\epsilon$ correspond to longer coherent timescales.
\item \textbf{Transient effects:} If the Koopman operator is non-normal, the system's transient behavior can differ significantly from the asymptotic behavior described by $\mathrm{Sp}(\Koop)$. In such cases, pseudospectra can be employed to detect and quantify transient effects that are not captured by the spectrum \cite{trefethen1993hydrodynamic,trefethen2005spectra}. One can also use the partial ordered Schur form of $\Koop$ proposed in \cite{Drmac-Mezic-Koopman-Schur-2024}.
	\item \textbf{Reliable computations:}	Pseudospectra allow us to identify regions of computed spectra that are accurate and reliable. Not only do they provide insights into numerical stability, but they also help detect spectral pollution.
	\item \textbf{Computing spectra:} Pseudospectra offer a method for computing spectra since
$
\lim_{\epsilon\downarrow 0}\mathrm{Sp}_\epsilon(\Koop)=\mathrm{Sp}(\Koop).
$
This convergence occurs in the Attouch–Wets metric space \cite{beer1993topologies}, corresponding to uniform convergence on compact subsets of $\mathbb{C}$. This observation extends beyond Koopman operators and has inspired recent breakthroughs in spectral computations in infinite dimensions \cite{ben2015can,colbrook2020PhD,colbrook4,colbrook3,colbrook2019compute}.
\end{itemize}

When computing pseudospectra, it is beneficial to work in the standard $\ell^2$-norm rather than the norm induced by the matrix $\Gv$. To achieve this, we first compute an economy QR decomposition of the data matrix:
$$
\Wv^{1/2}\mathbf{\Psi}_X=\Qv\Rv,\quad  \Qv\in\mathbb{C}^{M\times N},\Rv\in\mathbb{C}^{N\times N},
$$
where $\Qv$ has orthonormal columns, and $\Rv$ is upper triangular with positive diagonals. Setting $\wv=\Rv\gv$, we have
$$
\|\Wv^{1/2}\Psiv_X\gv\|_{\ell^2}^2=\gv^*\Rv^*\Qv^*\Qv\Rv\gv=\gv^*\Rv^*\Rv\gv=\wv^*\wv=\|\wv\|_{\ell^2}^2.
$$
Consequently, the residual can be expressed as:
\begin{equation}
\label{final_residual}
\mathrm{res}(z,g)=\|(\Wv^{1/2}\Psiv_Y\Rv^{-1}-z\Qv)\wv\|_{\ell^2}/\|\wv\|_{\ell^2}.
\end{equation}
For a given $z\in\mathbb{C}$, minimizing this residual corresponds to finding the smallest singular value of the matrix $(\Wv^{1/2}\Psiv_Y\Rv^{-1}-z\Qv)\in\mathbb{C}^{M\times N}$. Denoting the smallest singular value by $\sigma_{\mathrm{\inf}}$, we perform this minimization for various values of $z$. If $M>N$, a computational advantage is gained by considering the smaller $N\times N$ matrix $(\Wv^{1/2}\Psiv_Y\Rv^{-1}-z\Qv)^*(\Wv^{1/2}\Psiv_Y\Rv^{-1}-z\Qv)$ and computing
$$
\sqrt{\sigma_{\mathrm{\inf}}((\Wv^{1/2}\Psiv_Y\Rv^{-1}-z\Qv)^*(\Wv^{1/2}\Psiv_Y\Rv^{-1}-z\Qv))}=\sigma_{\mathrm{\inf}}(\Wv^{1/2}\Psiv_Y\Rv^{-1}-z\Qv).
$$
Although computing singular values in this way is not ideal—since taking square roots can introduce precision loss—the resulting error is usually negligible compared with those already present in the data matrices or quadrature. If higher precision is required, however, $\sigma_{\mathrm{\inf}}(\Wv^{1/2}\Psiv_Y\Rv^{-1}-\lambda\Qv)$ can instead be computed directly in later algorithms. Using $\Qv^*\Qv=\Iv$, we obtain
\begin{align*}
&(\Wv^{1/2}\Psiv_Y\Rv^{-1}-z\Qv)^*(\Wv^{1/2}\Psiv_Y\Rv^{-1}-z\Qv)\\
&\quad\quad\quad\quad=(\Rv^*)^{-1}\Psiv_Y^*\Wv\Psiv_Y\Rv^{-1}-z (\Rv^*)^{-1}\Psiv_Y^*\Wv^{1/2}\Qv-\overline{z}\Qv^*\Wv^{1/2}\Psiv_Y\Rv^{-1}+|z|^2\Iv.
\end{align*}
The minimum singular value of this matrix is then computed across a grid of $z$ values. This procedure is detailed in \cref{alg:ResDMD2}. If needed, the algorithm can also be extended to compute the associated $\epsilon$-pseudoeigenfunctions.

\begin{algorithm}[t]
\textbf{Input:} Snapshot data $\{(x^{(m)},y^{(m)})\}_{m=1}^M$, quadrature weights $\{w_m\}_{m=1}^{M}$, dictionary $\{\psi_j\}_{j=1}^{N}$, accuracy goal $\epsilon>0$, and grid of points $\{z_\ell\}_{\ell=1}^k\subset\mathbb{C}$.\\
\vspace{-4mm}
\begin{algorithmic}[1]
\STATE Compute the matrices $\Psiv_X$ and $\Psiv_Y$ defined in \cref{eq:psidef} and $\Wv=\mathrm{diag}(w_1,\ldots,w_{M})$.
\STATE Compute an economy QR decomposition $\Wv^{1/2}\mathbf{\Psi}_X=\Qv\Rv$, where $\Qv\in\mathbb{C}^{M\times N},\Rv\in\mathbb{C}^{N\times N}$.
\STATE Compute $\Cv_2=(\Rv^*)^{-1}\Psiv_Y^*\Wv\Psiv_Y\Rv^{-1}$ and $\Cv_1=\Qv^*\Wv^{1/2}\Psiv_Y\Rv^{-1}$.
\STATE Compute $\tau_\ell=\sigma_{\mathrm{\inf}}(\Cv_2-z_\ell \Cv_1^*-\overline{z_\ell} \Cv_1+|z_\ell|^2\Iv)$ for $\ell=1,\ldots, k$ ($\sigma_{\mathrm{\inf}}$ is smallest singular value).

\noindent(If wanted, compute the corresponding right-singular vectors $\wv_\ell$ and set $\vv_j=\Rv^{-1}\wv_j$.)
\end{algorithmic} \textbf{Output:} Estimate of the pseudospectrum $\{z_\ell:\tau_\ell<\epsilon\}$ (if wanted, corresponding pseudoeigenfunctions $\{\mathbf{\Psi}\vv_\ell:\tau_\ell<\epsilon\}$).
\caption{ResDMD for computing pseudospectra. One can also compute the singular values directly (of a $\mathbb{C}^{M\times N}$ matrix) without the square root.}
\label{alg:ResDMD2}
\end{algorithm}

\subsection{Forecast errors and coherency}

We can also use the matrix $\Lv$ to develop forecast error bounds. Throughout this subsection, we assume that all matrices have been evaluated after taking the large-data limit $M\rightarrow\infty$. Suppose, for example, we have an approximation $g\approx\Psiv\gv$, and we aim to bound the one-step forecast error:
$$
\|\Koop g-\Psiv\KoopM_N\gv\|.
$$
If the Koopman operator $\Koop$ is bounded, we can bound this forecast error by
$$
\|\Koop(g-\Psiv\gv)\| +\|(\Koop\Psiv-\Psiv\KoopM_N)\gv\|\leq \|\Koop\|\|g-\Psiv\gv\|+\|(\Koop\Psiv-\Psiv\KoopM_N)\gv\|.
$$
The first term on the right-hand side arises from the approximation $g\approx\Psiv\gv$. The square of the second term is
\begin{align*}
&\langle \Koop\Psiv\gv,\Koop\Psiv\gv \rangle -
\langle \Koop\Psiv\gv,\Psiv\KoopM_N\gv \rangle -
\langle \Psiv\KoopM_N\gv, \Koop\Psiv\gv \rangle +
\langle \Psiv\KoopM_N\gv, \Psiv\KoopM_N\gv \rangle\\
&\quad\quad\quad=\langle \Koop\Psiv\gv,\Koop\Psiv\gv \rangle-\langle \Psiv\KoopM_N\gv, \Psiv\KoopM_N\gv \rangle
=\gv^*(\Lv-\KoopM_N^*\Gv\KoopM_N)\gv,
\end{align*}
where the first equality follows since $\KoopM_N$ corresponds to the compression $\mathcal{P}_{V_{N}}\Koop\mathcal{P}_{V_{N}}^*$. Thus, we have the bound
$$
\|\Koop g-\Psiv\KoopM_N\gv\|\leq
\|\Koop\|\|g-\Psiv\gv\| + \sqrt{\gv^*(\Lv-\KoopM_N^*\Gv\KoopM_N)\gv}.
$$
Often $\|g-\Psiv\gv\|$ can be approximated from the snapshot data:
$$
\|g-\Psiv\gv\|^2\approx
\sum_{m=1}^{M}\omega_k |g(x^{(m)})-\Psiv(x^{(m)})\gv|^2.
$$
The above forecast bound can be iterated to obtain error estimates for successive timesteps. For instance, for two timesteps we obtain:
$$
\|\Koop^2g-\Psiv\KoopM_N^2\gv\|\leq
\|\Koop\|\|\Koop g-\Psiv\KoopM_N\gv\| + \sqrt{(\KoopM_N\gv)^*(\Lv-\KoopM_N^*\Gv\KoopM_N)\KoopM_N\gv}.
$$
Repeated iteration yields general bounds for $\|\Koop^ng-\Psiv\KoopM_N^n\gv\|$ for any $n\in\mathbb{N}$. Under the assumption that $\mathcal{P}_{V_{N}}\Koop\mathcal{P}_{V_{N}}^*$ converges in the strong operator topology to $\Koop$, we conclude that $\Psiv\KoopM_N^n\gv$ converges to $\Koop^ng$, provided that $\lim_{N\rightarrow\infty}\|g-\Psiv\gv\|=0$.

\subsection{Convergence theory}

Several convergence results are known for ResDMD. If the quadrature rule underlying EDMD converges, then
$$
\lim_{M\rightarrow\infty}\mathrm{res}(\lambda,g)=\|(\Koop-\lambda I)g\|/\|g\|.
$$
Hence, spectral pollution can be avoided in the large-data limit by retaining only eigenpairs with small residuals (as in \cref{alg:ResDMD1}). In the same way, one obtains convergence to the forecast error bounds discussed in the previous section.

Let $\Gamma^{\epsilon}_{N,M}$ denote the output $\{z_\ell:\tau_\ell<\epsilon\}$ from \cref{alg:ResDMD2}. With a minor modification to account for the boundary case where $\tau=\epsilon$, we obtain
$$
\lim_{M\rightarrow\infty}\Gamma^{\epsilon}_{N,M}=:\Gamma^{\epsilon}_{N}\subset \mathrm{Sp}_{\epsilon}(\Koop).
$$
Hence, ResDMD provides verified approximations of pseudospectra. Moreover, under mild conditions on the dictionary and an $N$-dependent grid $\{z_\ell\}_{\ell=1}^k$, we have
$$
\lim_{N\rightarrow\infty}\Gamma^{\epsilon}_{N}=\mathrm{Cl}\left(\left\{\lambda\in\mathbb{C}:\exists g\in L^2(\mathcal{X},\omega)\text{ such that }\|g\|=1,\|(\Koop-\lambda I)g\|<\epsilon\right\}\right).
$$
As $\epsilon\downarrow 0$, the set on the right-hand side converges to the approximate point spectrum:
$$
\mathrm{Sp}_{\mathrm{ap}}(\Koop)=\left\{\lambda\in\mathbb{C}:\exists\{g_n\}_{n\in\mathbb{N}}\subset L^2(\mathcal{X},\omega)\text{ such that }\|g_n\|=1,\lim_{n\rightarrow\infty}\|(\Koop-\lambda I)g_n\|=0\right\}.
$$
Thus, ResDMD allows the computation of $\mathrm{Sp}_{\mathrm{ap}}(\Koop)$ through a convergent algorithm. Further modifications permit computing the full pseudospectrum $\mathrm{Sp}_{\epsilon}(\Koop)$, and consequently the entire spectrum $\mathrm{Sp}(\Koop)$.

\subsection{Example}

As a simple example, consider the Hamiltonian system
$$
\dot{x}=y,\quad\dot{y}=x-x^3,
$$
the undamped nonlinear Duffing oscillator, with state $(x,y)\in\mathcal{X}=\mathbb{R}^2$ and Hamiltonian $H=y^2-x^2/2+x^4/2$. We study the corresponding discrete-time system obtained by sampling with step size $\Delta t=0.3$. Specifically, we generate $10^4$ initial points sampled uniformly at random from $[-2,2]^2$, compute their trajectories over $2$ times steps, resulting in $M=2\times 10^4$ snapshot pairs. These are partitioned into $N=50$ clusters using k-means, with centroids $\cv_j$ serving as centers for radial basis functions
$
\psi_j((x,y)^\top)=\exp(-\gamma\|(x,y)^\top-\cv_j\|_{\ell^2}),
$
where $\gamma$ is the squared reciprocal of the average $\ell^2$-norm of the snapshot data after it is shifted to mean zero. 

\cref{fig:duffing} (left) shows EDMD eigenvalues with their residuals. Most are spurious, illustrating significant spectral pollution that persists even as the dictionary size $N$ increases, a consequence of finite-dimensional approximation of the infinite-dimensional Koopman operator. \cref{fig:duffing} (right) displays pseudospectra as contour plots of several $\epsilon$-levels on a logarithmic scale. For this system, the pseudospectra form annular regions $\mathrm{Sp}_\epsilon(\Koop)=\{\lambda\in\mathbb{C}:||\lambda|-1|\leq \epsilon\}$. These pseudospectra are computed using the same snapshot data and dictionary. The resulting plot converges in the double limit $\lim_{N\rightarrow\infty}\lim_{M\rightarrow\infty}$.

\begin{figure}[t]
\centering
\raisebox{-0.5\height}{\includegraphics[width=0.45\textwidth,trim={0mm 0mm 0mm 0mm},clip]{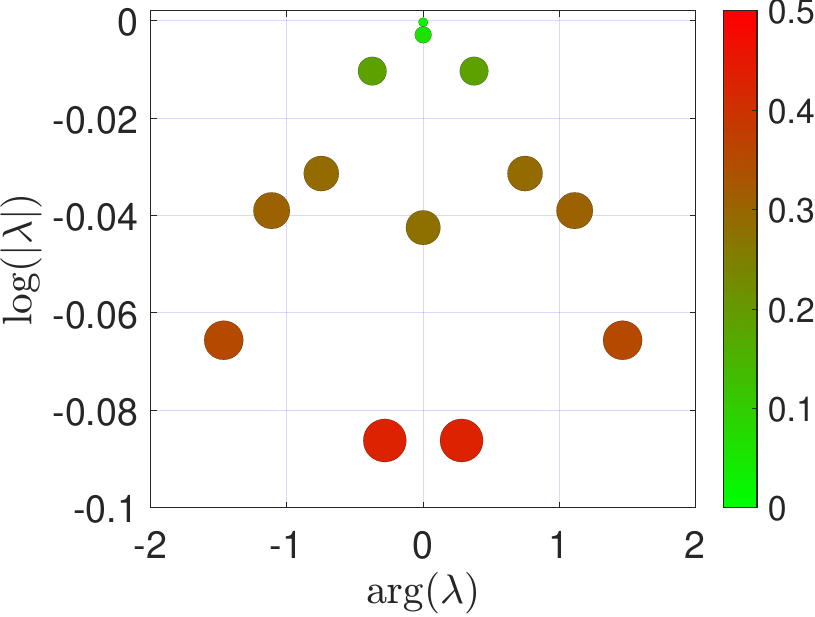}}
\raisebox{-0.5\height}{\includegraphics[width=0.45\textwidth,trim={0mm 0mm 0mm 0mm},clip]{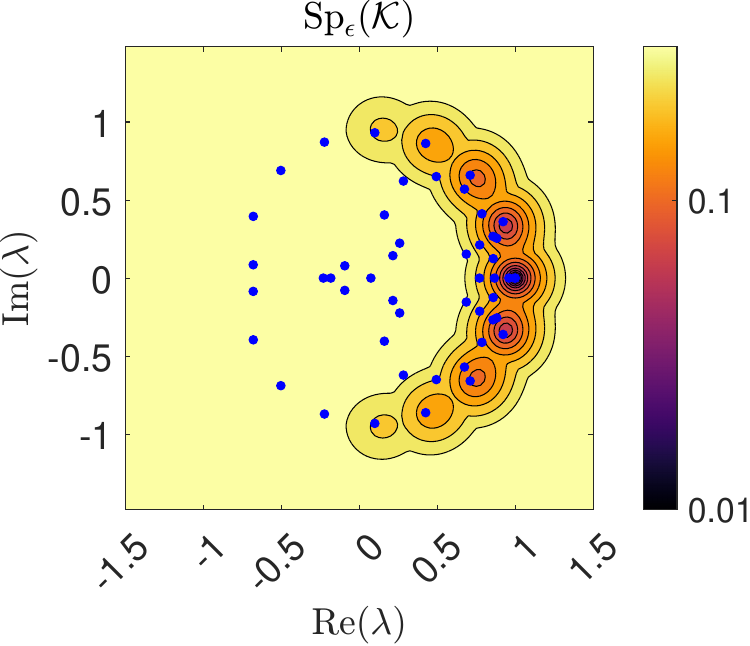}}
\caption{\textit{Left:} EDMD eigenvalues and their residuals computed using \cref{alg:ResDMD1}. The color and size of each eigenvalue represent its residual magnitude. The logarithms of eigenvalues are plotted to align with the continuous-time interpretation of the system. Since the spectrum of the associated Koopman operator is the unit circle—a horizontal line in this plot—most of the displayed EDMD eigenvalues are spurious.
\textit{Right:} Pseudospectra computed via \cref{alg:ResDMD2} using the same snapshot data and dictionary. The EDMD eigenvalues are shown as blue dots.}
\label{fig:duffing}
\end{figure}

\section{Delay embedding and Krylov subspaces}

There is no universal rule for selecting the dictionary, and choosing the functions $\psi_j$ is often more art than science. However, a particularly effective choice is a Krylov subspace based on time-delay embedding, which is well suited to high-dimensional systems, dynamics on unknown or fractal attractors, and settings with partial observations. It provides an intrinsic coordinate system that approximates an invariant subspace without requiring explicit construction. Using the same time step for both the delay interval and measurement frequency yields a data matrix with Hankel structure. In this section, we focus on \textit{Hankel-DMD}, introduced by \cite{arbabi2017ergodic}, which is simply EDMD with a dictionary generated by time-delay embedding.

From an initial observable $g$, Hankel-DMD forms the \textit{Krylov subspace}
$$
V_N=\mathrm{span}\left\{g,\Koop g,\Koop^2g,\ldots,\Koop^{N-1}g\right\}.
$$
Given a single trajectory of the observable, $\{g(x_0),g(x_1),\ldots,g(x_{M+N-1})\}$, the matrices $\Psiv_X$ and $\Psiv_Y$ in \cref{eq:psidef} are given explicitly by the Hankel matrices
$$
\Psiv_X^{(g)}=\begin{pmatrix} 
g(x_0) & g(x_1) &  \cdots & g(x_{N-1})\\
g(x_1) & g(x_2) &  \cdots & g(x_{N})\\
\vdots & \vdots & \vdots & \vdots  \\
g(x_{M-1}) & g(x_M) &  \cdots & g(x_{M+N-2})
\end{pmatrix},\quad
\Psiv_Y^{(g)}=\begin{pmatrix} 
g(x_1) & g(x_2) &  \cdots & g(x_{N})\\
g(x_2) & g(x_3) &  \cdots & g(x_{N+1})\\
\vdots & \vdots & \vdots & \vdots  \\
g(x_{M}) & g(x_{M+1}) &  \cdots & g(x_{M+N-1})
\end{pmatrix},
$$
where the superscript denotes the dependence on the observable $g$.

Suppose that the map $F$ in \cref{eq:dyn_sys} is ergodic, $\mathcal{X}$ is an attractor with a basin of attraction $\mathcal{B}$, and $\omega$ is a physical measure. If $g$ is continuous on $\mathcal{B}$, then \cite{eckmann1985ergodic}
$$
\lim_{M\rightarrow\infty}\frac{1}{M}\sum_{m=0}^{M-1}g(x_m)=\int_{\mathcal{X}} g(x)\dd \omega(x),\quad\text{for Lebesgue-almost every }x_0\in\mathcal{B}.
$$
Hence, we obtain the convergence of quadrature (large data limit $M\rightarrow\infty$). The convergence properties of Hankel-DMD as $N\rightarrow\infty$ depend on whether $g$
 generates a finite-dimensional invariant subspace.
 
\subsection{Invariant versus non-invariant subspaces}

A common assumption in Hankel-DMD is that $g$ generates a finite-dimensional $\Koop$-invariant subspace $V$ of $L^2(\mathcal{X},\omega)$. In other words, $\Koop V\subset V$, allowing us to study certain spectral properties of $\Koop$ by restricting our analysis to $V$. If such a subspace exists with dimension $k$, then it coincides with $V_k$. We can identify this invariant subspace in the limit $M\rightarrow\infty$ by selecting $N=k$ and using the aforementioned dictionary \cite{arbabi2017ergodic}. This result follows from the ergodic theorem combined with the quadrature interpretation of EDMD. These findings also hold when constructing a Krylov subspace from multiple initial observables $g_1,\ldots,g_p$.

However, the existence of such a subspace is not guaranteed, and even if it exists, the dimension $k$ is typically unknown. (This is a common misconception in papers citing \cite{arbabi2017ergodic}, which explicitly makes this point.) In practice, one postulates an \textit{approximate invariant subspace} and truncates the basis to $r \leq N$ modes using a singular value decomposition (SVD). \cref{alg:HDMD} outlines the corresponding procedure.

\begin{algorithm}[t]
\textbf{Input:} $M,N\in\mathbb{N}$, data $\{x_j\}_{j=0}^{M+N-1}$ (single trajectory),
observables $\{g_1,\ldots,g_p\}$, threshold $\epsilon_{\mathrm{tol}}>0$.\\
\vspace{-4mm}
\begin{algorithmic}[1]
\STATE Form the Hankel matrices $\Psiv_X^{(g_k)}$ and $\Psiv_Y^{(g_k)}$ for $k=1,\ldots,p$.
\STATE Compute 
					$\alpha_k= \|(g_k(x_0),\ldots,g_k(x_{M+N-1})\|/\|(g_1(x_0),\ldots,g_1(x_{M+N-1})\|$ for $k=1,\ldots,p$.
\STATE Form the matrices
$
\Xv= \begin{pmatrix}
\alpha_1 \Psiv_X^{(g_1)} & \alpha_2 \Psiv_X^{(g_2)} &\cdots & \alpha_p \Psiv_X^{(g_p)}
\end{pmatrix}
$, 
$
\Yv= \begin{pmatrix}
\alpha_1 \Psiv_Y^{(g_1)} & \alpha_2 \Psiv_Y^{(g_2)} &\cdots & \alpha_p \Psiv_Y^{(g_p)}
\end{pmatrix}.
$
\STATE Compute a truncated SVD $
\Xv \approx \Uv_1 \mathbf{\Sigma}\Uv_2^*$, $\Uv_1\in\mathbb{C}^{M\times r}$, $\mathbf{\Sigma}\in\mathbb{R}^{r\times r}$, $\Uv_2\in\mathbb{C}^{pN\times r}.$ The columns of $\Uv_1$ and $\Uv_2$ are orthonormal, $\mathbf{\Sigma}$ is diagonal, and $r$ selected to keep singular values $\geq \epsilon_{\mathrm{tol}}$.
\STATE Compute the compression $\widehat{\Kv} = \mathbf{\Sigma}^{-1}\Uv_1^*\Yv \Uv_2$ and its eigendecomposition $
\widehat{\Kv}\Vv=\Vv\mathbf{\Lambda}$.
\STATE Compute eigenfunction coordinates $\Uv_2\Vv$.
\end{algorithmic} \textbf{Output:} The eigenvalues $\mathbf{\Lambda}$ (diagonal matrix) and eigenvector coefficients $\Uv_2\Vv\in\mathbb{C}^{pN\times r}$.
\caption{Hankel DMD (EDMD with Krylov subspaces, i.e., delay embedding).}
\label{alg:HDMD}
\end{algorithm}

Suppose that we apply \cref{alg:HDMD} with $p=1$ and without truncating the singular value decomposition. If the subspace spanned by $\{g_1,\Koop g_1,\Koop^2g_1,\ldots\}$ is infinite-dimensional, then, for fixed $N$, all computed eigenvalues (in the limit $M\rightarrow\infty$) lie strictly inside the unit circle \cite{korda2020data}. Moreover, as $N$ increases, provided the spectral measure associated with $g_1$ is regular (in the sense of \cite[page 121]{simon2010szegHo}), these eigenvalues approach and become equidistributed on the unit circle \cite{korda2020data}. This is to be contrasted with mpEDMD, which we discuss below. Applying mpEDMD with a dictionary of time-delayed observables leads to an explicit convergence rate of approximations of spectral measures.

\subsection{Example}

The Lorenz (63) system \cite{lorenz1963deterministic} is the following system of differential equations:
$$
\dot{x}=10\left(y-x\right),\quad\dot{y}=x\left(28-z\right)-y,\quad \dot{z}=xy-8z/3.
$$
We consider the dynamics of $(x,y,z)$ on the Lorenz attractor. This system is chaotic and strongly mixing \cite{luzzatto2005lorenz}, so $\lambda=1$ is the only Koopman eigenvalue, corresponding to the constant eigenfunction, and it is simple. Equivalently, no nontrivial finite-dimensional invariant subspace exists. We consider the discrete-time system obtained by sampling with step size $\Delta t=0.01$.

\begin{figure}[t]
\centering
\raisebox{-0.5\height}{\includegraphics[width=0.3\textwidth,trim={0mm 0mm 0mm 0mm},clip]{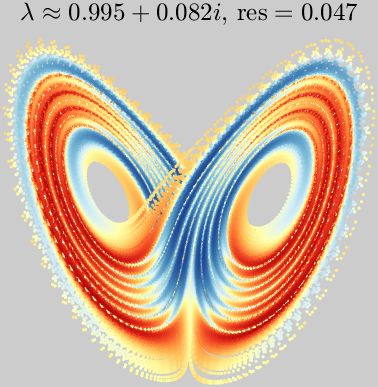}}\hfill
\raisebox{-0.5\height}{\includegraphics[width=0.3\textwidth,trim={0mm 0mm 0mm 0mm},clip]{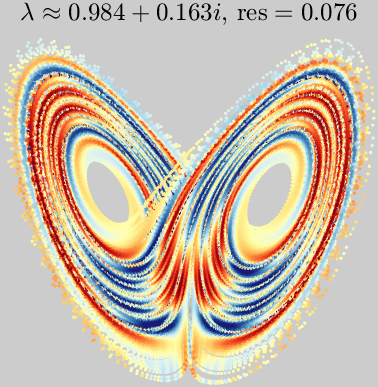}}\hfill
\raisebox{-0.5\height}{\includegraphics[width=0.3\textwidth,trim={0mm 0mm 0mm 0mm},clip]{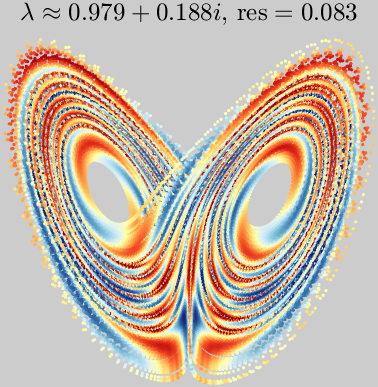}}
\caption{Pseudoeigenfunctions of the Lorenz system computed using \cref{alg:HDMD}. The residuals are computed using \cref{alg:ResDMD1}.}
\label{fig:lorenz}
\end{figure}

We apply \cref{alg:HDMD} with $M=10^5$, after a burn-in period to ensure the trajectory lies (approximately) on the Lorenz attractor. The observables are the coordinate functions $g_1=x,g_2=y,g_3=z$ and we set $N=250$, together with the constant function $1$. The tolerance is chosen as $\epsilon_{\mathrm{tol}}=10^{-10}$. For each computed eigenpair, we then use \cref{alg:ResDMD1} to evaluate the residual. \cref{fig:lorenz} shows the three nontrivial Hankel-DMD eigenfunctions with the smallest residuals. These are pseudoeigenfunctions of $\Koop$, reflecting coherent temporal behavior. The first has also been reported in earlier studies \cite{korda2020data,colbrook2024rigged}.

\section{Koopman modes and Generalized Laplace Analysis}

We now consider Koopman operators that are spectral. In fact, we will address bounded operators on a complex Banach space $X$ in greater generality. We first present a general result, which was proven in \cite{mohr2014construction} using the machinery of Yosida's mean ergodic theorem \cite{yosida2012functional}. Here, instead, we provide a direct elementary proof.

\subsection{Laplace averages for spectral operators}

The following standard facts can be found, for example, in the survey by Dunford \cite{dunford1958survey} and in Part III of Dunford and Schwartz \cite{MR412888}. Let $B(\mathbb{C})$ denote the Borel subsets of $\mathbb{C}$. A bounded and countably additive spectral measure in $X$ is map $\mathcal{E}$ from $B(\mathbb{C})$ to the set of bounded (not necessarily orthogonal) projections in $X$ such that:
\begin{itemize}
    \item $\mathcal{E}(\emptyset)=0$ and $\mathcal{E}(\mathbb{C})=I$;
    \item $\mathcal{E}(E_1\cap E_2)=\mathcal{E}(E_1)\mathcal{E}(E_2)$ and $\mathcal{E}(\mathbb{C}\backslash E)=I-\mathcal{E}(E)$ for any $E,E_1,E_2\in B(\mathbb{C})$;
    \item $\sup_{E\in B(\mathbb{C})}\|\mathcal{E}(E)\|<\infty$;
    \item If $\{E_j\}\subset B(\mathbb{C})$ are disjoint and $x\in X$,
    $
    \mathcal{E}(\cup_{j=1}^\infty E_j)x=\sum_{j=1}^\infty \mathcal{E}(E_j)x$, with convergence in $X$.
\end{itemize}
The measure $\mathcal{E}$ is called a \textit{resolution of the identity} (or \textit{spectral resolution}) for a bounded operator $T$ on $X$ if
$$
\mathcal{E}(E)T=T\mathcal{E}(E),\quad \mathrm{Sp}(\mathcal{E}(E)T\mathcal{E}(E)^*)\subset \mathrm{Cl}(E)\quad \forall E\in B(\mathbb{C}).
$$
If such a $\mathcal{E}$ exists, we say that $T$ is a \textit{spectral operator}. Any bounded spectral operator $T$ can be written as $T=S+N$, where: $S$ is a \textit{scalar type operator}, meaning that
$$
S=\int \lambda \dd\mathcal{E}(\lambda)\quad\text{and $S$ has spectral resolution $\mathcal{E}$};
$$
$N$ is quasi-nilpotent, meaning that $\lim_{k\rightarrow\infty}\|N^k\|^{1/k}=0$ (equivalently, $\mathrm{Sp}(N)=\{0\}$); and $S$ and $N$ commute. If $X$ is a Hilbert space, then a bounded operator $S$ is a scalar type operator, if and only if there exists a bounded operator $B$ with bounded inverse $B^{-1}$ such that $BSB^{-1}$ is normal.
If $X$ is finite-dimensional, then any linear operator $T$ on $X$ is spectral, and the above decomposition is the usual Jordan reduction.

The following proposition is the basis for generalized Laplace analysis.

\begin{proposition}\label{new_prop}
Let $X$ be a complex Banach space and $S$ a bounded scalar type operator on $X$ with spectral resolution $\mathcal{E}$ and spectral radius greater than $0$. Let $z\in\mathbb{C}$ have $|z|=\sup_{\lambda\in\mathrm{Sp}(S)}|\lambda|$,
then
\begin{equation}
\label{GLA_mean}
\lim_{n\rightarrow\infty} \frac{1}{n}\sum_{k=1}^n z^{-k} S^k x=\mathcal{E}(\{z\})x\quad \forall x\in X.
\end{equation}
\end{proposition}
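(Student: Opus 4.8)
The plan is to pass to the Borel functional calculus of the scalar type operator $S$, reduce the Cesàro average to a scalar limit, and then split $X$ along the spectral projection $P=\mathcal{E}(\{z\})$. Since $S=\int\lambda\,\mathrm{d}\mathcal{E}(\lambda)$ is scalar type, the map $f\mapsto f(S):=\int f\,\mathrm{d}\mathcal{E}$ is a bounded algebra homomorphism on bounded Borel functions with $\|f(S)\|\le 4K\|f\|_\infty$, where $K:=\sup_{E}\|\mathcal{E}(E)\|$, and $\mathcal{E}$ is concentrated on $\mathrm{Sp}(S)$, on which $|\lambda|\le|z|$. Hence $\tfrac1n\sum_{k=1}^n z^{-k}S^k=\int f_n\,\mathrm{d}\mathcal{E}$ with $f_n(\lambda)=\tfrac1n\sum_{k=1}^n(\lambda/z)^k$, and an elementary geometric-sum estimate gives $|f_n|\le 1$ on $\mathrm{Sp}(S)$ together with $f_n\to\mathbf{1}_{\{z\}}$ pointwise there. (If one invokes a bounded convergence theorem for spectral integrals in the strong operator topology, the proof ends here; to keep things self-contained I would instead argue by hand.) First, $S\mathcal{E}(\{z\})=\int\lambda\mathbf{1}_{\{z\}}\,\mathrm{d}\mathcal{E}=z\,\mathcal{E}(\{z\})$, so $z^{-k}S^k P=P$ for all $k$, and therefore $\tfrac1n\sum_{k=1}^n z^{-k}S^k Px=Px=\mathcal{E}(\{z\})x$ exactly, for every $n$.

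It remains to show $\tfrac1n\sum_{k=1}^n z^{-k}S^k(I-P)x\to 0$. For $\delta>0$ set $A_\delta=\{\lambda\in\mathrm{Sp}(S):|\lambda-z|\ge\delta\}$, $B_\delta=\{\lambda\in\mathrm{Sp}(S):0<|\lambda-z|<\delta\}$, $Q_\delta=\mathcal{E}(A_\delta)$, $R_\delta=\mathcal{E}(B_\delta)$, so that $Q_\delta+R_\delta=I-P$. On the closed $S$-invariant subspace $\mathrm{ran}(Q_\delta)$ the restriction of $S$ is again scalar type with spectrum contained in $\mathrm{Cl}(A_\delta)\subset\{|\lambda-z|\ge\delta\}$; hence $I-z^{-1}S$ is invertible there, and combining the identity $\sum_{k=1}^n T^k=(I-T)^{-1}T(I-T^n)$ with the uniform bound $\|(z^{-1}S)^k Q_\delta\|=\|\int_{A_\delta}(\lambda/z)^k\,\mathrm{d}\mathcal{E}\|\le 4K$ yields $\big\|\tfrac1n\sum_{k=1}^n z^{-k}S^k Q_\delta\big\|=\mathcal{O}(1/n)\to 0$. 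For the remaining piece, the same functional-calculus bound gives $\|z^{-k}S^k R_\delta\|\le 4K$ for every $k$ (because $|\lambda/z|\le1$ on $\mathrm{Sp}(S)$), whence $\big\|\tfrac1n\sum_{k=1}^n z^{-k}S^k R_\delta x\big\|\le 4K\|R_\delta x\|$ for all $n$; and since $B_\delta\downarrow\emptyset$ as $\delta\downarrow0$, countable additivity of $\mathcal{E}$ forces $\|R_\delta x\|\to0$. Given $\epsilon>0$, fix $\delta$ with $4K\|R_\delta x\|<\epsilon$ and then $n_0$ so the $Q_\delta$-term has norm $<\epsilon$ for $n\ge n_0$; this shows the $(I-P)$-part tends to $0$, and adding back the exact term $Px$ gives \eqref{GLA_mean}.

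The main obstacle is precisely the possibility that $z$ is an accumulation point of $\mathrm{Sp}(S)$ (or lies on the critical circle $|\lambda|=|z|$ with spectrum clustering toward it): then $I-z^{-1}S$ need not be invertible on $\mathrm{ran}(I-P)$, a single Neumann series does not suffice, and one genuinely needs the two-scale split into the part of the spectrum bounded away from $z$ (where invertibility produces the $\mathcal{O}(1/n)$ decay) and the part near $z$ (negligible by continuity of the spectral measure, but supporting only a uniform, non-decaying, operator bound). The only non-elementary ingredients are the standard structural facts about scalar type operators used above—the Borel functional calculus with its norm bound, the $S$-invariance of $\mathrm{ran}\,\mathcal{E}(E)$ with $\mathrm{Sp}(S|_{\mathrm{ran}\,\mathcal{E}(E)})\subset\mathrm{Cl}(E)$, and the concentration of $\mathcal{E}$ on $\mathrm{Sp}(S)$—all of which may be quoted from Dunford's survey and Part III of Dunford--Schwartz.
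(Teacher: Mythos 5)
Your proof is correct and follows essentially the same route as the paper's: pass to the functional calculus, peel off the atom $\mathcal{E}(\{z\})$, and handle the remainder by a two-scale split into the spectrum far from $z$ (where the Ces\`aro average is $\mathcal{O}(1/n)$) and the punctured neighbourhood of $z$ (where a uniform bound is combined with countable additivity of $\mathcal{E}$ to make the contribution small). The only cosmetic differences are that you obtain the far-field decay from the resolvent identity on $\mathrm{ran}\,\mathcal{E}(A_\delta)$ rather than from uniform convergence of the scalar symbols, and you get $\|\mathcal{E}(B_\delta)x\|\to 0$ by continuity from above rather than by the paper's contradiction argument with disjoint annuli.
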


Since $S$ is of scalar type, the range of $\mathcal{E}(\{z\})$ is either $0$, or, if $z$ is an eigenvalue, the corresponding eigenspace. If $z$ is an isolated point of the spectrum, then $\mathcal{E}(\{z\})$ is precisely the associated Riesz projection. It might surprise the reader that \cref{new_prop} makes no assumption about $\lambda$ being a strictly dominant point of the spectrum: $S$ does not need to satisfy the property $\sup_{\lambda\in\mathrm{Sp}(S)\backslash \{z\}}|\lambda|<|z|$.

\begin{proof}[Proof of \cref{new_prop}]
Using the functional calculus and summing a geometric series, we have
$$
\frac{1}{n}\sum_{k=1}^n z^{-k} S^k x=
\left[\int \frac{1}{n}\sum_{k=1}^n z^{-k}\lambda^k \dd \mathcal{E}(\lambda)\right]x=\mathcal{E}(\{z\})x+\left[\int_{\mathrm{Sp}(S)\backslash\{z\}}  \frac{\lambda (1-(\lambda/z)^n)}{n(z-\lambda)} \dd \mathcal{E}(\lambda)\right]x.
$$
Consider the functions in the integral on the right-hand side:
$$
f_n(\lambda) = \frac{1}{n} \frac{\lambda (1-(\lambda/z)^n)}{z-\lambda}.
$$
On any bounded set $K$ that is separated from $z$, $f_n$ converges uniformly to $0$ as $n\rightarrow\infty$. Since, integration against $\mathcal{E}$ is continuous as a map from the space of bounded measurable functions equipped with the supremum norm to the algebra of bounded operators on $X$, it follows that
$$
\lim_{n\rightarrow\infty} \int_{K}f_n(\lambda)\dd\mathcal{E}(\lambda) x=0.
$$
Moreover, the $f_n$ are uniformly bounded and, hence,
$$
\left\|\int_{E} f_n(\lambda) \dd \mathcal{E}(\lambda) x\right\|\leq C\sup_{E'\subset E} \|\mathcal{E}(E')x\|\quad \forall n\in\mathbb{N}
$$
for some constant $C$.
Hence, it is enough to show that if $C_n$ is the punctured disk $\{w\in\mathbb{C}:0<|w-z|\leq 1/n\}$, then
$
\lim_{n\rightarrow\infty}
\sup_{E\subset C_n} \|\mathcal{E}(E)x\|=0.
$
Suppose this were false, then without loss of generality (using the countably additivity of $\mathcal{E}$ and writing
$E=\cup_{n} E\cap (C_n\backslash C_{n+1})$ for $E\subset C_1$) after taking subsequences if necessary, there exists $E_n\subset C_n\backslash C_{n+1}$ and $\delta>0$ such that
$
\|\mathcal{E}(E_n)x\|\geq \delta.
$
The $E_n$'s are disjoint and, hence,
$$
\mathcal{E}(\cup_{n=1}^\infty E_n)x = 
\sum_{n=1}^\infty \mathcal{E}(E_n)x.
$$
However, the sum on the right-hand side cannot converge, a contradiction.
\end{proof}

\cref{new_prop} need not hold for spectral operators in general. For instance, let
$$
T=
\begin{pmatrix}
			1 & 1 \\
0 &1
		\end{pmatrix}
        \quad\text{so that}\quad
\frac{1}{n}\sum_{k=1}^n 1^{-k} T^k=\begin{pmatrix}
			1 & \frac{n+1}{2}\\
0 &1
		\end{pmatrix},
$$
which clearly does not converge. However, if we can write $T$ as $T=S+A$, where $S$ is of scalar type, $\|A\|<\sup_{\lambda\in\mathrm{Sp}(S)}|\lambda|$, and such that $T^k=S^k+A^k$ for any $k\in\mathbb{N}$ (which need not hold in general of course) then
$$
\frac{1}{n}\sum_{k=1}^n z^{-k} T^k=\frac{1}{n}\sum_{k=1}^n z^{-k} S^k+\frac{1}{n}\sum_{k=1}^n z^{-k} A^k.
$$
The operator norm of the second term converges to zero and, hence, the proposition holds in this case too.

\subsection{Consequence for Koopman operators}

The setting described in the previous subsection can be directly applied to Koopman operators. Specifically, let $X$ be a Banach space of observables on $\mathcal{X}$ for which the associated Koopman operator is bounded and spectral, with resolution $\mathcal{E}$. Often, $\Koop$ has a portion of its spectrum located on the unit circle, corresponding to on-attractor dynamics, and eigenvalues inside the open unit disk, corresponding to dissipative off-attractor dynamics \cite{Drmac-Mezic-Koopman-Schur-2024, mohr2014construction, mezic2020spectrum, DMD-Vand-Cauchy-DFT}.

Suppose that $z$ is an eigenvalue of algebraic multiplicity 1, with eigenfunction $\phi_z$. Then, for an observable $g\in X$, we can write
$
\mathcal{E}(\{z\})g = s_g\phi_z.
$
The scalar $s_g$ is known as the \emph{Koopman mode} associated with $z$. If we consider multiple observables simultaneously, say $g=(g_1,\ldots,g_k)$, we obtain a Koopman mode vector $s_g\in\mathbb{C}^k$. Note that, even after normalizing the eigenfunction, Koopman modes remain defined only up to a complex phase. Therefore, one typically compares Koopman modes for several observables by evaluating the eigenfunction $\phi_z$ at a fixed point in the state space—for example, at an initial condition from trajectory data.

If $\Koop$ is scalar-type and the eigenvalue $z$ has absolute value equal to the spectral radius of $\Koop$, we can apply \cref{new_prop} directly to compute the associated Koopman mode. Suppose instead that $z$ is an eigenvalue for which there exist finitely many points in the set $\{w\in\mathrm{Sp}:|w|>|z|\}$, all of which are eigenvalues. We may then apply \cref{new_prop} successively to these eigenvalues, subtracting the differences to ultimately obtain the Koopman mode corresponding to $z$.

However, this procedure can become numerically unstable and requires prior knowledge of these eigenvalues. If $z$ is an isolated spectral point, a better approach—when feasible—is to perform a shift-and-invert transformation and consider the scalar operator $(\Koop-z'I)^{-1}$, where $z'$ is chosen sufficiently close to (but not equal to) $z$.

For a discussion of generalized Laplace analysis applied to Koopman operators of fluid flows, see \cite{mezic2013analysis}. For invertible measure-preserving systems, where the Koopman operator is unitary, harmonic averages such as \cref{GLA_mean} were already known by Schuster \cite{schuster1897lunar} for uncovering hidden periodicities in signals; see the discussion in Wiener's classic generalized harmonic analysis \cite{wiener1930generalized}.

\subsection{Example}

We consider a large-scale, wall-resolved turbulent flow past a periodic cascade of airfoils with a stagger angle of $56.9^{\circ}$ and a one-sided tip gap. This configuration is motivated by the need to reduce noise generated by flying objects \cite{peake2012modern}. We employ a high-fidelity simulation that solves the fully nonlinear Navier--Stokes equations \cite{koch2021large} at a Reynolds number $3.88\times10^5$ and Mach number $0.07$. The dataset comprises a two-dimensional slice of the mean-subtracted pressure field, measured at $295,122$ spatial points, obtained from a single trajectory containing $797$ snapshots sampled every $2\times 10^{-5}$s.

We first apply a kernelised version of ResDMD with a Gaussian kernel \cite{colbrook2024another}, corresponding to a dictionary of $N=797$ Gaussian radial basis functions. The minimum residuals over the unit circle (corresponding to pseudospectral approximations) are shown in the top left panel of \cref{fig:aerofoil}. Several minima have been highlighted, and for four of these, we present the Koopman modes computed using generalized Laplace analysis. Here, the vector of observables corresponds to the pressure measured at each grid point, resulting in a Koopman mode represented by a $295,122$-dimensional vector plotted over the state space. Applying \cref{GLA_mean} is particularly simple for a single trajectory of data since the Koopman operator acts by shifting along the time sequence.

\begin{figure}[t]
\centering
\raisebox{-0.5\height}{\includegraphics[width=0.8\textwidth,trim={2mm 0mm 0mm 0mm},clip]{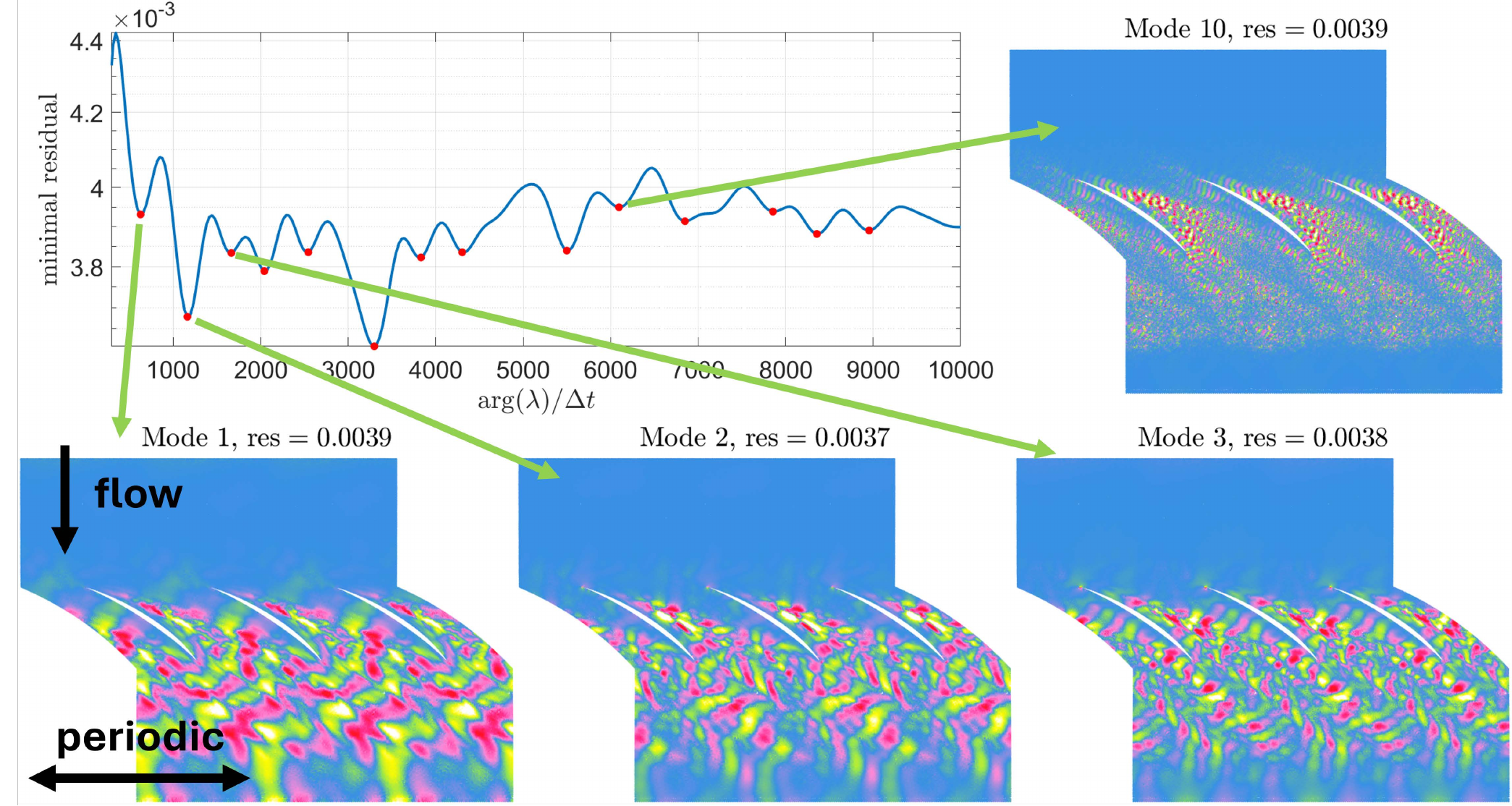}}
\caption{Koopman modes computed using generalized Laplace analysis for a turbulent flow past a periodic cascade of airfoils.}
\label{fig:aerofoil}
\end{figure}

\section{Computing spectral measures of Koopman operators}

Koopman operators associated with measure-preserving systems often exhibit continuous spectrum. This is not a pathological oddity! It arises naturally in connection with chaos and mixing phenomena, Hamiltonian structure, singular invariant manifolds, and other key dynamical features~\cite{mezic2005spectral,arbabi2017ergodic,mezic2020spectrum,korda2020data,colbrook2021rigorousKoop,colbrook2024rigged}. Unlike eigenvalues, points in the continuous spectrum are not generally linked to finite-dimensional invariant subspaces of the Hilbert space $L^2(\mathcal{X},\omega)$. As a result, Koopman operators with continuous spectra require a distinct set of tools for analysis and computation.

In this section, we examine the three leading computational paradigms for \textit{spectral measures} of the Koopman operator, which form the foundation of computational spectral analysis for unitary operators on a Hilbert space. We conclude with a brief discussion of other approaches for operators with continuous spectrum.

\subsection{Spectral measures of unitary operators}

Suppose the dynamical system is measure-preserving with respect to $\omega$ and that the dynamics are invertible $\omega$-almost everywhere. In this setting, $\mathcal{K}: L^2(\mathcal{X},\omega)\rightarrow L^2(\mathcal{X},\omega)$ is a unitary operator and its spectrum is contained in the unit circle $\mathbb{T}$. Since $\mathcal{K}$ may have a blend of point and continuous spectrum, its spectral decomposition is written as an integral against an orthogonal projection-valued measure supported on $\mathbb{T}$. For convenience, we parametrise $\mathbb{T}$ with the angle coordinate $\theta\in\pp$ through the change-of-variables $z=\exp(i\theta)$. Here, $\pp$ denotes the periodic interval $[-\pi,\pi]$ and we write integrals over $\pp$ to avoid ambiguity when measures have point masses at $\pm\pi$. Then, $\mathcal{K}$ can be diagonalized by an orthogonal projection-valued spectral measure $\pvm$ supported on $\pp$, in the sense that
\begin{equation}\label{eqn:koopman_diagonalization}
g = \int_{\pp} 1\dd\pvm(\theta)\,g, \qquad\text{and}\qquad \mathcal{K}g = \int_{\pp} e^{i\theta}\dd\pvm(\theta)\,g.
\end{equation}
Typically, one is interested in scalar spectral measures $\xi_g$ defined by the moments
\begin{equation}\label{eqn:koopman_moments}
c_n\coloneqq\langle \mathcal{K}^ng,g\rangle = \int_{\pp} e^{in\theta} \dd\xi_g(\theta), \qquad\text{for}\qquad n\in\mathbb{Z},
\end{equation}
which uniquely determines $\mathcal{K}$ on $\mathrm{span}\{g,\mathcal{K}^{\pm 1}g,\mathcal{K}^{\pm 2}g,\ldots\}\subset L^2(\mathcal{X},\omega)$. The moments of $\xi_g$ are, up to rescaling and conjugation, its Fourier coefficients. Consequently, the spectral measure $\xi_g$ associated with an observable $g$ provides a Koopman analogue of the classical power spectral density of a time series. For ergodic systems, discrete-time correlation functions coincide with inner products and $\xi_g$ is precisely the power spectral density of the time series $\{\langle\mathcal{K}^ng,g\rangle\}_{k=0}^\infty$~\cite {arbabi2017study}.

In the following subsections, we examine three families of algorithms for approximating scalar spectral measures from snapshot data:
\begin{itemize}
\item The first family approximates $\xi_g$ from the \textit{moments} in~\eqref{eqn:koopman_moments} using techniques for Fourier series, quadrature rules, and orthogonal polynomials on the unit circle.

\item The second family approximates $\xi_g$ via the \textit{eigenvalues} of finite-dimensional approximations of the Koopman operator. Structure-preserving discretizations of $\mathcal{K}$ are crucial for ensuring convergence of the resulting measures.

\item The third family approximates $\xi_g$ through finite-dimensional approximations of the Koopman operator's \textit{resolvent}, motivated by Sokhotski--Plemelj formulas that recover measures from their Cauchy transforms.
\end{itemize}
All three approaches produce a sequence of approximate measures $\tilde \xi_g^{(N,M)}$ that converge weakly to $\xi_g$ as the snapshot dimensions grow, i.e.,
\begin{equation}\label{eqn:weak_conv_meas}
\lim_{N\rightarrow\infty}\lim_{M\rightarrow\infty}\int_{\pp} \phi(\theta)\dd\tilde\xi_g^{(N,M)}(\theta) = \int_{\pp} \phi(\theta)\dd\xi_g(\theta) \qquad\forall \phi\in\Phi,
\end{equation}
for suitable classes of test functions $\Phi$.
The limit $M\rightarrow\infty$ increases the resolution of the dynamics on the state space $\mathcal{X}$ as more snapshots are collected, while $N\rightarrow\infty$ refines the resolution in observable space by enlarging the dictionary. It is \textit{essential that these limits are taken in this order} to obtain convergent approximations to $\xi_g$.

For each family, we provide convergence rates (and, in some cases, explicit error bounds) as $N\rightarrow\infty$ that improve with the regularity of the test functions in $\Phi$. This enables one to select $N$ to meet a desired accuracy and then refine $M$ accordingly. In practice, algorithms that exploit smoothness in $\Phi$ are typically more data-efficient, achieving high accuracy with smaller $N$ and often smaller $M$ as well.

Additionally, some methods from the moment-based and resolvent-based families can approximate elements of the Radon--Nikodym decomposition of $\xi_g$:
\begin{equation}\label{eqn:radon_nikodym_decomp}
\xi_g = \xi_g^{{\rm pp}} + \xi_g^{{\rm ac}} + \xi_g^{{\rm sc}},
\end{equation}
where $\xi_g^{{\rm pp}}$ is atomic, $\xi_g^{{\rm ac}}$ is absolutely continuous, and $\xi_g^{{\rm sc}}$ is singularly continuous (all with respect to the uniform measure) \cite{colbrook2019computing,korda2020data}, \cite[Section 11.6.3]{colbrookBook}. For example, if $\rho_g=\mathrm{d}\xi_g^{{\rm ac}}/\mathrm{d}\lambda$ is the Radon--Nikodym derivative of the absolutely continuous part of $\xi_g$, one may attempt to approximate $\rho_g$ directly from snapshot data. Similarly, one may aim to identify the support of the atomic component. To achieve stability and convergence with the limited state-space and snapshot resolutions associated with finite data, such approximations use regularization---both for estimating $\rho_g$ and, in some cases, for localizing the support of the atomic part. Convergence results then depend on $N$, $M$, and a regularization parameter $\sigma>0$.

Such techniques can reveal valuable spectral information about $\mathcal{K}$, but caution is needed: computing the full Lebesgue decomposition of $\xi_g$ typically requires multiple limits~\cite{colbrook2019computing}, a fact formalized in the SCI hierarchy~\cite{ben2015can,Hansen_JAMS} and recently analyzed for Koopman operators in~\cite{colbrook2024limits}.

\subsection{Family I: Moment-based methods}

In this section, we present two methods for approximating the spectral measure $\xi_g$ from a truncated sequence of moments $\{c_n\}_{n=-N}^N$. These moments can be computed from snapshot data using, for example, Monte Carlo integration, state-space quadrature, or ergodic averaging. In practice, the choice of quadrature for computing moments depends on the structure of the snapshot data collected along dynamical trajectories~\cite{colbrook2021rigorousKoop}. This is directly analogous to the choice of quadrature used to construct finite-dimensional approximations of the Koopman operator in~\cref{eq:LS:Upi}.

\subsubsection{Quadrature approximation}

The first type of moment-based approximation constructs a discrete approximation to $\xi_g$ in the form of a quadrature rule~\cite{korda2020data}. Given nodes $\theta_{-N},\ldots,\theta_N$ and weights $w_{-N},\ldots,w_N$, the measure $\xi_g$ is approximated by a weighted sum of Dirac deltas:
\begin{equation}\label{eqn:quad_approx}
    \xi_g^{(N)}(\theta)=\sum_{j=-N}^N w_j\delta(\theta-\theta_j), \qquad\text{where}\qquad \theta\in \pp.
\end{equation}
Discrete approximations to $\xi_g$ are equivalent to quadrature formulas, since
$$
\int_{\pp}\phi(\theta)\dd \xi_g^{(N)}(\theta) = \sum_{j=-N}^N w_j\phi(\theta_j).
$$
The quadrature rule is called \textit{interpolatory} if the nodes and weights are chosen to integrate all trigonometric polynomials of degree at most $N$ exactly, i.e.,
\begin{equation}\label{eqn:gqmoments}
c_k=\int_{\pp} e^{ik\theta}\dd\xi_g(\theta) = \sum_{j=-N}^N w_j e^{ik\theta_j}, \qquad\text{for}\qquad k = 0,\pm 1,\ldots,\pm N.
\end{equation}
As a result, one might expect that the quadrature approximation is accurate for integrands that are well-approximated by degree-$N$ trigonometric polynomials on the periodic interval $\pp$. This is true provided that the quadrature weights are \textit{uniformly summable}, i.e., that $\sup_{N\geq 1}\sum_{j=-N}^N|w_j|< \infty$. When this condition holds, the interpolatory quadrature rule converges for every continuous integrand and the measure $\smash{\xi_g^{(N)}}$ converges weakly to $\xi_g$ in the sense of measures. Moreover, the rate of weak convergence improves with the regularity of the test function.

\begin{theorem}\label{thm:weak_conv_gq}
    Let $\mathcal{K}:L^2(\mathcal{X},\omega)\rightarrow L^2(\mathcal{X},\omega)$ be unitary, $g\in L^2(\mathcal{X},\omega)$, and $c_n=\langle\mathcal{K}^ng,g\rangle$ for $n=-N,\ldots,N$. Given nodes $\theta_{-N},\ldots,\theta_N$ and weights $w_{-N},\ldots,w_N$ satisfying~\cref{eqn:gqmoments}, define $\xi_g^{(N)}(\theta)=\sum_{j=-N}^N w_j\delta(\theta-\theta_j)$. For a given $\phi\in C(\pp)$, consider
$$
E_N(\phi) = \left\lvert \int_{\pp} \phi(\theta)\dd\xi_g(\theta) - \int_{\pp} \phi(\theta)\dd \xi_g^{(N)}(\theta)\right\rvert,
$$
where $\xi_g$ is the scalar spectral measure of $\mathcal{K}$ in~\cref{eqn:koopman_moments}. If the weights are uniformly summable, that is, $\sup_{N\geq 1}\sum_{j=-N}^N|w_j|<\infty$, then the following hold:
\begin{itemize}
    \item[(a)] If $\phi\in C(\pp)$, then $E_N(\phi)\rightarrow 0$ as $N\rightarrow\infty$.
    \item[(b)] If $\phi\in C^{p-1}(\pp)$ and $\phi^{(p)}$ has bounded variation ($p\geq 1$), $E_N(\phi) = \mathcal{O}(N^{-p})$.
    \item[(c)] If $\phi$ is real analytic on $\pp$, then there is a $\gamma>1$ such that $E_N(\phi) = \mathcal{O}(\gamma^{-N})$.
\end{itemize}
\end{theorem}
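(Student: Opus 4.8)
The plan is to reduce all three claims to one $N$-independent inequality: the quadrature error $E_N(\phi)$ is bounded by a fixed constant times $E_N^{\mathrm{trig}}(\phi)$, the error of best uniform approximation of $\phi$ by trigonometric polynomials of degree at most $N$ on $\pp$. Granting this, parts (a), (b) and (c) follow immediately from, respectively, the trigonometric Weierstrass theorem, Jackson's theorem, and the exponential decay of Fourier coefficients of periodic analytic functions. So the only work is the reduction itself, plus quoting the right approximation-theory estimates.

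The single structural step is the observation that the interpolatory condition~\cref{eqn:gqmoments} says exactly that $\xi_g$ and $\xi_g^{(N)}$ assign the same value to every trigonometric polynomial of degree at most $N$: if $q(\theta)=\sum_{|k|\le N}a_ke^{ik\theta}$, then $\int_{\pp}q\dd\xi_g=\sum_{|k|\le N}a_kc_k=\int_{\pp}q\dd\xi_g^{(N)}$. Let $p_N$ be a best uniform trigonometric approximant of degree at most $N$ to $\phi$, with $E_N^{\mathrm{trig}}(\phi)=\|\phi-p_N\|_\infty$. Subtracting the identity $\int p_N\dd\xi_g=\int p_N\dd\xi_g^{(N)}$ and applying the triangle inequality gives
$$
E_N(\phi)=\left|\int_{\pp}(\phi-p_N)\dd\xi_g-\int_{\pp}(\phi-p_N)\dd\xi_g^{(N)}\right|\le\|\phi-p_N\|_\infty\Big(\xi_g(\pp)+\sum_{j=-N}^N|w_j|\Big).
$$
Now $\xi_g(\pp)=c_0=\langle g,g\rangle=\|g\|^2$ is a fixed constant, and the uniform-summability hypothesis provides $\Lambda:=\sup_{N\ge1}\sum_j|w_j|<\infty$, so $E_N(\phi)\le(\|g\|^2+\Lambda)\,E_N^{\mathrm{trig}}(\phi)$ for all $N$.

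It then remains to insert classical rates. For (a), the trigonometric Weierstrass theorem gives $E_N^{\mathrm{trig}}(\phi)\to0$ for every $\phi\in C(\pp)$. For (b), a function of bounded variation is bounded, so $\phi^{(p-1)}$ has a bounded derivative and is Lipschitz; Jackson's theorem then yields $E_N^{\mathrm{trig}}(\phi)=\mathcal{O}(N^{-(p-1)}\cdot N^{-1})=\mathcal{O}(N^{-p})$ (alternatively one cites the sharper form of Jackson's theorem stated directly in terms of the total variation of $\phi^{(p)}$). For (c), a periodic real-analytic $\phi$ extends holomorphically to a strip $|\mathrm{Im}\,z|<\eta$ for some $\eta>0$, so its Fourier coefficients decay like $e^{-\eta'|k|}$ for each $\eta'<\eta$, and truncating the Fourier series to degree $N$ gives $E_N^{\mathrm{trig}}(\phi)=\mathcal{O}(\gamma^{-N})$ with $\gamma=e^{\eta'}>1$. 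Combining each estimate with the displayed bound proves the three assertions.

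I do not anticipate a serious obstacle once the exactness observation is in place. The two points that require care are: (i) genuinely using uniform summability — without it the factor $\sum_j|w_j|$ could grow with $N$ and the reduction collapses; and (ii) invoking Jackson's theorem in the precise form appropriate to the bounded-variation hypothesis, and recording that the constant $\gamma$ in (c) is not universal but is governed by the width of the strip of analyticity of $\phi$. Nothing from the spectral theory of $\mathcal{K}$ is used beyond the single identity $\xi_g(\pp)=\|g\|^2$, since all relevant information about $\mathcal{K}$ and $g$ has been packed into the $2N+1$ matched moments.
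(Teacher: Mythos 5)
Your argument is correct, and it shares the paper's central mechanism---exactness of the quadrature rule on trigonometric polynomials of degree at most $N$ (from \cref{eqn:gqmoments}) combined with uniform summability of the weights and the bound $|\xi_g|(\pp)=\|g\|^2$---but the execution after that point is genuinely different. You prove the single Lebesgue-type inequality $E_N(\phi)\leq(\|g\|^2+\Lambda)\,E_N^{\mathrm{trig}}(\phi)$ via a best uniform trigonometric approximant, and then import Weierstrass, Jackson, and strip-of-analyticity estimates; this unifies (a), (b), and (c) under one bound and is in fact slightly sharper, since $E_N^{\mathrm{trig}}(\phi)\leq\sum_{|k|>N}|\hat\phi_k|$ (and the right-hand side may be infinite for merely continuous $\phi$, where your reduction still applies). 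The paper instead proves (a) by combining uniform boundedness of the quadrature functionals with a density argument, and proves (b)--(c) by expanding $\phi$ in its Fourier series, applying the moment-matching conditions term by term, and bounding the error by $C\sum_{|k|>N}|\hat\phi_k|$; that route keeps the constants explicit in terms of Fourier coefficients and matches the Wiener-algebra framework used for the companion result on truncated Fourier approximations (\cref{thm:weak_conv_fs}). Two small points worth recording in your write-up: in (b) your Jackson-based step only uses that $\phi^{(p)}$ is bounded (which bounded variation supplies), so either cite Jackson in the Lipschitz form as you do or in the total-variation form directly; and in the key inequality it is the total variation $|\xi_g|(\pp)$ that appears, which coincides with $\xi_g(\pp)=\|g\|^2$ here because the scalar spectral measure of a unitary operator is positive.
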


\begin{proof}[Proof of \cref{thm:weak_conv_gq}] For any continuous function $\phi\in C(\pp)$, denote its norm by $\|\phi\|_{\pp} = \sup_{\theta\in\pp}|\phi(\theta)|$. The quadrature rule is bounded above by
$$
\left|\int_{\pp} \phi(\theta)\dd\xi_g^{(N)}(\theta)\right| = \left|\sum_{j=-N}^N w_j\phi(\theta_j)\right| \leq \|\phi\|_{\pp}\sum_{j=-N}^N |w_j|.
$$
Since the weights are uniformly summable, the uniform boundedness principle implies that the family of quadrature rules are uniformly bounded on $C(\pp)$:
$$
\sup_{N\geq 1}\sup_{\phi\in C(\pp)} \left|\int_{\pp} \phi(\theta)\dd\xi_g^{(N)}(\theta)\right|<\infty.
$$
Since the quadrature rule is also interpolatory, a standard density argument shows that the quadrature rule converges for any continuous function, which proves (a). 

To derive the convergence rates in (b) and (c), we expand $\phi$ in a Fourier series and apply the moment matching conditions in~\cref{eqn:gqmoments}. Note that if $\phi'$ has bounded variation, its Fourier coefficients satisfy $\hat\phi_k=\mathcal{O}(k^{-2})$ and its Fourier series converges uniformly and absolutely. Therefore, the approximation error satisfies
\begin{equation}\label{eqn:quadrature_error}
\begin{aligned}
E_N(\phi) &= \left\lvert \sum_{k=-\infty}^\infty \hat\phi_k \left(\int_{\pp} e^{ik\theta}\dd\xi_g(\theta) - \sum_{j=-N}^N w_je^{ik\theta_j}\right)\right\rvert \\
&\leq \sum_{|k|>N} |\hat\phi_k|\left\lvert\int_{\pp} e^{ik\theta}\dd\xi_g(\theta)\right\rvert + \sum_{|k|>N} |\hat\phi_k|\left\lvert\sum_{j=-N}^N w_je^{ik\theta_j}\right\rvert. 
\end{aligned}
\end{equation}
Now, $\xi_{\tilde g}$ with $\tilde g = g/\|g\|$ is a probability measure and  $\smash{|\xi_g|  = \|g\|^2 \int_{\pp} 1\dd\xi_{\tilde g}(\theta) = \|g\|^2}$. Since the complex exponentials have unit modulus, we have total variation bounds for the moments,
$$
\left|\int_{\pp}e^{ik\theta}\dd\xi_g(\theta)\right|\leq |\xi_g| = \|g\|^2 \qquad\forall k\in\mathbb{Z}.
$$
Applying the total variation bound for the individual moments in the first sum on the second line of~\cref{eqn:quadrature_error} and invoking the uniform summability of the weights for the second sum, we obtain the following upper bound for the quadrature error: 
\begin{equation}\label{eqn:gq_approx_bound}
E_N(\phi)\leq \left(\|g\|^2+\sup_{N'\geq 1}\sum_{j=-N'}^{N'}|w_j|\right)  \sum_{|k|>N} |\hat\phi_k| = C\sum_{|k|>N}|\hat\phi_k|,
\end{equation}
where $C$ does not depend on $N$. 
The convergence rates in (b) and (c) follow immediately from the bound in~\cref{eqn:gq_approx_bound} and standard results on the decay of Fourier coefficients for differentiable and real analytic functions~\cite[Ch.~7--8]{trefethen2019approximation}.
\end{proof}

Now, the key question: how should we choose the nodes and weights of the quadrature rule so that the moment-matching conditions in~\cref{eqn:gqmoments} are satisfied? Given any $2N+1$ distinct nodes in $\pp$ these conditions hold if the $2N+1$ unknown weights solve the complex Vandermonde system
$$
\begin{pmatrix}
    e^{-iN\theta_{-N}} & e^{-iN\theta_{-N+1}} & \cdots & e^{-iN\theta_N} \\
    e^{-i(N-1)\theta_{-N}} & e^{-i(N-1)\theta_{-N+1}} & \cdots & e^{-i(N-1)\theta_N} \\
    \vdots & \vdots & & \vdots \\
    e^{iN\theta_{-N}} & e^{iN\theta_{-N+1}} & \cdots & e^{iN\theta_N}
\end{pmatrix}
\begin{pmatrix}
    w_{-N} \\ w_{-N+1} \\ \vdots \\ w_N
\end{pmatrix}
=
\begin{pmatrix}
    c_{-N} \\ c_{-N+1} \\ \vdots \\ c_N
\end{pmatrix}.
$$
When the nodes are distinct, this Vandermonde system has a unique solution, and the resulting interpolatory quadrature rule achieves the convergence rates stated in~\cref{thm:weak_conv_gq}. In particular, if the quadrature nodes are chosen as the $(2N+1)$th roots of unity, the Vandermonde matrix becomes a scaled unitary matrix corresponding to the Discrete Fourier Transform (DFT).

A distinguished family of interpolatory quadrature formulas are the Gauss--Szegő rules, the analogue of Gauss--Jacobi quadrature for the unit circle. In Gauss--Szegő quadrature, the nodes are chosen as the roots of certain $\xi_g$-orthogonal polynomials—specifically, the paraorthogonal polynomials associated with $\xi_g$~\cite{SBarry-1,SBarry-2}. This construction allows exact integration of $2N+1$ Fourier modes, $e^{-iN\theta},\ldots,e^{iN\theta}$, using only $N$ quadrature nodes.
The nodes can be computed directly from the moments by first solving a Toeplitz linear system for the coefficients of the denominator polynomial and then performing a global root-finding procedure. To avoid the numerical difficulties inherent in root-finding, however, Korda, Putinar, and Mezić recommend instead solving an optimization problem to compute approximate Gauss--Szegő-type interpolatory rules~\cite{korda2020data}.
Alternatively, if a Koopman Krylov subspace is available—for instance, through time-delay embedding—Gauss--Szegő quadrature nodes and weights can be computed using the isometric Arnoldi algorithm~\cite{gragg1993positive,helsen2005convergence}.

\begin{figure}[t]
\centering
\includegraphics[width=0.45\linewidth]{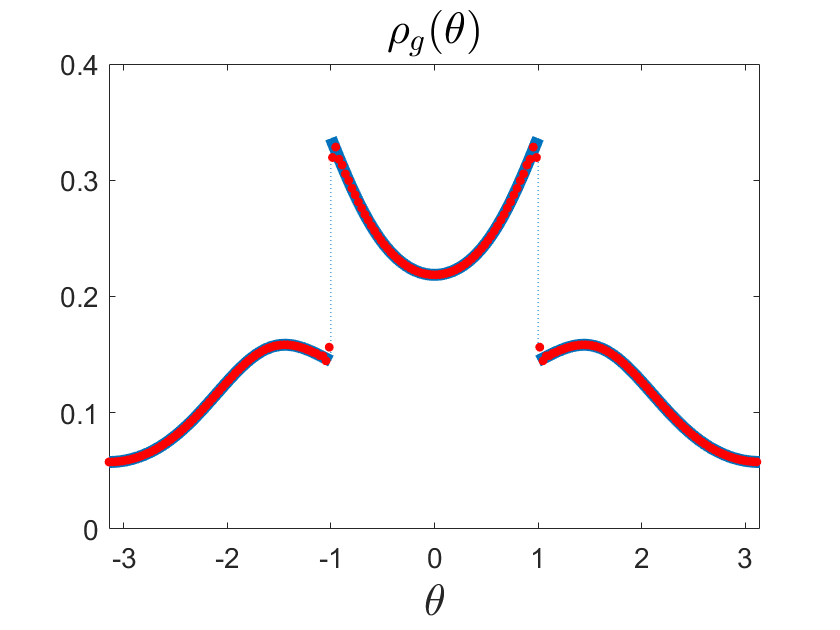}\hfill
\includegraphics[width=0.45\linewidth]{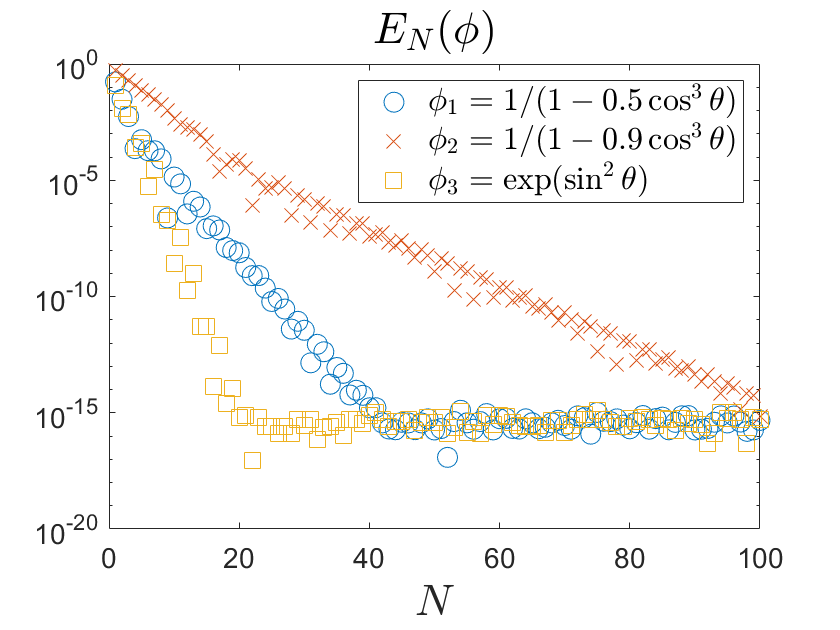}
\caption{\textit{Left:} The integrable density of the absolutely continuous measure used for moment-based numerical tests is displayed (solid blue line). The quadrature weights of the interpolatory rule for equispaced points cluster along the density after normalizing by the equilibrium (uniform) measure on the unit circle (red circles). \textit{Right:} Weak convergence for three functions of varying regularity, illustrating the rates of convergence in~\cref{thm:weak_conv_gq}. Note the classic ``kink" feature of interpolatory quadrature with fixed nodes, well-known in the setting of Clenshaw--Curtis and Fejér quadrature on the unit interval, is visible in the slowest convergence curve.}
\label{fig:interp}
\end{figure}

To illustrate the numerical properties of the interpolatory quadrature approximation, we consider an absolutely continuous measure with discontinuous density
\begin{equation}\label{eqn:test_meas}
    \rho(\theta) = 3e^{\cos\theta} + 5(1+{\rm sign}(\theta+1)-{\rm sign}(\theta-1))e^{1-\cos^2\theta}.
\end{equation}
\Cref{fig:interp} (left) shows the density (solid blue line). After rescaling the quadrature weights of the interpolatory quadrature rule by the equilibrium density $\mathrm{d}\theta/(2\pi)$ and the number of nodes, the renormalized weights (red circles) cluster along the density of $\xi_g$. \Cref{fig:interp} (right) illustrates the rates of weak convergence in~\cref{thm:weak_conv_gq} for three test functions of decreasing regularity. The least regular of these exhibits the classic ``kink" feature of interpolatory quadrature with fixed nodes, well-known in the setting of Clenshaw--Curtis and Fejér quadrature on the unit interval~\cite{weideman2007kink,trefethen2008gauss}.

\subsubsection{Fourier approximation}

The second type of moment-based approximation exploits the connection between the moments $\smash{\{c_k\}_{k=-N}^N}$ and the Fourier coefficients of $\smash{\xi_g(\theta)}$ in~\cref{eqn:koopman_moments}~\cite{korda2020data,colbrook2021rigorousKoop,arbabi2017ergodic}. Consider the truncated Fourier series
\begin{equation}\label{eqn:trig_poly_approx}
    \rho_g^{(N)}(\theta) = \frac{1}{2\pi}\sum_{k=-N}^N c_k e^{-i k \theta}, \qquad \text{where}\qquad\theta\in \pp.
\end{equation}
This truncated series provides an absolutely continuous approximation to $\xi_g$ and converges to $\xi_g$ in an appropriate weak sense as $N \to \infty$. As with the discrete quadrature approximations discussed in the previous section, the rate of weak convergence in~\cref{eqn:weak_conv_meas} depends on the regularity of the test function.

For a precise convergence statement, we work with the space of functions on the periodic interval $\pp$ whose Fourier series converge absolutely. This space, denoted $A(\pp)$ and known as the Wiener algebra, is equipped with the norm
$$
\|\phi\|_A = \sum_{k=-\infty}^\infty |\hat\phi_k|, \qquad\text{where}\qquad \hat\phi_k = \frac{1}{2\pi}\int_{\pp}\phi(\theta) e^{-ik\theta}\dd\theta.
$$
The Wiener algebra contains all Lipschitz continuous functions on $\pp$ and is a proper closed subalgebra of the continuous functions on $\pp$~\cite{katznelson2004introduction}. 

\begin{theorem}\label{thm:weak_conv_fs}
    Let $\mathcal{K}:L^2(\mathcal{X},\omega)\rightarrow L^2(\mathcal{X},\omega)$ be unitary, $g\in L^2(\mathcal{X},\omega)$, and $c_n=\langle\mathcal{K}^ng,g\rangle$ for $n=-N,\ldots,N$. Define $\rho_g^{(N)}(\theta)=\smash{\frac{1}{2\pi}\sum_{k=-N}^N c_k e^{-ik\theta}}$ and, given $\phi\in C(\pp)$, consider
$$
E_N(\phi) = \left\lvert \int_{\pp} \phi(\theta)\dd\xi_g(\theta) - \int_{\pp} \phi(\theta)\rho_g^{(N)}(\theta)\dd\theta\right\rvert,
$$
where $\xi_g$ is the scalar spectral measure of $\mathcal{K}$ in~\cref{eqn:koopman_moments}. The following hold:
\begin{itemize}
    \item[(a)] If $\phi\in A(\pp)$, then $E_N(\phi)\rightarrow 0$ as $N\rightarrow\infty$.
    \item[(b)] If $\phi\in C^{p-1}(\pp)$ and $\phi^{(p)}$ has bounded variation ($p\geq 1$), $E_N(\phi) = \mathcal{O}(N^{-p})$.
    \item[(c)] If $\phi$ is real analytic on $\pp$, then there is a $\sigma>1$ such that $E_N(\phi) = \mathcal{O}(\sigma^{-N})$.
\end{itemize}
\end{theorem}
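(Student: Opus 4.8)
The plan is to reduce the whole statement, exactly as in the proof of \cref{thm:weak_conv_gq}, to the single scalar tail estimate
\[
E_N(\phi)\;\le\;\|g\|^2\sum_{|k|>N}|\hat\phi_k|,
\]
after which part~(a) is immediate, and parts~(b) and~(c) follow from the standard decay rates of the Fourier coefficients of differentiable and real analytic periodic functions \cite[Ch.~7--8]{trefethen2019approximation}, in precise parallel with \cref{eqn:gq_approx_bound}. The only structural difference from \cref{thm:weak_conv_gq} is that the truncated Fourier approximation reproduces the moments $c_k$ \emph{exactly} for $|k|\le N$ and contributes nothing for $|k|>N$, so no Banach--Steinhaus argument is needed; the cost is that (a) is only asserted on the Wiener algebra $A(\pp)$ rather than on all of $C(\pp)$.

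The heart of the argument is to identify the approximating functional. Since $\rho_g^{(N)}$ is a fixed trigonometric polynomial, for any $\phi\in L^1(\pp)$ orthogonality of the exponentials gives
\[
\int_{\pp}\phi(\theta)\,\rho_g^{(N)}(\theta)\dd\theta
=\frac{1}{2\pi}\sum_{k=-N}^N c_k\int_{\pp}\phi(\theta)e^{-ik\theta}\dd\theta
=\sum_{k=-N}^N \hat\phi_k\,c_k .
\]
Invoking the moment representation $c_k=\int_{\pp}e^{ik\theta}\dd\xi_g(\theta)$ from \cref{eqn:koopman_moments}, this equals $\int_{\pp}(S_N\phi)\dd\xi_g$, where $S_N\phi=\sum_{|k|\le N}\hat\phi_k e^{ik\theta}$ is the $N$th Fourier partial sum of $\phi$. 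Hence $E_N(\phi)=\bigl|\int_{\pp}(\phi-S_N\phi)\dd\xi_g\bigr|\le\xi_g(\pp)\,\|\phi-S_N\phi\|_{\pp}=\|g\|^2\,\|\phi-S_N\phi\|_{\pp}$, where $\xi_g$ is a positive measure of total mass $\xi_g(\pp)=\langle g,g\rangle=\|g\|^2$ (the bound $|c_k|\le\|g\|^2$ follows the same way, or from $\mathcal{K}$ being an isometry). When $\phi\in A(\pp)$ one has the elementary bound $\|\phi-S_N\phi\|_{\pp}\le\sum_{|k|>N}|\hat\phi_k|$, which gives the displayed tail estimate.

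From here the three claims are routine. For~(a), $\phi\in A(\pp)$ means $\sum_k|\hat\phi_k|<\infty$, so the tail tends to $0$. For~(b), integrating by parts $p$ times yields $\hat\phi_k=(ik)^{-p}\widehat{\phi^{(p)}}_k$ for $k\ne0$, and bounded variation of $\phi^{(p)}$ forces $\widehat{\phi^{(p)}}_k=\mathcal{O}(|k|^{-1})$ (one further integration by parts against the Stieltjes measure $\dd\phi^{(p)}$), so $\hat\phi_k=\mathcal{O}(|k|^{-p-1})$ and $\sum_{|k|>N}|\hat\phi_k|=\mathcal{O}(N^{-p})$; such a $\phi$ is Lipschitz, hence lies in $A(\pp)$. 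For~(c), a periodic real analytic $\phi$ extends holomorphically to a strip $|\mathrm{Im}\,\theta|<\eta$, whence $|\hat\phi_k|\le C\,r^{|k|}$ for any $r\in(e^{-\eta},1)$ and the tail is $\mathcal{O}(r^{N})$; taking $\sigma=1/r>1$ finishes. The one point that needs care is the hypothesis $\phi\in A(\pp)$ in~(a): it is precisely what guarantees $S_N\phi\to\phi$ uniformly (this can fail for merely continuous $\phi$, as the partial-sum operators are unbounded on $C(\pp)$), so I expect no substantive obstacle beyond this and keeping the sign convention in the definition of $\rho_g^{(N)}$ straight.
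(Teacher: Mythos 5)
Your proposal is correct and follows essentially the same route as the paper: both rest on the identity $\int_{\pp}\phi\,\rho_g^{(N)}\dd\theta=\sum_{|k|\le N}c_k\hat\phi_k$ and the resulting tail bound $E_N(\phi)\le\|g\|^2\sum_{|k|>N}|\hat\phi_k|$, with (b) and (c) then read off from standard Fourier-coefficient decay. The only cosmetic difference is that you pass through $\|\phi-S_N\phi\|_{\pp}$ and the positivity of $\xi_g$, whereas the paper bounds each residual moment by the total variation $\|g\|^2$ directly; these yield the identical estimate.
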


Since the Wiener algebra $A(\pp)$ contains all Lipschitz continuous functions on $\pp$, (a) implies that the approximation $\smash{\rho_g^{(N)}}$ converges to $\xi_g$ in the Wasserstein-1 metric. However, counterexamples with divergent Fourier series show that weak convergence need not hold for all continuous bounded functions on $\pp$. Thus, the truncated Fourier series $\rho_g^{(N)}$ may fail to converge to $\xi_g$ weakly in the sense of measures, or equivalently, in the Levy--Prokhorov metric.

\begin{proof}[Proof of \cref{thm:weak_conv_fs}]
The function $\phi\in A(\pp)$ has an absolutely convergent Fourier series $\phi(\theta) = \sum_{k=-\infty}^\infty \hat\phi_k e^{ik\theta}$, with
$$
\int_{\pp} \phi(\theta)\dd\xi_g(\theta)
= \sum_{|k|\leq N} c_k\hat\phi_k + \sum_{|k|>N} \hat\phi_k\int_{\pp} e^{ik\theta}\dd\xi_g(\theta). 
$$
On the other hand, integrating $\phi$ against the approximate measure $\rho_g^{(N)}$ yields
$$
\int_{\pp} \phi(\theta)\rho_g^{(N)}(\theta)\dd\theta = \sum_{|k|\leq N} c_k \left[\frac{1}{2\pi}\int_{\pp}\phi(\theta)e^{-ik\theta}\dd\theta\right] = \sum_{|k|\leq N} c_k\hat\phi_k.
$$
Consequently, the approximation error satisfies the upper bound (c.f.~\cref{eqn:gq_approx_bound})
\begin{equation}\label{eqn:fs_approx_err}
E_N(\phi) = \left\lvert \sum_{|k|>N} \hat\phi_k\int_{\pp} e^{ik\theta}\dd\xi_g(\theta) \right\rvert \leq \|g\|^2 \sum_{|k|>N} |\hat\phi_k|.
\end{equation}
This bound is analogous to~\cref{eqn:gq_approx_bound} in the proof of~\cref{thm:weak_conv_gq} and the convergence in (a) follows since $\phi\in A(\pp)$. Similarly, the convergence rates in (b) and (c) again follow from~\cref{eqn:fs_approx_err} and the standard results on the decay of Fourier coefficients for differentiable and real analytic functions~\cite[Ch.~7--8]{trefethen2019approximation}.
\end{proof}

In practice, test functions are integrated against the approximate density $\rho_g^{(N)}$ using a numerical quadrature rule. Given nodes $\theta_1,\ldots,\theta_\ell$ and weights $w_1,\ldots,w_\ell$,
$$
\int_{\pp}\phi(\theta)\rho_g^{(N)}(\theta)\dd\theta \approx \sum_{j=1}^\ell w_j\, \phi(\theta_j)\,\rho_g^{(N)}(\theta_j).
$$
A natural choice is the periodic trapezoidal rule on the unit circle, with nodes $\theta_j = 2\pi j / \ell$ and weights $w_j = 2\pi / \ell$ for $j=1,\ldots,\ell$. If one sets $\ell = 2N+1$, then the quadrature error decreases with $N$ at a rate comparable to the rate of convergence in~\cref{thm:weak_conv_fs}. In general, the degree of the quadrature rule used should typically be increased as the number of moments used for $\rho_g^{(N)}$ in~\cref{eqn:trig_poly_approx} is increased, in order to accurately integrate a test function $\phi$ against $\smash{\rho_g^{(N)}}$.

\begin{figure}[t]
\centering
\includegraphics[width=0.45\linewidth]{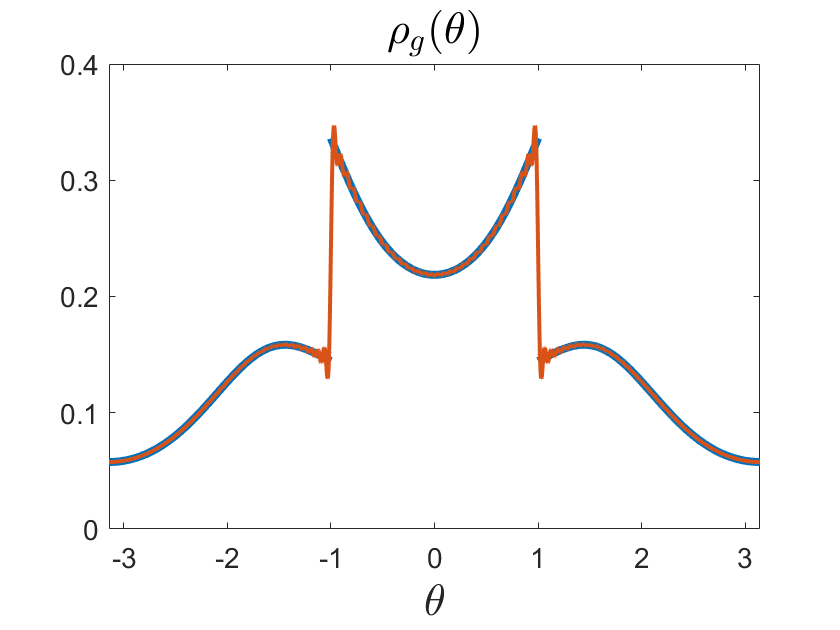}\hfill
\includegraphics[width=0.45\linewidth]{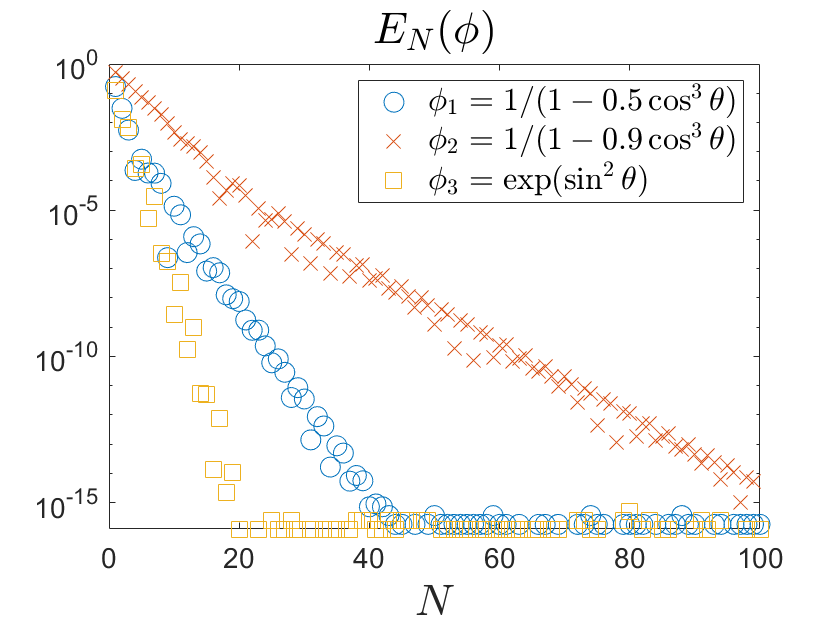}
\caption{\textit{Left:} The integrable density of the absolutely continuous measure used for moment-based numerical tests (solid blue line) is compared with the density of the truncated Fourier approximation (solid orange line). Note the classic Gibbs phenomena in the approximation, which occurs at the discontinuities of the measure. \textit{Right:} Weak convergence for three functions of varying regularity, illustrating the rates of convergence in~\cref{thm:weak_conv_fs}. The convergence curves are nearly identical to those of the interpolatory quadrature in \cref{fig:interp}, a consequence of the choice of trapezoidal rule to integrate the Fourier series against the test functions in this experiment.}
\label{fig:fourier}
\end{figure}

To compare the truncated Fourier series approximation with the interpolatory quadrature rules, we show the Fourier series analogue of the experiments in~\cref{fig:interp}. \Cref{fig:fourier} (left) compares the density (solid blue line) density in~\cref{eqn:test_meas} with the Fourier series approximation (orange line). The discontinuities in the density give rise to the classic Gibbs phenomenon, i.e., spurious oscillations of the Fourier series approximation near the discontinuities in the density, which disrupt convergence and lead to `overshoot' in the approximation. \Cref{fig:fourier} (right) demonstrates the weak convergence rates in~\cref{thm:weak_conv_gq} for the same three test functions of decreasing regularity as in~\cref{fig:interp}. Since the trapezoidal rule (a Gauss--Szegő rule for the uniform measure on the unit circle) is used to numerically integrate the truncated Fourier series against the test functions, the error curves in~\cref{fig:interp,fig:fourier} are nearly identical: they essentially reflect the accuracy of the best trigonometric polynomial approximation (up to degree $N$) of the test function.

\subsubsection{Polynomial filters and the Radon--Nikodym decomposition}

Often the spectrum of the Koopman operator is absolutely continuous on part of the unit circle. This raises the question: can one design absolutely continuous approximations to $\xi_g$ that converge locally to its Radon--Nikodym derivative $\rho_g$?

A natural starting point is the truncated Fourier series approximation $\smash{\rho_g^{(N)}}$ in~\cref{eqn:trig_poly_approx}. If $\xi_g$ is absolutely continuous on $\pp$ with density $\rho_g\in A(\pp)$, then $\smash{\rho_g^{(N)}}$ converges pointwise to $\rho_g$. As in~\cref{thm:weak_conv_fs}, the rate of pointwise convergence improves with the regularity of $\rho_g$. However, if $\xi_g$ has a singular component, or if the Radon--Nikodym derivative itself has singularities on $\pp$, the truncated Fourier series may fail to converge pointwise.

To obtain locally convergent pointwise approximations, one applies a carefully chosen \textit{filter} to the Fourier coefficients:
\begin{equation}
     \rho_{g,\nu}^{(N)}(\theta) = \frac{1}{2\pi}\sum_{k=-N}^N c_k\nu\left(\frac{k}{N}\right) e^{-i k \theta}.
\end{equation}
The filter $\nu:[-1,1]\rightarrow\mathbb{R}$ is designed to preserve the low-frequency content of $\rho_g$ while damping high-frequency components by tapering smoothly to zero as $|k|\rightarrow N$.

Intuitively, the role of the filter is to smooth $\rho_g^{(N)}$ so that the resulting function can be stably approximated by a truncated Fourier series. By the convolution theorem,
\begin{equation}\label{eqn:poly_kern}
    \rho_{g,\nu}^{(N)}(\theta) = \int_{\pp} K_\nu(\tilde\theta-\theta)\dd\xi_g(\tilde\theta), \qquad\text{where}\qquad K_\nu(\theta) = \sum_{k=-N}^N \nu\left(\frac{k}{N}\right)e^{ik\theta}.
\end{equation} 
Here, the kernel $K_\nu$ associated with the filter $\nu$ is designed to approximate the identity operator to recover $\xi_g$. At the same time, because $K_\nu$ is a degree-$N$ trigonometric polynomial in $\theta$, the convolution both smooths $\xi_g$ and truncates its Fourier series.

A classic example of such a kernel is the Fejér kernel, corresponding to Cesàro summation of Fourier series. The associated filter is the ``hat" function $\nu(x)= 1 - |x|$. In the context of Koopman spectral analysis, Korda, Putinar, and Mezić employed the Fejér kernel to approximate the cumulative distribution function of $\xi_g$, $\smash{\chi(\theta) = \int_0^{\theta}\dd\xi_g(\tilde\theta)}$, pointwise at its continuity points~\cite{korda2020data}. Colbrook and Townsend established rigorous criteria—drawing on ideas from signal processing, density estimation, and spectral computation—for constructing filters that achieve high-order pointwise approximation of $\rho_g$ as $N \to \infty$~\cite{colbrook2021rigorousKoop}. The essence of the idea is to construct a polynomial filter whose smoothing kernel approximates a Dirac delta function by matching moments up to order $m$.

In what follows, we write $\phi\in\mathcal{C}^{n,\alpha}(\mathcal{I})$ to denote a function with $n \in \mathbb{N}\cup\{0\}$ continuous derivatives on an interval $\mathcal{I}$ and an $\alpha$-H\"older continuous $n$th derivative.

\begin{theorem}[Convergence of filtered Fourier series]
\label{thm:filter_convergence}
Given integer $m>0$, let $\nu:[-1,1]\rightarrow\mathbb{R}$ be an even continuous function that satisfies (i) $\nu\in C^{(m)}([-1,1])$, (ii) $\nu(0)=1$ and $\nu^{(n)}(0) = 0$ for integers $1\leq n\leq m-1$, (iii) $\nu^{(n)}(\pm 1) = 0$ for integers $0\leq n\leq m-1$, and (iv) $\nu|_{[0,1]}\in C^{(m+1)}([0,1])$. Let $\xi$ be a Borel measure with total variation $\|\xi\|<\infty$ that is absolutely continuous on the closed interval $\mathcal{I}=[\theta_0-\eta,\theta_0+\eta]$, for some $\theta_0\in\pp$ and $\eta\in(0,\pi)$, with Radon--Nikodym derivative $\rho\in\mathcal{C}^{n,\alpha}(\mathcal{I})$. Then
$$
\left|\rho(\theta_0){-}[K_\nu*\xi](\theta_0)\right|{\lesssim}
\frac{N^{-m}\|\xi\|}{(N^{-1}+\eta)^{m+1}}{+}\begin{cases}
\|\rho\|_{\mathcal{C}^{n,\alpha}(\mathcal{I})}(1{+}\eta^{-(n+\alpha)})N^{-n-\alpha},\!\!&\text{if }n+\alpha<m,\\
\|\rho\|_{\mathcal{C}^{m}(\mathcal{I})}(1{+}\eta^{-m})N^{-m}\log(1{+}N), \!\!&\text{if }n+\alpha\geq m.
\end{cases}
$$
Here, $K_\nu$ is the smoothing kernel associated with the filter $\nu$ defined in~\cref{eqn:poly_kern}.
\end{theorem}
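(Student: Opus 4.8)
The idea is to treat $K_\nu$ as an $m$th-order approximate identity on the circle and to split the error $\rho(\theta_0)-[K_\nu*\xi](\theta_0)$ into a \emph{far-field} part, controlled by the decay of $K_\nu$ away from the origin, and a \emph{near-field} part, controlled by the local smoothness of $\rho$ together with the vanishing-moment structure built into hypothesis (ii). Two cheap reductions come first. We may assume $\eta\geq 1/N$: otherwise the first term on the right-hand side is $\gtrsim N\|\xi\|$ and the second is $\gtrsim\|\rho\|_{C(\mathcal I)}$, which together dominate $|[K_\nu*\xi](\theta_0)|+|\rho(\theta_0)|\lesssim N\|\xi\|+\|\rho\|_{C(\mathcal I)}$ since $\|K_\nu\|_{L^\infty(\pp)}\lesssim N$. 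Under $\eta\geq 1/N$ one has $\eta\leq N^{-1}+\eta\leq 2\eta$, used freely below. I take $K_\nu$ normalised so that $\int_{\pp}K_\nu(\theta)\,d\theta=1$, consistent with $\rho_{g,\nu}^{(N)}$ being an approximate density.

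First I would establish the kernel facts. Put $\nu_0:=\nu\,\mathbf{1}_{[-1,1]}$ and $\Phi(\zeta):=\int_{-1}^1\nu(t)e^{it\zeta}\,dt$. Because $\nu,\nu',\dots,\nu^{(m-1)}$ vanish at $\pm1$ by (iii), integrating by parts $m$ times produces no boundary terms and gives $|\Phi(\zeta)|=|\zeta|^{-m}\bigl|\int_{-1}^1\nu^{(m)}(t)e^{it\zeta}\,dt\bigr|$; one further integration by parts---legitimate since $\nu^{(m+1)}$ is bounded on $[-1,1]$ by (i), (iv) and evenness---bounds the last integral by a multiple of $(1+|\zeta|)^{-1}$, so $|\Phi(\zeta)|\lesssim (1+|\zeta|)^{-(m+1)}$. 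Poisson summation applied to $x\mapsto\nu_0(x/N)$ gives $K_\nu(\theta)=\tfrac{N}{2\pi}\sum_{j\in\mathbb{Z}}\Phi\bigl(N(\theta+2\pi j)\bigr)$; on $\pp$ the $j=0$ term is $\lesssim N(1+N|\theta|)^{-(m+1)}=N^{-m}(N^{-1}+|\theta|)^{-(m+1)}$, while the $j\neq0$ terms sum to $\mathcal{O}(N^{-m})$, which is absorbed. Hence
\[
|K_\nu(\theta)|\ \lesssim\ \frac{N^{-m}}{(N^{-1}+|\theta|)^{m+1}},\qquad \theta\in\pp .
\]
Finally, $\nu^{(\ell)}(0)=0$ for $1\leq\ell\leq m-1$ by (ii), so Fourier inversion gives the moment identities $\int_{\mathbb{R}}\zeta^\ell\Phi(\zeta)\,d\zeta=0$ for these $\ell$ (all convergent by the decay bound), while $\int_{\mathbb{R}}\Phi(\zeta)\,d\zeta$ is a fixed nonzero constant.

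Next I would decompose, writing $d\xi=\rho\,d\theta$ on $\mathcal{I}=[\theta_0-\eta,\theta_0+\eta]$ and using $\int_{\pp}K_\nu(\tilde\theta-\theta_0)\,d\tilde\theta=1$:
\begin{align*}
[K_\nu*\xi](\theta_0)-\rho(\theta_0)={}&\underbrace{\int_{\pp\setminus\mathcal{I}}K_\nu(\tilde\theta-\theta_0)\,d\xi(\tilde\theta)}_{(\mathrm A)}-\underbrace{\rho(\theta_0)\int_{\pp\setminus\mathcal{I}}K_\nu(\tilde\theta-\theta_0)\,d\tilde\theta}_{(\mathrm B)}\\
&{}+\underbrace{\int_{\mathcal{I}}K_\nu(\tilde\theta-\theta_0)\bigl(\rho(\tilde\theta)-\rho(\theta_0)\bigr)\,d\tilde\theta}_{(\mathrm C)}.
\end{align*}
The bound on $|K_\nu|$ is decreasing in $|\theta|$, so $(\mathrm A)\leq\|\xi\|\sup_{|s|\geq\eta}|K_\nu(s)|\lesssim N^{-m}\eta^{-(m+1)}\|\xi\|$ (the first term of the claim), and $(\mathrm B)\leq\|\rho\|_{C(\mathcal{I})}\int_{\eta\leq|s|\leq\pi}|K_\nu(s)|\,ds\lesssim\|\rho\|_{C(\mathcal{I})}N^{-m}\eta^{-m}$, which is absorbed into the stated $\eta$-weighted term. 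For $(\mathrm C)$, substitute $s=\tilde\theta-\theta_0$, then $u=Ns$, and split $K_\nu(s)=\tfrac{N}{2\pi}\Phi(Ns)+(\text{tail})$; the tail is $\mathcal{O}(N^{-m})$ uniformly on $\mathcal{I}$, contributing $\mathcal{O}(N^{-m}\|\rho\|_{C(\mathcal{I})})$. For the main part $\tfrac1{2\pi}\int_{-N\eta}^{N\eta}\Phi(u)\bigl(\rho(\theta_0+u/N)-\rho(\theta_0)\bigr)\,du$, Taylor-expand $\rho$ about $\theta_0$ to order $p=\min(n,m-1)$. The polynomial part $\sum_{\ell=1}^{p}\tfrac{\rho^{(\ell)}(\theta_0)}{\ell!\,N^\ell}\int_{-N\eta}^{N\eta}u^\ell\Phi(u)\,du$ is handled by replacing $\int_{-N\eta}^{N\eta}$ with $\int_{\mathbb{R}}$ (vanishing by the moment identities) and bounding $\int_{|u|>N\eta}|u|^\ell|\Phi(u)|\,du\lesssim (N\eta)^{\ell-m}$; the $\ell=1$ term dominates, giving a contribution $\lesssim\|\rho\|_{\mathcal{C}^{n,\alpha}(\mathcal{I})}\,\eta^{1-m}N^{-m}$, which (using $N\eta\geq1$) lies below the relevant $\eta$-weighted term. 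The Taylor remainder satisfies $|R(s)|\leq C_\rho|s|^{q}$ with $q=\min(n+\alpha,m)$, where $C_\rho\lesssim\|\rho\|_{\mathcal{C}^{n,\alpha}(\mathcal{I})}$ if $q<m$ and $C_\rho\lesssim\|\rho\|_{C^{m}(\mathcal{I})}$ if $q=m$ (Taylor to degree $n$ in the first case, to degree $m-1$ in the second), so
\[
\Bigl|\int_{-N\eta}^{N\eta}\Phi(u)\,R(u/N)\,du\Bigr|\ \lesssim\ C_\rho\,N^{-q}\int_{|u|\leq N\eta}|u|^{q}(1+|u|)^{-(m+1)}\,du ,
\]
and the final integral is $\mathcal{O}(1)$ when $q<m$ (convergent on $\mathbb{R}$) and $\mathcal{O}(\log(1+N))$ when $q=m$ (logarithmic divergence of $\int_{1\leq|u|\leq N\eta}|u|^{-1}\,du$). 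Assembling $(\mathrm A)$, $(\mathrm B)$, $(\mathrm C)$ reproduces the two cases of the statement.

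The step I expect to be the main obstacle is the sharp kernel bound with the exact exponent $m+1$: this is where all four filter hypotheses enter, with (i)--(iii) responsible for the first $m$ powers (boundary-free integration by parts of $\Phi$) and (iv) supplying the extra power through the boundedness of $\nu^{(m+1)}$; a naive iterated summation by parts on the discrete sum $K_\nu$, rather than Poisson summation, leaves boundary terms that decay only like $N^{-2}$ and destroy the bound once $m>2$. The remaining care lies in bookkeeping the $\eta$-dependence across the two regimes---particularly the threshold $q=m$, where the remainder integral passes from convergent to logarithmically divergent and forces the $\log(1+N)$ loss---and in checking that the polynomial-moment tails are genuinely dominated by the stated $\eta^{-(n+\alpha)}$ and $\eta^{-m}$ weights rather than being of larger order.
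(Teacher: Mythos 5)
Your proposal is correct and follows exactly the strategy this paper indicates (but does not write out, deferring to \cite{colbrook2021rigorousKoop}): a near/far splitting in which the far-field term is controlled by the order-$(m+1)$ decay of $K_\nu$ --- which you correctly extract from hypotheses (i)--(iv) via Poisson summation and $m+1$ integrations by parts on $\Phi$ --- and the near-field term by Taylor-expanding $\rho$ about $\theta_0$ and killing the polynomial part with the vanishing-moment conditions coming from (ii), with the $q=m$ threshold producing the $\log(1+N)$ loss. The bookkeeping of the $\eta$-weights, the absorption of the tail terms using $N\eta\gtrsim 1$, and the reduction to $\eta\geq 1/N$ all check out, so this is a faithful reconstruction of the intended argument rather than a new route.
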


\begin{figure}[t]
\centering
\includegraphics[width=0.45\linewidth]{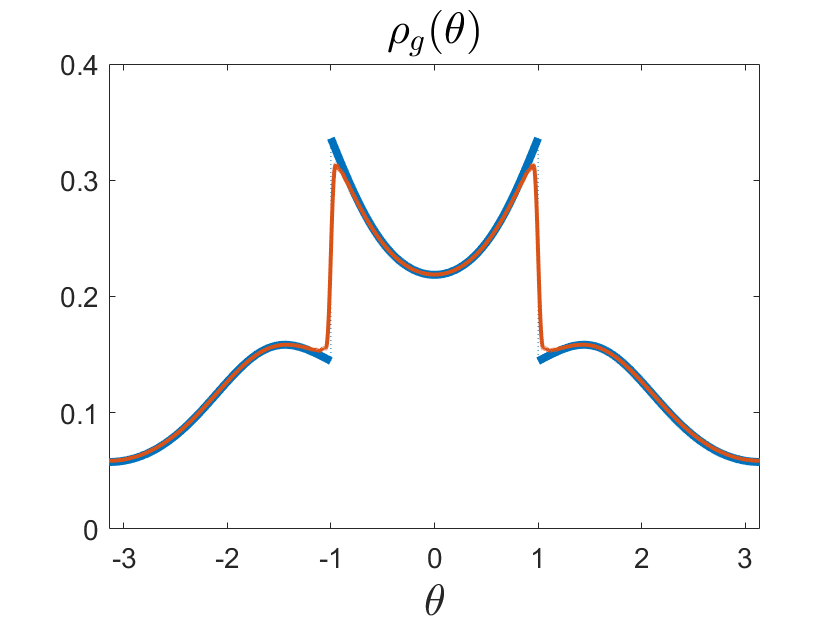}\hfill
\includegraphics[width=0.45\linewidth]{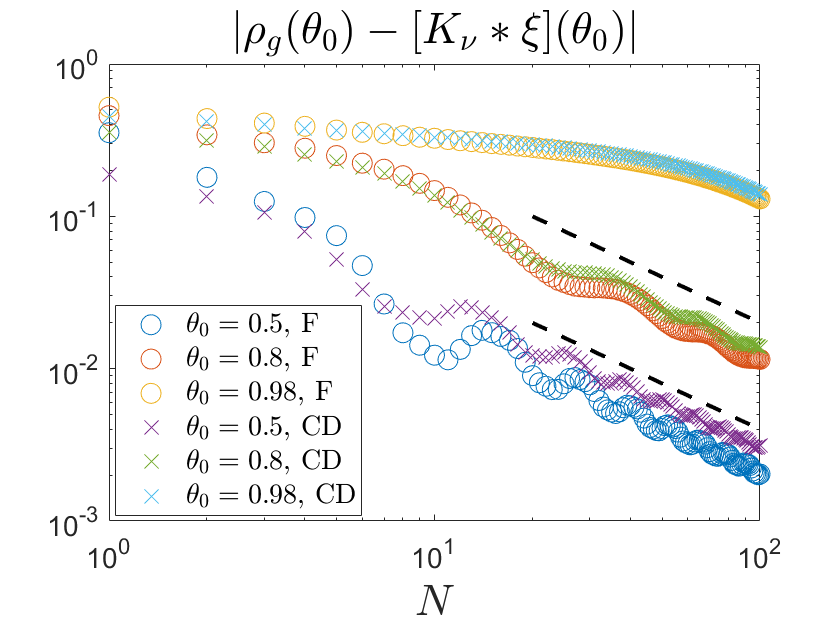}
\caption{\textit{Left:} The integrable density of the absolutely continuous measure used for moment-based numerical tests (solid blue line) is compared with the hat-filtered Fourier approximation (solid orange line). Note the classic Gibbs phenomena, which appeared in Figure~11, has been mitigated by the filter (at the expense of high-order convergence). \textit{Right:} Pointwise convergence at three points located an increasing distance from the discontinuity in the measure at $\theta= 1$, illustrating the rates (dashed black lines delineate $\mathcal{O}(1/N)$ curves as $N\rightarrow\infty$) in~\cref{thm:weak_conv_gq} for the first-order hat filter (circles). Note the growth of constants in the bounds near singular points of the density. The corresponding convergence curves obtained using the Christoffel--Darboux (CD) kernel approach in~\cite{korda2020data} (displayed with `x' markers) are nearly identical to those of the hat filter, which corresponds to the Fejér (F) smoothing kernel.}
\label{fig:filter}
\end{figure}

The Christoffel--Darboux kernel associated with the measure $\xi_g$ can also be used to construct pointwise approximations to $\rho_g$ from its moments~\cite{korda2020data}. This approach may be viewed as a smoothing kernel (with $m=1$ first-order convergence) induced by a family of $\xi_g$-orthogonal polynomials, via the variational characterization of the Christoffel--Darboux kernel~\cite{simon2008christoffel}. In fact, in the special case of the uniform measure on $\pp$, the Christoffel--Darboux method is mathematically equivalent to Fourier filtering with the Fejér kernel~\cite{simon2008christoffel}.

To illustrate the effect of filtering,~\cref{fig:filter} (left panel) compares the integrable density of the absolutely continuous measure from~\cref{fig:interp,fig:fourier} with the hat-filtered Fourier approximation. In contrast to~\cref{fig:fourier}, the filter has eliminated the Gibbs phenomena caused by the discontinuities at $\pm 1$. The trade-off is that the smoothed density converges point-wise rather slowly on the periodic interval, with the error decreasing proportionally to $1/N$. The first-order point-wise convergence of the hat-filter is demonstrated in the right panel of~\cref{fig:filter} at three points located at increasing distances from the discontinuities of $\rho_g$. The convergence curves for the Christoffel--Darboux (CD) kernel~\cite{korda2020data} are close to those of the hat filter.

\begin{figure}[t]
\centering
\includegraphics[width=0.45\linewidth]{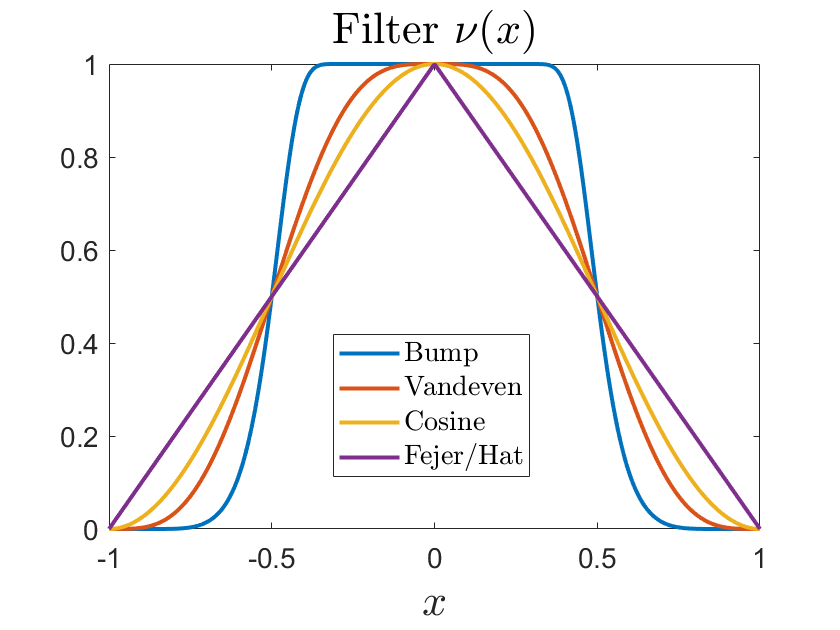}\hfill
\includegraphics[width=0.45\linewidth]{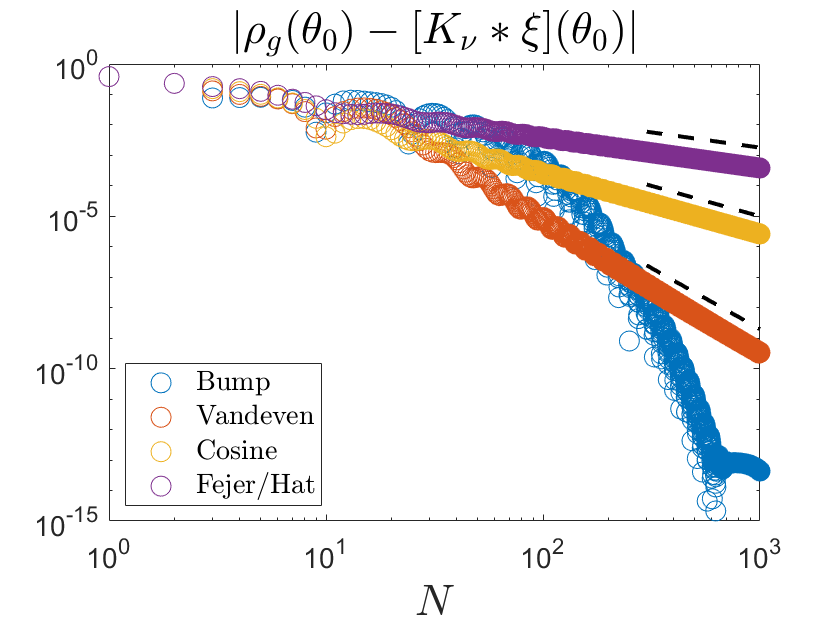}
\caption{\textit{Left:} Four filters for filtered Fourier series approximation: the first-order hat (Fejér) filter (purple), the second-order cosine filter (yellow), the fourth-order Vandeven filter (red), and the infinite-order bump filter (blue). \textit{Right:} The pointwise error in the hat filter (purple) approximaiton at $\theta = 0.6$ is compared with the pointwise error with the three higher-order filters (the color code is the same as in the left panel). Dashed lines denote, from top to bottom, rates of $\mathcal{O}(N^{-p})$ for $p=1$, $2$, and $4$ as $N\rightarrow\infty$. The bump filter converges faster than any polynomial in $1/N$ as $N\rightarrow\infty$.}
\label{fig:filter2}
\end{figure}

The left panel of~\cref{fig:filter2} shows the hat (Fejér) filter along with three filters that acheive higher-order pointwise convergence rates for smooth densities (see~\cref{thm:filter_convergence}). The second-order cosine filter (yellow) is defined as $\nu_{\rm cos}(x)=(1+\cos(\pi x))/2$~\cite{gottlieb1997gibbs} and the fourth-order Vandeven filter (red) is defined as $\nu_{\rm Vand}(x) = 1-35|x|^4 + 84|x|^5 - 70|x|^6 + 20|x|^7$~\cite{vandeven1991family}. The ``bump" filter (blue) recommended by Colbrook and Townsend~\cite{colbrook2021rigorousKoop} is a filter of infinite-order, whose convergence rate is limited only by the local smoothness of the Radon--Nikodym derivative. If the density is locally real analytic, it converges faster than any polynomial in $1/N$ as $N\rightarrow\infty$. It is defined by
$$
\nu_{\rm bump}(x) = \exp\left[-\frac{2}{1-|x|}\exp\left(-\frac{c}{|x|^4}\right)\right], \qquad\text{where}\qquad c = 0.109550455106347.
$$
The constant $c$ is chosen so that $\nu_{\rm bump}(1/2)=1/2$. The right panel of~\cref{fig:filter2} illustrates the potential advantages of higher-order filters by demonstrating their high-order pointwise approximation of the density from~\cref{fig:interp,fig:fourier} at $\theta=0.6$.

\subsection{Family II: Eigenvalue-based methods}

Another approach to approximating spectral measures is to build a finite-dimensional approximation of the Koopman operator $\mathcal{K}$ and then use its eigenvalues and eigenvectors to approximate the spectral measure of $\mathcal{K}$. A key advantage of this approach is that it does not require access to a single long trajectory of moments, which may be difficult to obtain in practice. Instead, we can use snapshot data, consisting of input-output pairs collected from trajectories of arbitrary length (short or long).

EDMD itself generally fails to produce convergent approximations of the spectral measures $\xi_g$. The root of the problem is that standard finite-dimensional Galerkin approximations of unitary operators are not necessarily unitary, and hence their eigenvalues do not, in general, lie on the unit circle. For example, consider the unitary shift operator on $\ell^2(\mathbb{Z})$, the space of square-summable sequences, defined by $\mathcal{S}(e_n) = e_{n-1}$ on the canonical basis vectors $\{e_n\}_{n=-\infty}^\infty$. The spectrum of $\mathcal{S}$ is absolutely continuous and coincides with the unit circle $\mathbb{T}$. However, the Galerkin approximation of $\mathcal{S}$ on ${\rm span}\{e_{-N},\ldots,e_N\}$ is a $(2N+1)\times(2N+1)$ Jordan block, whose spectrum is a single eigenvalue at the origin with algebraic multiplicity $2N+1$.

To fix this problem, one can use a variant of EDMD called measure-preserving EDMD (mpEDMD) \cite{colbrook2023mpedmd}, which computes the \textit{best unitary approximation} to the Koopman operator on a finite-dimensional subspace spanned by the snapshot data and dictionary of choice. \Cref{alg:mpEDMD} shows how to compute the matrix representation, $\Kv_\mathrm{mp}$, of this unitary approximation in the basis defined by the columns of the data matrix $\Psiv_X$. The matrix $\Kv_\mathrm{mp}$ is unitary on the Hilbert space $\mathbb{C}^N$ with the inner product $\langle\cdot,\cdot\rangle_{\Gv}$ induced by the Gram matrix $\Gv= \Psiv_X^*W\Psiv_X$. Assuming that the quadrature rule converges, one can show that every limit point of the matrices $\Kv_{\mathrm{mp}}$ as $M\rightarrow\infty$ corresponds to the unitary part of a polar decomposition of $\mathcal{P}_{V_{N}}\mathcal{K}\mathcal{P}_{V_{N}}^*$ (recall that $\mathcal{P}_{V_N}$ is the orthogonal projector onto the dictionary span $V_N$). Moreover, if $g=\Psiv \gv$ and $\mathcal{K}g\in V_N$, then $\lim_{M\rightarrow\infty} \Psiv\Kv_{\mathrm{mp}} \gv=\mathcal{K}g$. 

\begin{algorithm}[t]
\textbf{Input:} Snapshot data $\{(x^{(m)},y^{(m)})\}_{m=1}^M$, quadrature weights $\{w_m\}_{m=1}^{M}$, dictionary $\{\psi_j\}_{j=1}^{N}$.\\
\vspace{-4mm}
\begin{algorithmic}[1]
\STATE Compute $\Psiv_X,\Psiv_Y\in\mathbb{C}^{M\times N}$ from~\cref{eq:psidef} and $\Wv=\mathrm{diag}(w_1,\ldots,w_{M})$.
\STATE Compute an economy (pivoted) QR decomposition $\Wv^{1/2}\mathbf{\Psi}_X=\Qv\Rv\Pv^\top$.
\STATE Compute an SVD of $(\Pv\Rv^{-1})^{*}\Psiv_Y^*\Wv^{1/2}\Qv=\Uv_1\mathbf{\Sigma} \Uv_2^*$.
\STATE Compute the eigendecomposition $\Uv_2\Uv_1^*=\hat{\Vv}\mathbf{\Lambda} \hat{\Vv}^*$ (via a Schur decomposition).
\STATE Compute $\Kv_{\mathrm{mp}}=\Pv\Rv^{-1}\Uv_2\Uv_1^*\Rv\Pv^\top$ and $\Vv=\Pv\Rv^{-1}\hat{\Vv}$.
\end{algorithmic} \textbf{Output:} Koopman matrix $\Kv_{\mathrm{mp}}$, with eigenvectors $\Vv$ and eigenvalues $\mathbf{\Lambda}$.
\caption{The mpEDMD algorithm.}
\label{alg:mpEDMD}
\end{algorithm}

To approximate the projection-valued spectral measure of $\mathcal{K}$ using mpEDMD, we consider the spectral measure $\mathcal{E}_{N,M}$ of the matrix $\Kv_{\mathrm{mp}}$ on $\mathbb{C}^N$ with the inner product $\langle\cdot,\cdot\rangle_{\Gv}$:\index{projection-valued spectral measure!Koopman operators}
\begin{equation}
\label{eq:mpEDMD_proj_spec_meas}
\mathrm{d}\mathcal{E}_{N,M}(\theta)=\sum_{j=1}^{N}\mathbf{v}_j\mathbf{v}_j^*\Gv\delta(e^{i \theta}-\lambda_j)\,\mathrm{d}\theta.
\end{equation}
For scalar-valued spectral measures with respect to $g\in L^2(\mathcal{X},\omega)$, suppose that $\gv_{N,M}\in\mathbb{C}^N$ with $\lim_{N\rightarrow\infty}\lim_{M\rightarrow\infty}\Psiv \gv_{N,M}=g$. We approximate $\xi_g$ by $\smash{\xi_{\gv}^{(N,M)}}$, where\index{scalar-valued spectral measure!Koopman operators}
\begin{equation}
\label{eq:mpEDMD_scalar_spec_meas}
\mathrm{d}\xi_{\gv}^{(N,M)}(\theta)=\sum_{j=1}^N\delta(e^{i \theta}-\lambda_j)|\mathbf{v}_j^*\Gv\gv_{N,M}|^2\mathrm{d}\theta.
\end{equation}
If $\|g\|=1$, we normalise $\gv_{N,M}$ so that $\gv_{N,M}^*\Gv\gv_{N,M}=1$. Since $\{\Gv^{1/2}\mathbf{v}_j\}_{j=1}^N$ is a $\langle\cdot,\cdot\rangle_{\Gv}$ orthonormal basis for $\mathbb{C}^N$, $\smash{\xi_{\gv}^{(N,M)}}$ is a probability measure on $\pp$.

The following theorem summarizes the convergence properties of mpEDMD, based on~\cite{colbrook2023mpedmd}. The Wasserstein-1 metric is defined by
$$
W_1(\mu,\nu)=\sup_{\substack{\phi:\pp\rightarrow \mathbb{R}\\|\phi|_{\mathcal{C}^{0,1}_{\mathrm{per}}}\leq 1}}\int_{\pp} \phi(\theta) \dd (\mu-\nu)(\theta),
$$
where $\mu$ and $\nu$ are Borel probability measures on $\pp$, and that weak convergence of measures is equivalent to convergence with respect to this metric.

\begin{theorem}[Convergence properties of mpEDMD]
Suppose $\mathcal{K}$ is an isometry, $\lim_{N\rightarrow\infty}\mathrm{dist}(h,V_{N})=0$ for all $h\in L^2(\mathcal{X},\omega)$, and the quadrature rule underlying \cref{eq:LS:Upi} converges. Let $g\in L^2(\mathcal{X},\omega)$ satisfy $\|g\|=1$ and let $\gv_{N,M}\in\mathbb{C}^N$ satisfy $\gv_{N,M}^*\Gv\gv_{N,M}=1$ with $\lim_{N\rightarrow\infty}\lim_{M\rightarrow\infty}\Psiv \gv_{N,M}=g$. Then,
\begin{itemize}
	\item The scalar-valued spectral measures converge:
	$$
	\lim_{N\rightarrow\infty}\limsup_{M\rightarrow\infty}W_1(\xi_{g},\xi_{\gv}^{(N,M)})=0.
	$$
	\item If $\mathcal{K}$ is unitary, the functional calculi converge:
	$$
	\lim_{N\rightarrow\infty}\limsup_{M\rightarrow\infty}\left\|\int_{\pp} \phi(\theta)\dd\big[\mathcal{E}(\theta)g-\Psiv\mathcal{E}_{N,M}(\theta) \gv_{N,M}\big]\right\|=0,
	$$
	for every continuous function $\phi:\pp\rightarrow\mathbb{C}$.
	\item If $\{g,\mathcal{K}g,\ldots, \mathcal{K}^Lg\}\subset V_N$ for some $L\in\mathbb{N}$ and $\lim_{M\rightarrow\infty}\Psiv \gv_{N,M}=g$, then
$$
\limsup_{M\rightarrow\infty}W_1(\xi_{g},\xi_{\gv}^{(N,M)})\leq \frac{\pi}{L+1}.
$$
\end{itemize}
\end{theorem}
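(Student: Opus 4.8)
The plan is to reduce all three parts to control of finitely many \emph{trigonometric moments} and then to exploit the polar decomposition underlying \cref{alg:mpEDMD}. Two preliminaries. Writing $\Kv_{\mathrm{mp}}=\sum_{j}\lambda_j\vv_j\vv_j^*\Gv$ (so $\Kv_{\mathrm{mp}}$ is unitary, and $\Kv_{\mathrm{mp}}^{-1}=\Kv_{\mathrm{mp}}^*$, with respect to $\langle\cdot,\cdot\rangle_{\Gv}$), the $n$-th moment of $\xi_{\gv}^{(N,M)}$ in \cref{eq:mpEDMD_scalar_spec_meas} is
$$
\int_{\pp} e^{in\theta}\dd\xi_{\gv}^{(N,M)}(\theta)=\sum_{j}\lambda_j^n|\vv_j^*\Gv\gv_{N,M}|^2=\langle\Kv_{\mathrm{mp}}^n\gv_{N,M},\gv_{N,M}\rangle_{\Gv},\qquad n\in\mathbb{Z}.
$$
Moreover $\Psiv$ is a unitary from $(\mathbb{C}^N,\langle\cdot,\cdot\rangle_{\Gv})$ onto $V_N$ (in the $M\to\infty$ limit, where $\Gv$ becomes the exact Gram matrix), so the moment above equals $\langle U_{N,M}^n h_{N,M},h_{N,M}\rangle_{L^2}$ where $U_{N,M}$ and $h_{N,M}=\Psiv\gv_{N,M}$ are the operator and vector represented by $\Kv_{\mathrm{mp}}$ and $\gv_{N,M}$. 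Finally, since $\pp$ is compact, $W_1$ metrizes weak convergence of probability measures, and combining Kantorovich duality with Favard's bound $\tfrac{\pi}{2(L+1)}$ on the best degree-$L$ trigonometric approximation of a $1$-Lipschitz function yields, for all probability measures $\mu,\nu$ on $\pp$ and all $L\in\mathbb{N}$,
$$
W_1(\mu,\nu)\le\frac{\pi}{L+1}+C\sum_{|k|\le L}\Big|\textstyle\int_{\pp}e^{ik\theta}\dd\mu-\int_{\pp}e^{ik\theta}\dd\nu\Big|
$$
for an absolute constant $C$. Thus it suffices to control the first $L$ moments, for each fixed $L$.

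For the $M\to\infty$ limit (fixed $N$): along any subsequence realising the $\limsup$, pass to a further subsequence along which $\Gv$ converges to the exact Gram matrix (quadrature convergence), $\gv_{N,M}\to\gv_N$ with $\Psiv\gv_N=g_N:=\lim_M\Psiv\gv_{N,M}$ (coefficients converge because $V_N$ is finite-dimensional), and $\Kv_{\mathrm{mp}}$ converges; by the convergence fact quoted before \cref{alg:mpEDMD}, its limit represents under $\Psiv$ \emph{a} unitary part $U_N$ of a polar decomposition $A_N=U_N|A_N|$ of $A_N:=\mathcal{P}_{V_N}\Koop\mathcal{P}_{V_N}^*$. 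Then $\xi_{\gv}^{(N,M)}$ converges weakly, along this subsequence, to the spectral measure $\nu_N$ of $U_N$ with respect to $g_N$, whose moments are $\langle U_N^n g_N,g_N\rangle$. Using $\limsup_M$ throughout lets us tolerate non-uniqueness of the polar factor, as the estimates below hold for every unitary part. The three parts now reduce to: (I) $\langle U_N^n g_N,g_N\rangle\to c_n$ as $N\to\infty$ for each fixed $n$; (II) $\Psiv\,\phi(\Kv_{\mathrm{mp}})\,\gv_{N,M}\to\phi(\Koop)g$ in $L^2$ for every continuous $\phi$ when $\Koop$ is unitary; (III) $\langle U_N^k g,g\rangle=c_k$ for $|k|\le L$ whenever $g,\Koop g,\dots,\Koop^L g\in V_N$ and $g_N=g$.

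The engine for (I) is an operator-norm estimate that uses the isometry hypothesis. Since $\|A_N\|\le\|\Koop\|=1$ we have $0\le|A_N|\le I$, hence $(I-|A_N|)^2\le I-|A_N|$; and for $v\in V_N$,
$$
\|(I-|A_N|)v\|^2\le\langle(I-|A_N|)v,v\rangle=\|v\|^2-\langle|A_N|v,v\rangle\le\|v\|^2-\||A_N|v\|^2=\|v\|^2-\|A_N v\|^2=\|(I-\mathcal{P}_{V_N})\Koop v\|^2.
$$
With $A_N=U_N|A_N|$ and $\|U_N\|\le 1$ this gives the crucial bound $\|U_N v-\Koop v\|\le 2\|(I-\mathcal{P}_{V_N})\Koop v\|$ for $v\in V_N$. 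Since $\mathcal{P}_{V_N}\to I$ strongly (density hypothesis), the right-hand side vanishes along any norm-bounded sequence $v_N\in V_N$ with $\Koop v_N$ convergent. Starting from $v_0^{(N)}=g_N\to g$ and iterating $v_{j+1}^{(N)}=U_N v_j^{(N)}$, one shows by induction that $v_j^{(N)}\to\Koop^j g$; telescoping $U_N^n g_N-\Koop^n g_N=\sum_{j=0}^{n-1}\Koop^{\,n-1-j}(U_N-\Koop)v_j^{(N)}$ and using $\|\Koop^m v\|=\|v\|$ then gives $\|U_N^n g_N-\Koop^n g\|\to 0$, hence (I) (with $c_{-n}=\overline{c_n}$ handling negative $n$), and the quantitative $W_1$ inequality yields the first bullet. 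For (III): if $\Koop^j g,\Koop^{j+1}g\in V_N$ then $|A_N|^2\Koop^j g=\mathcal{P}_{V_N}\Koop^*\mathcal{P}_{V_N}\Koop(\Koop^j g)=\mathcal{P}_{V_N}\Koop^*\Koop^{j+1}g=\Koop^j g$, so $|A_N|\Koop^j g=\Koop^j g$ and $U_N\Koop^j g=U_N|A_N|\Koop^j g=A_N\Koop^j g=\Koop^{j+1}g$; inductively $U_N^k g=\Koop^k g$ for $0\le k\le L$, so every weak limit point $\nu_N$ of $\xi_{\gv}^{(N,M)}$ has moments $c_k$ for $|k|\le L$, and the $W_1$ inequality gives $\limsup_M W_1(\xi_g,\xi_{\gv}^{(N,M)})\le\pi/(L+1)$. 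For (II), first check $\Psiv\Kv_{\mathrm{mp}}^k\gv_{N,M}\to\Koop^k g$ for every $k\in\mathbb{Z}$: for $k\ge 0$ this is the argument of (I); for $k<0$ note that $U_N^*$ is a unitary part of the polar decomposition of $A_N^*=\mathcal{P}_{V_N}\Koop^*\mathcal{P}_{V_N}^*$, the finite section of the unitary (hence isometric) $\Koop^*$, so the same estimate applies with $\Koop^*$ in place of $\Koop$. Extend to Laurent polynomials by linearity, and to general $\phi\in C(\pp)$ by density together with $\|\phi(\Kv_{\mathrm{mp}})\|_{\Gv\to\Gv}\le\|\phi\|_{\pp}$ (normality of $\Kv_{\mathrm{mp}}$) and $\|\phi(\Koop)\|\le\|\phi\|_{\pp}$, using that $\Psiv$ is a $\langle\cdot,\cdot\rangle_{\Gv}$-isometry onto $V_N$.

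The main obstacle is the $N\to\infty$ step in (I): establishing that the polar unitary part $U_N$ converges strongly to $\Koop$ (and $U_N^*$ to $\Koop^*$). This is precisely where the isometry hypothesis bites—the identity $A_N^*A_N=\mathcal{P}_{V_N}\Koop^*\mathcal{P}_{V_N}\Koop\mathcal{P}_{V_N}^*$ forces $|A_N|\to I$ strongly via $\Koop^*\Koop=I$, which fails for the plain Galerkin matrix $A_N$ and is exactly why ordinary EDMD produces divergent spectral measures. The remaining $M\to\infty$ bookkeeping (continuity of the QR/SVD/polar steps of \cref{alg:mpEDMD}, extraction of a convergent subsequence whose limit is a genuine polar unitary part, and weak convergence of $\xi_{\gv}^{(N,M)}$) is routine given the quoted convergence result, and is where the $\limsup_M$ in the statement earns its keep; the approximation-theoretic inputs (Favard's constant, density of Laurent polynomials) are standard.
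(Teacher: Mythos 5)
Your proof is correct and follows essentially the same route that the paper indicates for this theorem (which it does not prove in the text but attributes to the mpEDMD paper \cite{colbrook2023mpedmd}): reduce $W_1$ to finitely many trigonometric moments via Kantorovich duality and Favard's bound $\pi/(2(L+1))$, and control the moments through the polar decomposition of $\mathcal{P}_{V_N}\Koop\mathcal{P}_{V_N}^*$. In particular, your key inequality $\|(I-|A_N|)v\|^2\leq\|v\|^2-\|A_Nv\|^2=\|(I-\mathcal{P}_{V_N})\Koop v\|^2$, which is where the isometry hypothesis enters, together with the telescoping induction and the exact moment-matching argument for the invariant-subspace case, is precisely the mechanism of the original proof.
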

Remarkably, the final part of the theorem provides an explicit error bound for dictionaries with delay embeddings. The proof is based on moment-matching properties of mpEDMD and is similar to the proofs of~\cref{thm:weak_conv_gq,thm:weak_conv_fs}. Consequently, the spectral measures of mpEDMD matrices also achieve high-order convergence rates.

In the continuous-time setting, Giannakis and Valva have developed spectrally consistent finite-dimensional approximations for skew-symmetric continuous-time Koopman generators. The spectral measures of these approximations converge to those of the Koopman generator in the large-data and large-dictionary limits~\cite{valva2024physics}.

\subsection{Family III: Resolvent-based methods}

The third approach to approximating spectral measures uses the resolvent of the Koopman operator to construct carefully smoothed approximations of $\xi_g$~\cite{colbrook2021rigorousKoop,colbrook2024rigged}. These methods apply to general snapshot data and are particularly robust to noise due to principled regularization. They also achieve local high-order accuracy for the Radon--Nikodym derivative wherever it is smooth.

To illustrate, consider the \textit{Carathéodory function} associated with $\xi_g$, defined by
\begin{equation}
\label{eq:carath_def}
F_{\xi_g}(z)=\int_{\pp}\frac{e^{i\varphi}+z}{e^{i\varphi}-z}\dd\xi_{g}(\varphi)=\langle(\mathcal{K}+zI)(\mathcal{K}-zI)^{-1}g,g\rangle,\quad |z|\neq 1.
\end{equation}
The second equality follows from the Borel functional calculus for unitary operators applied to $\mathcal{K}$.
Letting $z=re^{i\theta}$ with $r=1/(1+\epsilon)<1$ one obtains
\begin{equation}
\label{eq:carath_poisson}
\frac{1}{4\pi}\left[F_{\xi_g}(re^{i\theta})-F_{\xi_g}(r^{-1}e^{i\theta})\right]=\frac{1}{2\pi}\int_{\pp}\frac{(1-r^2)\dd\xi_{g}(\varphi)}{1+r^2-2r\cos(\theta-\varphi)}=[K^{\rm (P)}_\epsilon*\xi_g](\theta),
\end{equation}
where the right-hand side is the convolution of $\xi_g$ with the Poisson kernel $K^{\rm (P)}$ for the unit disk~\cite{katznelson2004introduction}. In other words, one can compute a smoothed data-driven approximation of $\xi_g$ by approximating the Koopman operator and its resolvent in~\cref{eq:carath_def} from snapshot data. This can be achieved using the mpEDMD matrix introduced in the previous section, with convergence to $F_{\xi_g}(z)$ for any $z\not\in\mathbb{T}$~\cite{colbrook2024rigged}. To approximate $\xi_g$ accurately this way, one typically needs to take $r$ very close to $1$, which requires evaluating the resolvent of $\mathcal{K}$ near its spectrum. Accurate computation of the resolvent near the continuous spectrum generally demands large dictionaries, while larger dictionaries typically require larger amounts of data for reliable quadrature.

To design data-efficient resolvent-based schemes, one can replace the Poisson kernel with higher-order smoothing kernels. The idea is to construct periodic rational kernels that reproduce as many moments of a Dirac delta distribution as possible. High-order smoothing kernels in the Koopman setting were introduced in~\cite{colbrook2021rigorousKoop}, with connections to Fourier filters. For a streamlined presentation of the theory via Carathéodory functions, see ~\cite{colbrook2024rigged}. See also~\cite{colbrook2021computing,colbrook2025computing} for related applications of high-order rational smoothing kernels in spectral computations for self-adjoint operators.

To demonstrate the construction, consider a periodic kernel of the form
\begin{equation}\label{eq:rat_kern_R}
K_\epsilon^{\mathbb{T}}(\theta)=\frac{1}{\epsilon}\sum_{n\in\mathbb{Z}} K\left(\frac{\theta+2\pi n}{\epsilon}\right), \quad\text{where}\quad
K(x)=\frac{1}{2\pi i}\sum_{j=1}^{m}\left(\frac{\alpha_j}{x-a_j}-\frac{\beta_j}{x-b_j}\right),
\end{equation}
Now, select the poles of $K(x)$ to be equally spaced poles in $[-1,1]$, with
\begin{equation}\label{eq:equi_poles}
a_j=\frac{2j}{m+1}-1+i, \quad b_j=\overline{a_j}, \quad 1\leq j\leq m,
\end{equation}
and solve the following Vandermonde equations for the residues $\alpha_j = \overline{\beta_j}$,
\begin{equation}\label{eq:vandermonde_condition}
\begin{pmatrix}
1 & \dots & 1 \\
a_1 & \dots & a_{m} \\
\vdots & \ddots & \vdots \\
a_1^{m-1} &  \dots & a_{m}^{m-1}
\end{pmatrix}
\begin{pmatrix}
\alpha_1 \\ \alpha_2\\ \vdots \\ \alpha_{m}
\end{pmatrix}\!
=
\begin{pmatrix}
1 & \dots & 1 \\
b_1 & \dots & b_{m} \\
\vdots & \ddots & \vdots \\
b_1^{m-1} &  \dots & b_{m}^{m-1}
\end{pmatrix}
\begin{pmatrix}
\beta_1 \\ \beta_2\\ \vdots \\ \beta_{m}
\end{pmatrix}
=\begin{pmatrix}
1 \\ 0 \\ \vdots \\0
\end{pmatrix}.
\end{equation}
The Vandermonde condition on the residues ensures that the moments of the kernel match those of the Dirac delta up to order $m$. The rational form of the kernel then allows us to apply the Borel functional calculus for unitary operators, yielding an analogue of~\cref{eq:carath_poisson} that links the smoothed measure to Carathéodory functions:
\begin{equation}\label{eqn:mth_order_stone}
\left[K_\epsilon^{\mathbb{T}}*\xi_g\right](\theta)=\frac{-1}{2\pi}\sum_{j=1}^{m}{\rm Re}\left(\alpha_j \left\langle (\mathcal{K}-e^{i\theta-i\epsilon a_j}I)^{-1}g,(\mathcal{K}+e^{i\theta-i\epsilon a_j}I)^*g\right\rangle  \right).
\end{equation}
The resulting smoothed approximation converges rapidly as $\epsilon \downarrow 0$, both in the weak sense and locally pointwise wherever $\xi_g$ is absolutely continuous with a sufficiently regular Radon--Nikodym derivative~\cite{colbrook2021rigorousKoop}. 

\begin{theorem}[Convergence of smoothed measures]
\label{thm:unitary_weak_convergence}
Let $K_{\epsilon}^\mathbb{T}$ be the kernel defined in~\cref{eq:rat_kern_R,eq:equi_poles,eq:vandermonde_condition} and let $\xi$ be a Borel measure on $\pp$ with total variation $\|\xi\|$.
\begin{itemize}

\item If $\phi\in\mathcal{C}^{n,\alpha}(\pp)$, then
$$
\!\!\!\!\!\!\left|\int_{\pp}\!\!\!\!\!\!\!\!\!\!\phi(\theta)\left[\dd\xi(\theta)  - [K_\epsilon^\mathbb{T}*\xi](\theta)\dd\theta\right]\right|\lesssim \begin{cases}
\|\phi\|_{\mathcal{C}^{n,\alpha}(\pp)}\epsilon^{n+\alpha}, &\text{if }n+\alpha<m,\\
\|\phi\|_{\mathcal{C}^{m}(\pp)}\epsilon^{m}\log(1+\epsilon^{-1}), &\text{if }n+\alpha\geq m.
\end{cases}
$$

\item If for some $\theta_0\in\pp$ and $\eta\in(0,\pi)$, $\xi$ is absolutely continuous on the closed interval $\mathcal{I}=[\theta_0-\eta,\theta_0+\eta]$ with Radon--Nikodym derivative $\rho\in\mathcal{C}^{n,\alpha}(\mathcal{I})$, then
$$\!\!\!\!\!\!
\left|\rho(\theta_0){-}[K_\epsilon^\mathbb{T}*\xi](\theta_0)\right|\lesssim
\frac{\epsilon^m\|\xi\|}{(\epsilon+\eta)^{m+1}}{+}\begin{cases}
\|\rho\|_{\mathcal{C}^{n,\alpha}(\mathcal{I})}(1{+}\eta^{-(n+\alpha)})\epsilon^{n+\alpha},\quad\!\!\!\!\!\!\!\!&\text{if }n+\alpha<m,\\
\|\rho\|_{\mathcal{C}^{m}(\mathcal{I})}(1{+}\eta^{-m})\epsilon^{m}\log(1{+}\epsilon^{-1}),\quad\!\!\!\!\!\!\!\! &\text{if }n+\alpha\geq m.
\end{cases}
$$
\end{itemize}
\end{theorem}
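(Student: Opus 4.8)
The plan is to treat \cref{thm:unitary_weak_convergence} as the purely real-analytic statement it is: an estimate for the convolution of an arbitrary finite Borel measure $\xi$ on $\pp$ against the periodic rational kernel $K_\epsilon^{\mathbb{T}}$ of~\eqref{eq:rat_kern_R}--\eqref{eq:vandermonde_condition}. The Carath\'eodory-function identity~\eqref{eqn:mth_order_stone} is not used here; it is what makes $K_\epsilon^{\mathbb{T}}*\xi_g$ computable from the Koopman resolvent, but it plays no role in the bound. The whole argument rests on two elementary facts about the mother kernel $K$: (i) $|K(x)|\lesssim(1+|x|)^{-m-1}$ on $\mathbb{R}$, and (ii) $\int_{\mathbb{R}}x^jK(x)\dd x=\delta_{j0}$ for $0\le j\le m-1$. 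Fact (ii) is the direct translation of the Vandermonde conditions~\eqref{eq:vandermonde_condition} via a short contour-integral computation (the principal parts of the $a_j$- and $b_j$-groups cancel precisely because of the lower-order conditions), and fact (i) follows from the partial-fraction form of $K$ together with the same conditions, which force the coefficients of $x^{-1},\dots,x^{-m}$ in the large-$x$ expansion of $K$ to vanish; here $b_j=\overline{a_j}$, $\beta_j=\overline{\alpha_j}$ make $K$ real and the pole symmetry of~\eqref{eq:equi_poles} makes $K$ even (convenient, not essential). Consequences I would record once and for all: $\int_{\mathbb{R}}|K(y)||y|^\beta\dd y<\infty$ iff $\beta<m$; summing (i) over the periodization index gives the uniform envelope $|K_\epsilon^{\mathbb{T}}(t)|\lesssim\epsilon^m(\epsilon+|t|_{\mathrm{per}})^{-m-1}$ for $|t|_{\mathrm{per}}\le\pi$ and the tail bound $\tfrac1\epsilon\sum_{n\neq0}|K((t+2\pi n)/\epsilon)|\lesssim\epsilon^m$; and $\|K_\epsilon^{\mathbb{T}}\|_{L^1(\pp)}\lesssim1$ with $\int_\pp K_\epsilon^{\mathbb{T}}=1$ (unfold against the constant, use (ii) with $j=0$).

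For the first bullet, Fubini's theorem turns the error into $\int_\pp(K_\epsilon^{\mathbb{T}}*\phi-\phi)\dd\xi$, bounded by $\|\xi\|\,\|K_\epsilon^{\mathbb{T}}*\phi-\phi\|_{L^\infty(\pp)}$. Unfolding the periodization against the periodic $\phi$ writes $(K_\epsilon^{\mathbb{T}}*\phi-\phi)(\varphi)=\int_{\mathbb{R}}\tfrac1\epsilon K(t/\epsilon)[\phi(\varphi-t)-\phi(\varphi)]\dd t$; Taylor-expanding $\phi$ about $\varphi$ to order $\min(n,m-1)$ and using (ii) to annihilate every polynomial term of degree $1,\dots,m-1$ leaves only the Taylor remainder against the kernel. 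If $n+\alpha<m$, the remainder is $\mathcal{O}(\|\phi\|_{\mathcal{C}^{n,\alpha}}|t|^{n+\alpha})$, $\int|K||y|^{n+\alpha}\dd y<\infty$, and scaling gives $\mathcal{O}(\epsilon^{n+\alpha})$. If $n+\alpha\ge m$, expand only to order $m-1$: the remainder is $\mathcal{O}(\|\phi\|_{\mathcal{C}^m}|t|^m)$ on $|t|\le1$ and $\mathcal{O}_\phi(|t|^{m-1})$ on $|t|>1$, and splitting the $t$-integral at $|t|=\epsilon$ and $|t|=1$ yields $\mathcal{O}(\epsilon^m\log(1+\epsilon^{-1}))$ from the logarithmic growth of $\int_{|y|\le1/\epsilon}|y|^m|K(y)|\dd y$. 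This mirrors the Fourier-coefficient arguments in \cref{thm:weak_conv_gq,thm:weak_conv_fs} and the structure of \cref{thm:filter_convergence}, with kernel decay (i) in place of Fourier-coefficient decay.

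For the second bullet I would split $[K_\epsilon^{\mathbb{T}}*\xi](\theta_0)$ over $\mathcal{I}$ and $\pp\setminus\mathcal{I}$. On $\pp\setminus\mathcal{I}$, the envelope with $|\theta_0-\varphi|_{\mathrm{per}}\ge\eta$ gives $\big|\int_{\pp\setminus\mathcal{I}}K_\epsilon^{\mathbb{T}}(\theta_0-\varphi)\dd\xi(\varphi)\big|\lesssim\epsilon^m\|\xi\|(\epsilon+\eta)^{-m-1}$, the first term of the claimed bound and the only place the (possibly singular) part of $\xi$ off $\mathcal{I}$ enters. On $\mathcal{I}$, where $\dd\xi=\rho\,\dd\varphi$, write $\int_{\mathcal{I}}K_\epsilon^{\mathbb{T}}(\theta_0-\varphi)\rho(\varphi)\dd\varphi-\rho(\theta_0)=\int_{-\eta}^{\eta}K_\epsilon^{\mathbb{T}}(t)[\rho(\theta_0-t)-\rho(\theta_0)]\dd t+\rho(\theta_0)\big(\int_{-\eta}^{\eta}K_\epsilon^{\mathbb{T}}-1\big)$. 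The cross term equals $-\rho(\theta_0)\int_{\pp\setminus\mathcal{I}}K_\epsilon^{\mathbb{T}}$, hence is $\lesssim\|\rho\|_{\mathcal{C}^{n,\alpha}(\mathcal{I})}\epsilon^m(\epsilon+\eta)^{-m}$, absorbed into the second advertised term. The first term is the localized version of the first-bullet estimate: replace $K_\epsilon^{\mathbb{T}}$ by $\tfrac1\epsilon K(\cdot/\epsilon)$ up to an $\mathcal{O}(\epsilon^m)$ tail, Taylor-expand $\rho$ about $\theta_0$, and note that for $1\le j\le\min(n,m-1)$ one has $\int_{-\eta}^{\eta}\tfrac1\epsilon K(t/\epsilon)t^j\dd t=-\epsilon^j\int_{|y|>\eta/\epsilon}K(y)y^j\dd y=\mathcal{O}(\epsilon^m\eta^{j-m})$ by (ii) and (i); estimating the Taylor remainder by the same $|t|=\epsilon$ splitting over $[-\eta,\eta]$ produces the $\epsilon^{n+\alpha}$, respectively $\epsilon^m\log(1+\epsilon^{-1})$, term.

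The hard part is not any individual estimate but bookkeeping the two scales $\epsilon$ and $\eta$: one must check that the periodization tail, the cross term, and the $\mathcal{O}(\epsilon^m\eta^{j-m})$ moment-truncation errors are all dominated by the two terms in the stated bound. When $\epsilon\le\eta$ this is elementary, e.g.\ $\epsilon^m\eta^{j-m}\lesssim\epsilon^m(\epsilon+\eta)^{-m-1}$ for $1\le j\le m-1$ (using $\eta\le\pi$) and $(\epsilon/(\epsilon+\eta))^m\le(\epsilon/(\epsilon+\eta))^{n+\alpha}$ when $n+\alpha\le m$. When $\epsilon>\eta$ one discards the Taylor/moment machinery on the near integral in favour of the crude bound $\big|\int_{-\eta}^{\eta}K_\epsilon^{\mathbb{T}}(t)[\rho(\theta_0-t)-\rho(\theta_0)]\dd t\big|\lesssim\|\rho\|_{\mathcal{C}^{n,\alpha}(\mathcal{I})}\,\eta^{\min(1,n+\alpha)}\|K\|_{L^1}\lesssim\|\rho\|_{\mathcal{C}^{n,\alpha}(\mathcal{I})}$, which is still dominated by $\|\rho\|_{\mathcal{C}^{n,\alpha}(\mathcal{I})}(1+\eta^{-(n+\alpha)})\epsilon^{n+\alpha}$ because $\eta<\epsilon$ forces $\eta^{-(n+\alpha)}>\epsilon^{-(n+\alpha)}$. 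Everything else is routine calculus, essentially the same as in the proof of \cref{thm:filter_convergence}.
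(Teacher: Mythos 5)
Your proposal is correct and follows essentially the same route as the paper's argument (which the paper only sketches, deferring details to \cite{colbrook2021rigorousKoop,colbrook2024rigged}): the Vandermonde conditions give the delta-moment-matching and the $(1+|x|)^{-m-1}$ decay of $K$, the weak bound follows from Fubini plus a Taylor expansion of $\phi$ with a H\"older-remainder estimate, and the pointwise bound from a near/far splitting at scale $\eta$ plus a Taylor expansion of $\rho$ about $\theta_0$. Your two-scale bookkeeping in $\epsilon$ and $\eta$, including the crude $\epsilon>\eta$ fallback, checks out.
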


The proof of weak convergence is based on a Taylor expansion of $\phi$ together with the delta-moment-matching conditions for the $m$th-order kernel $K_\epsilon^\mathbb{T}$, combined with a rigorous estimate for the Taylor remainder of functions with H\"older-continuous derivatives. Analogously, the proof of pointwise convergence is based on a Taylor expansion of the density $\rho$ about the point $\theta_0$~\cite{colbrook2021rigorousKoop,colbrook2024rigged}.

\begin{figure}[t]
\centering
\includegraphics[width=0.45\linewidth]{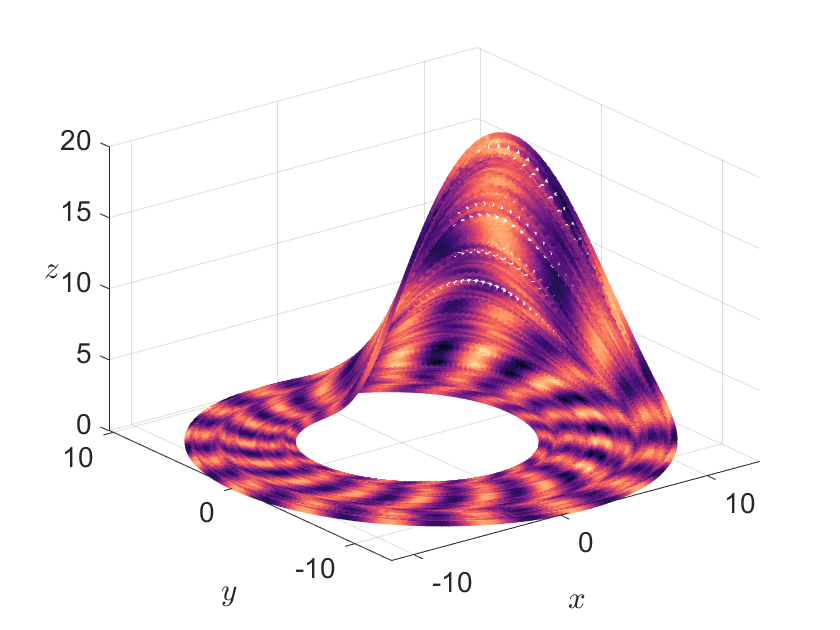}\hfill
\includegraphics[width=0.45\linewidth]{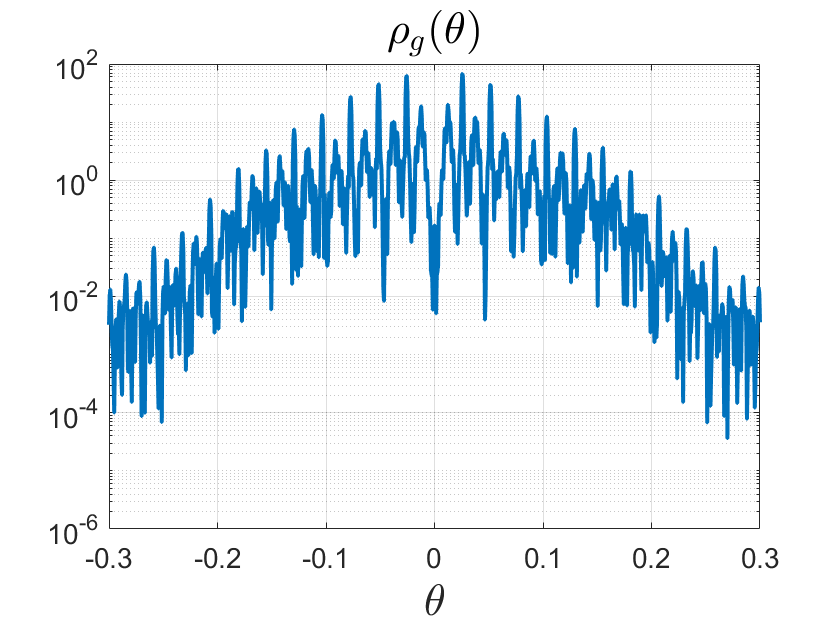}
\caption{\textit{Left:} A visualization of the so-called ``simple" Rössler attractor, which exhibits a type of non-mixing chaos known as sharp phase coherence: while the attractor is chaotic, some trajectories in the attractor are approximately periodic~\cite{farmer1980power}. This leads to sharp spikes in the power spectral density of the $z$-variable embedded in a continuous background of broader peaks. \textit{Right:} A smoothed power spectral density of the $z$-variable is plotted on a log scale, corresponding to a smoothed spectral measure of the Koopman operator computed via mpEDMD and a $6$th-order smoothing kernel. Notable features associated with sharp phase coherence are the sharp peaks at regular intervals and the approximate vanishing of the power spectral density at $\theta=0$.}
\label{fig:rossler_simple}
\end{figure}

\subsubsection{Example: Rössler system}

To illustrate the resolvent-based approach in practice, consider the Rössler system of differential equations:
$$
\dot x = -(y+z), \quad \dot y = x+ay, \quad \dot z = b + xz - cz,
$$
where $a,b,c\in\mathbb{R}$ are real parameters. When $a=0.15$, $b=0.4$, and $c=8.5$, the so-called ``simple" Rössler attractor (see~\cref{fig:rossler_simple}) exhibits a chaotic phenomenon known as sharp phase coherence. The attractor is chaotic, but some trajectories in the attractor remain approximately periodic~\cite{farmer1980power}, which produces a power spectral density consisting of sharp spikes embedded in a continuous background of broader, shorter peaks. When the $a$ parameter is increased to $a=0.3$, the attractor passes through a topological bifurcation to the ``funnel" Rössler attractor (see~\cref{fig:rossler_funnel}) and loses the sharp phase coherence. Consequently, the sharp spikes in the power spectral density broaden and blend into the continuous background. A smoothed power spectral density of the $z$-variable for the ``simple" Rössler attractor, corresponding to a Koopman spectral measure $\xi_g$ with observable $g(x,y,z)=z$, is plotted on a log scale in the right panel of~\cref{fig:rossler_simple}. The smoothed measure was computed via mpEDMD and a $6$th-order smoothing kernel. \Cref{fig:rossler_funnel} shows the power spectral density for the ``funnel" Rössler attractor.

\begin{figure}[t]
\centering
\includegraphics[width=0.45\linewidth]{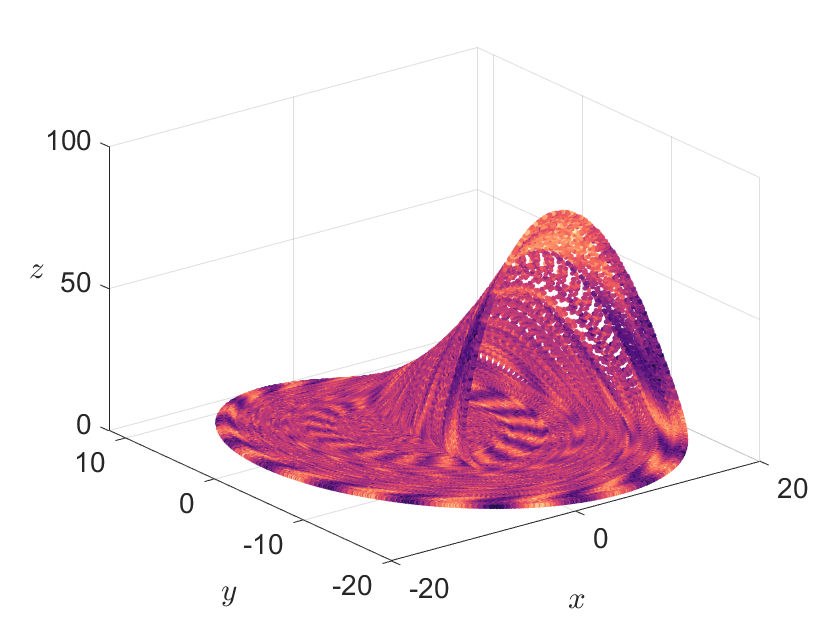}\hfill
\includegraphics[width=0.45\linewidth]{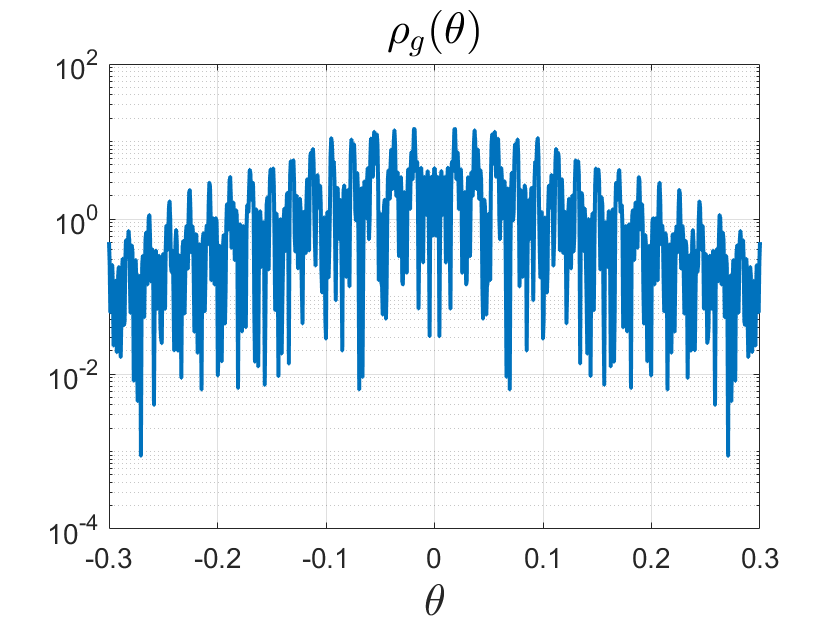}
\caption{\textit{Left:} A visualization of the so-called ``funnel" Rössler attractor, which loses phase coherence during a topological bifurcation in the Rössler attracter~\cite{farmer1980power}. \textit{Right:} A smoothed power spectral density of the $z$-variable is plotted on a log scale, corresponding to a smoothed spectral measure of the Koopman operator computed via mpEDMD and a $6$th-order smoothing kernel. In contrast to Figure~12, the spikes in the power spectral density of the $z$-variable have decreased an order of magnitude in amplitude and broadened, joining the continuous peaks previously in the background, indicating that the strong signals from the approximately periodic trajectories present in the ``simple" Rössler attractor have attenuated. The spectral density does not vanish at the origin.}
\label{fig:rossler_funnel}
\end{figure}

\subsection{Spectral analysis beyond measures}

Most of the algorithms and analysis in this section extend naturally to the action of the projection-valued measure $\mathcal{E}$ on observables in $L^2(\mathcal{X},\omega)$. This is useful for constructing approximate coherent modes and for building approximations of the Koopman operator and dynamics on restricted invariant subspaces associated with selected portions of the unit circle~\cite{korda2020data}.

Among data-driven methods for nonlinear dynamical systems, a distinctive feature of Koopman operator theory is the rich connection between Koopman eigenfunctions and the geometric structure of the state space. While unitary Koopman operators with continuous spectrum may lack traditional ``normalizable" modes in $L^2(\mathcal{X},\omega)$, one can nevertheless identify and numerically approximate a complete family of \textit{generalized eigenfunctions} by working within rigged Hilbert spaces~\cite{colbrook2024rigged}.

Another fascinating and active area of research concerns the \textit{resonances} of Koopman and transfer operators. These resonances typically appear as isolated points inside the unit disk, arising as poles of meromorphic extensions of the resolvent, and have been studied rigorously in the context of Fredholm--Riesz operators~\cite{bandtlow1997resonances}. They are closely connected to the decay rates of correlation functions in chaotic systems, known as Pollicott--Ruelle resonances~\cite{pollicott1985rate,ruelle1986resonances}. Recent work by Bandtlow, Just, and Slipantschuk~\cite{slipantschuk2020dynamic,bandtlow2023edmd}, as well as Wormell~\cite{wormell2023orthogonal}, has shown that certain EDMD eigenvalues may be associated to these resonances in the large-subspace limit.

\section{Towards a classification theory}

A careful reader will note that some of the algorithms we have discussed require taking several parameters successively to infinity, such as $M \to \infty$ (large data) followed by $N \to \infty$ (large subspace). These limits do not, in general, commute, and it may be impossible to reformulate the problem with fewer limits or design an alternative algorithm that avoids them. This is a generic feature of infinite-dimensional spectral problems~\cite{colbrook2020PhD,colbrookBook} and has given rise to the \textit{Solvability Complexity Index} (SCI)~\cite{ben2015can,colbrook4,colbrook3}.

From the SCI perspective, problems are classified according to the minimal number of successive limits needed for their resolution. For example, some spectral quantities of Koopman operators require two successive limits, while others provably belong to higher levels of the SCI hierarchy. Determining these classifications provides a rigorous framework for understanding which spectral computations are fundamentally more difficult than others.

We do not go into detail here, but there remain many open questions about the foundations of computing spectral properties of Koopman operators. In particular, establishing lower bounds on the number of successive limits required is an active area of research. Recent results have addressed this problem in $L^2$ spaces~\cite{colbrook2024limits}, $L^p$ spaces~\cite{sorg2025solvability}, and reproducing kernel Hilbert spaces of observables~\cite{boulle2025convergent}.

\bibliographystyle{plain}
\footnotesize
\renewcommand{\baselinestretch}{0.9}
\bibliography{biblios.bib}
\end{document}